\definecolor{webgreen}{rgb}{0,.5,0}
\definecolor{webbrown}{rgb}{.6,0,0}
\begin{document}

\begin{center}
\end{center}

\theoremstyle{plain}
\newtheorem{theorem}{Theorem}
\newtheorem{corollary}[theorem]{Corollary}
\newtheorem{lemma}[theorem]{Lemma}
\newtheorem{proposition}[theorem]{Proposition}

\theoremstyle{definition}
\newtheorem{definition}[theorem]{Definition}
\newtheorem{example}[theorem]{Example}
\newtheorem{conjecture}[theorem]{Conjecture}

\theoremstyle{remark}
\newtheorem{remark}[theorem]{Remark}

\newcommand{\Q}{{\mathbb Q}}

\def\modd#1 #2{#1\ \mbox{\rm (mod}\ #2\mbox{\rm )}}

\begin{center}
\vskip 1cm{\LARGE\bf Explicit Asymptotics for Signed Binomial Sums and Applications to Carnevale-Voll Conjecture}
\vskip 1cm
\large
Laurent Habsieger \\
Universit\'e de Lyon, CNRS UMR 5208 \\
Universit\' e Claude Bernard Lyon 1, Institut Camille Jordan\\
43 boulevard du 11 novembre 1918 \\
69622 Villeurbanne Cedex
France \\
\href{mailto:habsieger@math.univ-lyon1.fr}{\tt  habsieger@math.univ-lyon1.fr}\\
\end{center}

\vskip .2 in

\begin{abstract}
Carnevale and Voll conjectured that $\sum_j (-1)^j{\lambda_1\choose j}{\lambda_2\choose j}\neq0$ when $\lambda_1$ and $\lambda_2$ are two distinct integers. 
We check the conjecture when either $\lambda_2$ or $\lambda_1-\lambda_2$ is small.
We investigate the asymptotic behaviour of their sum when the ratio $r:=\lambda_1/\lambda_2$ is fixed and $\lambda_2$ goes to infinity.
We find an explicit range $r\ge 5.8362$ on which the conjecture is true.
We show that the conjecture is almost surely true for any fixed $r$. 
For $r$ close to $1$, we give several explicit intervals on which the conjecture is also true.
\end{abstract}

\section{Introduction}

Carnevale and Voll \cite{C-V}  studied Dirichlet series enumerating orbits of Cartesian products of maps whose orbits distributions are modelled on the distributions of finite index subgroups of free abelian groups of finite ranks. For Cartesian products of more than three maps they establish a natural boundary for meromorphic continuation. For products of two maps, they formulate two combinatorial conjectures that prove the existence of such a natural boundary. These conjectures state that some explicit polynomials have no unitary factors, i.e., polynomial factors that,
for a suitable choice of variables, are univariate and have all their zeroes on the unit circle. The polynomials related to their Conjecture A \cite{C-V} are given by:
$$C_{\lambda_1,\lambda_2}(x,1)=\sum_{j=0}^{\lambda_2} {\lambda_1\choose j}{\lambda_2\choose j} x^j$$
for positive integers $\lambda_1\ge \lambda_2$, and the conjectured property is the following.

\begin{conjecture}\label{conj}
 Let $\lambda_1> \lambda_2$ be two positive integers. Then $C_{\lambda_1,\lambda_2}(-1,1)\neq0$.
\end{conjecture}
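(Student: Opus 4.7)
\medskip

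\noindent\textbf{Proof plan for Conjecture~\ref{conj}.}
The natural first step is to rewrite the sum as a single coefficient:
$$C_{\lambda_1,\lambda_2}(-1,1)\;=\;[x^{\lambda_2}]\,(1-x)^{\lambda_1}(1+x)^{\lambda_2},$$
which one reads off the product and simplifies using $\binom{\lambda_2}{\lambda_2-j}=\binom{\lambda_2}{j}$. Cauchy's formula then yields
$$C_{\lambda_1,\lambda_2}(-1,1)\;=\;\frac{1}{2\pi i}\oint \frac{(1-x)^{\lambda_1}(1+x)^{\lambda_2}}{x^{\lambda_2+1}}\,dx,$$
so with $r=\lambda_1/\lambda_2$ and $n=\lambda_2$ the integrand is $x^{-1}e^{n\phi(x)}$ for $\phi(x)=r\log(1-x)+\log(1+x)-\log x$. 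The conjecture then amounts to showing this contour integral is never exactly zero.

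I would next run a saddle-point analysis. Setting $\phi'(x)=0$ reduces to $rx^2+(r-1)x+1=0$, with discriminant $r^2-6r+1$, which changes sign at $r=3\pm 2\sqrt 2$. For $r\ge 3+2\sqrt 2\approx 5.828$ both saddles lie in $(-1,0)$ and steepest descent through the dominant one gives
$$C_{\lambda_1,\lambda_2}(-1,1)\;\sim\; (-1)^{n}\,A(r)\,R(r)^{n}\,n^{-1/2}$$
with $A(r), R(r)>0$; pushing the error term effectively should deliver the quantitative threshold $r\ge 5.8362$ announced in the abstract. For $1<r<3+2\sqrt 2$ the saddles are a complex-conjugate pair, and deforming the contour through both yields an oscillatory leading term of the form $2\,\mathrm{Re}\bigl(B(r)\,e^{in\omega(r)}\bigr)R(r)^{n}n^{-1/2}$ with $\omega(r)\in(0,\pi)$. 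Asymptotics then rule out vanishing only on a density-one set of $n$: exact cancellation requires $n\omega(r)+\arg B(r)$ to hit a very thin target modulo $\pi$, which already yields the ``almost sure'' non-vanishing stated for fixed $r$.

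For the boundary cases I would drop asymptotics and lean on polynomial identities. When $\lambda_2$ or $d:=\lambda_1-\lambda_2$ is fixed and small, $C_{\lambda_1,\lambda_2}(-1,1)$ is a polynomial of bounded degree in the free parameter with leading coefficient of explicit sign; combined with the hypergeometric form $_2F_1(-\lambda_1,-\lambda_2;1;-1)$, this makes a case-by-case sign or divisibility analysis feasible. For $r$ near $1$ the involution $x\mapsto -x$ swaps the roles of $\lambda_1$ and $\lambda_2$ and reflects the oscillatory phase, which I would feed into a sharpened complex-saddle expansion to carve out the announced explicit zero-free intervals.

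The genuine obstacle is the intermediate regime $1<r<3+2\sqrt 2$: the complex saddles produce a cosine-like factor that is asymptotically nonzero almost always but not provably nonzero always. Excluding a single accidental coincidence $n\omega(r)+\arg B(r)\equiv \pi/2\pmod{\pi}$ for a rational $r$ appears to demand Baker-type diophantine control on $\omega(r)/\pi$, which is well out of reach of elementary methods. Barring such an arithmetic input, the analytic approach can only deliver the package the paper announces: an explicit range for large $r$, explicit intervals near $r=1$, small-parameter checks, and a qualitative ``almost sure'' statement everywhere in between. A complete proof would likely require a new closed-form or algebraic identity for $C_{\lambda_1,\lambda_2}(-1,1)$ that certifies non-vanishing directly.
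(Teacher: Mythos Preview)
Your plan is essentially the paper's own program, and your diagnosis of its limits is accurate: the paper does \emph{not} prove Conjecture~\ref{conj}. Your contour representation (up to the harmless substitution $x\mapsto -z$ and the sign $(-1)^{\lambda_2}$), the saddle equation with discriminant $r^2-6r+1$, the split at $r=3+2\sqrt2$, the real-saddle effective asymptotic for $r>3+2\sqrt2$ (yielding the explicit range $r\ge 5.8362$), the oscillatory two-saddle regime for $1<r<3+2\sqrt2$ giving only an almost-sure statement, and the finite polynomial checks for small $\lambda_2$ or small $\lambda_1-\lambda_2$ --- all of this is exactly what the paper carries out in detail.

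Two minor points of comparison. First, the paper's small-parameter results (Propositions~\ref{firstvalues} and~\ref{propr1}) are obtained not by a sign or divisibility argument but by computer verification of irreducibility over~$\Q$ of the relevant polynomial families; your sketch of a ``sign or divisibility analysis'' would need to be replaced by that computational step. Second, for $r$ close to~$1$ the paper does not use the involution $x\mapsto -x$ you suggest, but rather a hypergeometric rewriting that shortens the sum to roughly $(\lambda_1-\lambda_2)/2$ terms, together with a sharpened version of the complex-saddle estimate and explicit control of the phase $\lambda_1\gamma_1+\lambda_2\gamma_2$ modulo~$\pi/2$ via a Taylor expansion of $\gamma_1,\gamma_2$ near $r=1$; this is what produces the explicit intervals of Theorem~\ref{thr1}.

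Your concluding paragraph is the honest summary: the Diophantine obstruction in the oscillatory regime is real, and the paper leaves the conjecture open for the same reason you identify.
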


Note that
$$\aligned
C_{\lambda,\lambda}(-1,1)&=\sum_{j=0}^{\lambda} (-1)^j {\lambda\choose j}{\lambda\choose \lambda-j}  
= \oint (1-z)^{\lambda}(1+z)^{\lambda} \frac{dz}{2i\pi z^{\lambda+1}}
=\oint (1-z^2)^{\lambda} \frac{dz}{2i\pi z^{\lambda+1}}\\
&=\begin{cases} (-1)^{\lambda/2} {\lambda\choose \lambda/2} &\text{if $\lambda$ is even;} \\
0 &\text{if $\lambda$ is odd.} \end{cases}
\endaligned$$
This explains why the case $\lambda_1=\lambda_2$ is excluded.

Carnevale and Voll \cite{C-V} reported  that Stanton pointed out the following property: 
the alternating summands have increasing absolute values for $\lambda_1>\lambda_2(\lambda_2+1)-1$, which
shows the next result.

\begin{proposition} \label{thmstanton}
For all $\lambda_2$ and $\lambda_1>\lambda_2(\lambda_2+1)-1$, we have $C_{\lambda_1,\lambda_2}(-1,1)\neq0$.
\end{proposition}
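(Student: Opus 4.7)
The plan is to implement Stanton's observation directly: show that under the hypothesis the absolute values $a_j := \binom{\lambda_1}{j}\binom{\lambda_2}{j}$ of the summands in
$$C_{\lambda_1,\lambda_2}(-1,1)=\sum_{j=0}^{\lambda_2}(-1)^j a_j$$
form a strictly increasing sequence, and then conclude that an alternating sum of strictly increasing positive reals cannot vanish.

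First I would compute the successive ratio
$$\frac{a_{j+1}}{a_j}=\frac{(\lambda_1-j)(\lambda_2-j)}{(j+1)^2}.$$
The strict inequality $a_{j+1}>a_j$ is therefore equivalent to $(\lambda_1-j)(\lambda_2-j)>(j+1)^2$. Since for $0\le j\le\lambda_2-1$ the left-hand side is decreasing in $j$ while the right-hand side is increasing in $j$, it suffices to check the extremal case $j=\lambda_2-1$, which reads $\lambda_1-\lambda_2+1>\lambda_2^2$, i.e.\ $\lambda_1>\lambda_2(\lambda_2+1)-1$. This is exactly the assumption, so the sequence $(a_j)_{0\le j\le\lambda_2}$ is strictly increasing.

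Second, I would deduce nonvanishing by pairing. If $\lambda_2$ is odd, group the terms as
$$C_{\lambda_1,\lambda_2}(-1,1)=\sum_{k=0}^{(\lambda_2-1)/2}(a_{2k}-a_{2k+1}),$$
and each summand is strictly negative by the monotonicity just proved, so the total is strictly negative. If $\lambda_2$ is even, group instead as
$$C_{\lambda_1,\lambda_2}(-1,1)=a_0+\sum_{k=1}^{\lambda_2/2}(a_{2k}-a_{2k-1}),$$
where $a_0=1$ and every remaining pair is strictly positive, so the total is strictly positive. In either case $C_{\lambda_1,\lambda_2}(-1,1)\neq 0$.

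There is no real obstacle here; the only thing to be careful about is identifying the worst index in the monotonicity check, and verifying that the boundary case $\lambda_1=\lambda_2(\lambda_2+1)-1$ is genuinely excluded by the strict inequality in the hypothesis.
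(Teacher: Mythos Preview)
Your proposal is correct and is precisely the argument the paper attributes to Stanton: the paper states only that ``the alternating summands have increasing absolute values for $\lambda_1>\lambda_2(\lambda_2+1)-1$'' and leaves the routine verification to the reader, which is exactly what you have supplied.
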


For a fixed $\lambda_2$, the sum $C_{\lambda_1,\lambda_2}(-1,1)$ is a polynomial in $\lambda_1$ of degree $\lambda_2$. The first values are 
$C_{\lambda_1,0}(-1,1)=1$, $C_{\lambda_1,1}(-1,1)=1-\lambda_1$. Moreover, for $2\le\lambda_2\le240$, we checked with {\tt Maple}  that it is an irreducible polynomial over $\Q$ when $\lambda_2$ is even, and that it is the product of $\lambda_1-\lambda_2$ by an irreducible polynomial over $\Q$ when $\lambda_2$ is odd. 
The even case required much less time (238 seconds) than the odd case (63908 seconds). Since an irreducible polynomial of degree at least $2$ cannot have an integer zero, we deduce that the conjecture is true for the first values of $\lambda_2$.

\begin{proposition} \label{firstvalues}
For all $1\le \lambda_2\le 240$ and $\lambda_1>\lambda_2$, we have $C_{\lambda_1,\lambda_2}(-1,1)\neq0$.
\end{proposition}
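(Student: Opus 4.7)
The plan is a direct computer algebra verification exactly along the lines sketched in the paragraph immediately preceding the statement. For fixed $\lambda_2$, expanding $\binom{\lambda_1}{j}=\lambda_1(\lambda_1-1)\cdots(\lambda_1-j+1)/j!$ shows that $C_{\lambda_1,\lambda_2}(-1,1)$ is a polynomial $P_{\lambda_2}(\lambda_1)$ in $\lambda_1$ of degree exactly $\lambda_2$ with rational coefficients. The conjecture for that particular $\lambda_2$ is thus the assertion that no integer strictly greater than $\lambda_2$ is a root of $P_{\lambda_2}$.

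I would first dispose of $\lambda_2=1$ by inspection, using $P_1(\lambda_1)=1-\lambda_1$, whose only zero is $\lambda_1=1$. For each $\lambda_2$ with $2\le\lambda_2\le 240$ I would then invoke a computer algebra system such as \texttt{Maple} to factor $P_{\lambda_2}$ into its $\Q$-irreducible factors and simply read off the integer roots. The point that makes this feasible is that an irreducible polynomial of degree at least $2$ over $\Q$ has no rational (and in particular no integer) root, so the only possible integer zeros of $P_{\lambda_2}$ come from its linear factors; checking whether any of these falls in $\{\lambda_2+1,\lambda_2+2,\ldots\}$ is immediate.

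The factorization pattern that I expect, and whose observation is the actual content of the computation, is the following. For even $\lambda_2$, $P_{\lambda_2}$ should turn out to be irreducible of degree $\lambda_2\ge 2$, hence have no integer root at all. For odd $\lambda_2$, it should split as $(\lambda_1-\lambda_2)$ times an irreducible factor of degree $\lambda_2-1\ge 2$, so that the only integer root is $\lambda_1=\lambda_2$, which the hypothesis $\lambda_1>\lambda_2$ rules out. The factor $(\lambda_1-\lambda_2)$ is no coincidence: it is forced by the vanishing of $C_{\lambda,\lambda}(-1,1)$ for odd $\lambda$ established in the display at the start of this section, which guarantees that $\lambda_2$ is an integer root of $P_{\lambda_2}$ whenever $\lambda_2$ is odd.

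The main obstacle is computational cost. The coefficients of $P_{\lambda_2}$ grow very quickly with $\lambda_2$, and certifying $\Q$-irreducibility of a dense polynomial of degree close to $240$ with large rational coefficients is expensive. The odd case is noticeably harder than the even one, presumably because dividing out $(\lambda_1-\lambda_2)$ lowers the degree by only one while the coefficient sizes are essentially unchanged, leaving a degree-$(\lambda_2-1)$ polynomial whose irreducibility still has to be certified from scratch. The fact that the even range already requires several minutes of CPU time and the odd range nearly a day suggests that pushing a pure brute-force factorization strategy substantially past $\lambda_2=240$ would be painful without some new algorithmic or number-theoretic ingredient.
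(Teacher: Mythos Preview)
Your proposal is correct and follows exactly the same approach as the paper: for each $2\le\lambda_2\le240$ you factor $P_{\lambda_2}$ over $\Q$ in \texttt{Maple}, observe irreducibility for even $\lambda_2$ and $(\lambda_1-\lambda_2)\times(\text{irreducible})$ for odd $\lambda_2$, and conclude that no integer $\lambda_1>\lambda_2$ can be a root. Your remarks on the relative cost of the even and odd cases and on the forced factor $(\lambda_1-\lambda_2)$ match the paper's discussion.
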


The aim of this paper is to study the size of $C_{\lambda_1,\lambda_2}(-1,1)$ when $\lambda_1$ and $\lambda_2$ are large enough. We shall give explicit estimates in order to
extend the range of validity of Conjecture \ref{conj}.

We start by relating $C_{\lambda_1,\lambda_2}(x,1)$ to a complex integral formula:
$$C_{\lambda_1,\lambda_2}(x,1)= \oint (1+z)^{\lambda_1}(z+x)^{\lambda_2} \frac{dz}{2i\pi z^{\lambda_2+1}}\,,$$
where the path is a simple one around $0$. This will always be the case from now on. We thus have
$$(-1)^{\lambda_2}C_{\lambda_1,\lambda_2}(-1,1)= \oint (1+z)^{\lambda_1}(1-z)^{\lambda_2} \frac{dz}{2i\pi z^{\lambda_2+1}}\,.$$

Put $\lambda=\lambda_2$ and $r=\lambda_1/\lambda_2>1$. We get
\begin{equation}\label{C}
(-1)^{\lambda_2}C_{\lambda_1,\lambda_2}(-1,1)= \oint \exp(\lambda f(z)) \frac{dz}{2i\pi z}\,,
\end{equation}
with 
\begin{equation}\label{deff}
f(z)=f(r,z):=r\log(1+z)+\log(1-z)-\log z\,.
\end{equation}
We need to find the right path to be able to find the asymptotic behaviour of this kind of integral when $\lambda$ goes to infinity.
Let us take $z=\rho e^{i\theta}$ with $-\pi\le \theta\le \pi$. The parameter $\rho$ will be optimal when $f'$ vanishes on the path. Since
\begin{equation}\label{deff'}
f'(z)=\frac{r}{1+z}-\frac{1}{1-z}-\frac{1}{z}=\frac{-rz^2+(r-1)z-1}{z(1-z^2)}\,,
\end{equation}
and $(r-1)^2-4r=r^2-6r+1=(r-3-2\sqrt2)(r-3+2\sqrt2)$, we need to distinguish several cases: $1<r<3+2\sqrt2$, $r=3+2\sqrt2$ and $r>3+2\sqrt2$.
We thus define
\begin{equation}\label{defrho}
\rho= \rho(r):=
\begin{cases} 
\frac{r-1-\sqrt{r^2-6r+1}}{2r} &\text{if $r\ge3+2\sqrt2$;}\\
\frac{1}{\sqrt r}=\left\vert \frac{r-1\pm i\sqrt{-r^2+6r-1}}{2r}\right\vert&\text{if $r\le 3+2\sqrt2$.}
\end{cases}
\end{equation}
By \eqref{C} we want to study the integral
\begin{equation}\label{defI}
I(\lambda)=I(r,\lambda):=\int_{-\pi}^{\pi} \exp(\lambda f(\rho e^{i\theta})) \frac{d\theta}{2\pi}\,.
\end{equation}

In the case $r>3+2\sqrt2$, we find $f(\rho e^{i\theta})=f(\rho)-M\frac{\theta^2}{2}+o(\theta^3)$ for some positive real number $M$, when $\theta$ goes to $0$.
We therefore  find $I(\lambda)\sim \int_{-\infty}^{\infty} \exp\left(-\lambda M\frac{\theta^2}{2}\right)\frac{d\theta}{2\pi}$. We prove an effective
version of this equivalence.

\begin{theorem} \label{thas+}
For $r>3+2\sqrt2$, put $M=M(r):=\rho^2f''(\rho)$. Then $M>0$ and we have
$$\left\vert  \frac{\sqrt{2\pi \lambda M}}{\exp(\lambda f(\rho))} I(\lambda)-1\right\vert
\le \frac{3(3+2\sqrt2)\pi^5}{256\lambda M^2}+\frac{5(3+2\sqrt2)}{24\lambda M^3}+\frac{\sqrt2\exp\left(-\lambda M\frac{\pi^2}{2}\right) }{\pi^{3/2}\sqrt{\lambda M}}\,.$$
\end{theorem}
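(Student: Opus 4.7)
The plan is to carry out Laplace's method on the circle $|z|=\rho$ with fully explicit error control. Three steps: verify that $\rho$ is a nondegenerate saddle by showing $M>0$; bound the Taylor remainder of $f(\rho e^{i\theta})$ around $\theta=0$; and compare the integral to a full-line Gaussian, controlling both the nonlinear correction on $[-\pi,\pi]$ and the Gaussian tail beyond $\pi$.

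For the positivity of $M$, I would use the saddle equation $-r\rho^2+(r-1)\rho-1=0$ to express $r=(1+\rho)/\bigl(\rho(1-\rho)\bigr)$ and substitute into $f''(\rho)=1/\rho^2-1/(1-\rho)^2-r/(1+\rho)^2$. After simplification, $M=\rho^2 f''(\rho)$ becomes a rational function of $\rho$ alone, and elementary inequalities establish its positivity on $\rho\in(0,\sqrt2-1]$ (the image of $r\in[3+2\sqrt2,\infty)$ under the saddle map). For the Taylor expansion, setting $g(\theta):=f(\rho e^{i\theta})$ and using $f'(\rho)=0$ gives $g'(0)=0$ and $g''(0)=-M$, so
$$\psi(\theta):=g(\theta)-f(\rho)+\tfrac{M}{2}\theta^2$$
vanishes to second order at the origin. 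Explicit moduli of $f^{(3)}$ and $f^{(4)}$ on $|z|=\rho$, obtained by differentiating each logarithmic piece of \eqref{deff} separately, yield bounds of the form $|\psi(\theta)|\le C_3|\theta|^3$ and $|\mathrm{Re}\,\psi(\theta)|\le C_4\theta^4$ on $|\theta|\le\pi$.

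For the comparison step I would decompose
$$\frac{\sqrt{2\pi\lambda M}}{e^{\lambda f(\rho)}}I(\lambda)-1=\sqrt{2\pi\lambda M}\bigl(A(\lambda)-B(\lambda)\bigr),$$
with
$$A(\lambda)=\int_{-\pi}^{\pi}e^{-\lambda M\theta^2/2}\bigl(e^{\lambda\psi(\theta)}-1\bigr)\frac{d\theta}{2\pi},\qquad B(\lambda)=\int_{|\theta|>\pi}e^{-\lambda M\theta^2/2}\frac{d\theta}{2\pi}.$$
The tail $B(\lambda)$ is handled by the classical inequality $\int_\pi^{\infty}e^{-\lambda M\theta^2/2}d\theta\le e^{-\lambda M\pi^2/2}/(\pi\lambda M)$, producing the exponentially small term of the theorem after multiplying by $\sqrt{2\pi\lambda M}$. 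For $A(\lambda)$, the inequality $|e^w-1|\le|w|e^{|w|}$ combined with the bounds on $\psi$ reduces the estimate to Gaussian moments $\int_{\mathbb{R}}|\theta|^k e^{-\lambda M\theta^2/2}d\theta=c_k/(\lambda M)^{(k+1)/2}$; the fourth- and sixth-order moments yield the two polynomial terms $1/(\lambda M^2)$ and $1/(\lambda M^3)$.

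The main obstacle will be obtaining the precise numerical constants $3(3+2\sqrt2)\pi^5/256$ and $5(3+2\sqrt2)/24$. This requires converting the $\rho$-dependent constants $C_3,C_4$ into $\rho$-independent ones via the uniform inequality $\rho\le\sqrt2-1$ valid for $r\ge3+2\sqrt2$, which introduces the recurring factor $(\sqrt2-1)^{-2}=3+2\sqrt2$. A second subtlety is that the naive bound $|\psi(\theta)|=O(|\theta|^3)$ alone would yield only an error of order $1/\sqrt{\lambda M^3}$; reaching the stronger $1/(\lambda M^2)$ requires the decomposition $\psi=\psi_R+i\psi_I$ with $\psi_R$ even of order $\theta^4$ and $\psi_I$ odd of order $\theta^3$, and exploiting the cancellation of the odd imaginary part under the symmetry $\theta\leftrightarrow-\theta$.
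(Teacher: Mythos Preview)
Your approach is essentially the paper's: Laplace's method on $|z|=\rho$, separation of the even real part $\psi_R=O(\theta^4)$ from the odd imaginary part $\psi_I=O(\theta^3)$, and comparison to a full-line Gaussian. The positivity of $M$, the tail bound on $B(\lambda)$, and the appearance of fourth and sixth Gaussian moments are all correct.

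There is, however, one genuine missing ingredient. Your bounds $|\psi_R|\le C_4\theta^4$ together with $|e^w-1|\le|w|e^{|w|}$ (or even the sharper $|e^w-1|\le|w|e^{\max(\mathrm{Re}\,w,0)}$) do \emph{not} reduce $A(\lambda)$ to Gaussian moments against $e^{-\lambda M\theta^2/2}$: the resulting integrand is only dominated by
\[
\lambda C_4\theta^4\,\exp\!\Bigl(-\lambda M\tfrac{\theta^2}{2}+\lambda C_4\theta^4\Bigr),
\]
and for $\theta$ near $\pi$ the quartic term overwhelms the quadratic one (one has $M<1$ while $C_4\pi^2$ is of order $3+2\sqrt2$). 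The paper closes this gap with a separate \emph{global quadratic} upper bound, not obtainable from $f^{(4)}$: from $\log(1+x)\le x$ and $\sin(\theta/2)\ge|\theta|/\pi$ one gets
\[
g(\theta)-g(0)\le -KM\,\frac{\theta^2}{2}\quad\text{on }|\theta|\le\pi,\qquad K=\frac{4}{\pi^2},
\]
hence $\max(\psi_R,0)\le(1-K)M\theta^2/2$ and the combined exponent becomes $-\lambda KM\theta^2/2$, negative on the whole interval. The fourth Gaussian moment is then taken against $e^{-\lambda KM\theta^2/2}$, introducing a factor $K^{-5/2}=\pi^5/32$; this is precisely the source of the $\pi^5$ in $3(3+2\sqrt2)\pi^5/256$. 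Without this step the argument does not close and that constant remains unexplained.

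A smaller point: the paper does not use $|e^w-1|\le|w|e^{|w|}$, which would force you to control $e^{\lambda|\psi_I|}$. Instead it exploits the real-valuedness of $I(\lambda)$ to write the integrand as $e^{\lambda\psi_R}\cos(\lambda\psi_I)-1$ and applies $|e^x\cos y-1|\le|x|e^{\max(x,0)}+y^2/2$, which cleanly separates the two contributions and makes the imaginary part enter only quadratically.
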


This theorem shows that, for any $r$, we have $I(\lambda)\neq0$ for $\lambda$ large enough. We use Proposition \ref{firstvalues} and further tools to deduce a large range of validity for Conjecture \ref{conj}.

\begin{theorem} \label{th0+} For  $\lambda_1\ge 5.8362\lambda_2>0$, we have $C_{\lambda_1,\lambda_2}(-1,1)\neq0$.
\end{theorem}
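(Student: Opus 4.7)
The natural plan is to split on the size of $\lambda_2$ and apply Theorem~\ref{thas+} in the asymptotic regime.

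First, if $\lambda_2 \le 240$, Proposition~\ref{firstvalues} immediately yields $C_{\lambda_1,\lambda_2}(-1,1) \neq 0$ for every $\lambda_1 > \lambda_2$, in particular for $\lambda_1 \ge 5.8362\lambda_2$. So we may assume $\lambda_2 \ge 241$.

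Second, for $\lambda_2 \ge 241$ with $r := \lambda_1/\lambda_2 \ge 5.8362 > 3 + 2\sqrt{2}$, Theorem~\ref{thas+} applies. By \eqref{C}, it suffices to prove $I(\lambda) \neq 0$. Since $\exp(\lambda f(\rho))/\sqrt{2\pi \lambda M}$ is a positive real number, Theorem~\ref{thas+} forces the real quantity $A := \sqrt{2\pi \lambda M}\,e^{-\lambda f(\rho)}\,I(\lambda)$ to satisfy $|A-1| \le B(r, \lambda)$, where $B(r,\lambda)$ denotes the stated explicit error bound. As soon as $B(r, \lambda) < 1$ we obtain $A > 0$, hence $I(\lambda) > 0 \neq 0$.

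The verification of $B(r,\lambda) < 1$ on the range $r \ge 5.8362$, $\lambda \ge 241$ proceeds in three steps. (i) Show that $M(r) = 1 - r\rho^2/(1+\rho)^2 - \rho^2/(1-\rho)^2$ is strictly increasing on $[3+2\sqrt{2},\infty)$ from $0$ to $1$, by differentiating directly and using the saddle-point relation $-r\rho^2 + (r-1)\rho - 1 = 0$, which gives $d\rho/dr = -\rho(1-\rho)/\sqrt{r^2 - 6r + 1}$. (ii) Evaluate $M(5.8362)$ numerically to obtain an effective lower bound $M_0$ on the allowed range. (iii) Observe that each of the three terms in $B(r, \lambda)$ is decreasing in both $M$ and $\lambda$, so it suffices to check the single inequality $B(5.8362, 241) < 1$.

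The main obstacle is step (iii). Since $M$ vanishes at the critical value $r_0 = 3 + 2\sqrt{2} \approx 5.8284$ where the two real saddle points of $f$ coalesce, and $5.8362 - r_0 < 0.008$, the value $M(5.8362)$ is small (numerically on the order of $0.1$), so the term $5(3+2\sqrt{2})/(24 \lambda M^3)$ remains appreciable at $\lambda = 241$. Closing the inequality is therefore the crux of the proof, and will likely require one of two refinements: either extending the computational cutoff of Proposition~\ref{firstvalues} beyond $\lambda_2 = 240$ up to the smallest $N$ with $B(5.8362, N+1) < 1$ and letting Theorem~\ref{thas+} finish, or sharpening the saddle-point estimate near $r_0$ by exploiting the local scaling $M(r) \sim 2^{1/4}\sqrt{r - r_0}$ that follows from \eqref{defrho}. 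Either of these closes the last wedge between Stanton's range and the asymptotic regime where Theorem~\ref{thas+} already trivialises the bound.
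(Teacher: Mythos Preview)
Your overall architecture matches the paper's: split on $\lambda_2\le 240$ via Proposition~\ref{firstvalues}, and for $\lambda_2\ge 241$ use the saddle-point estimate to force $I(\lambda)\ne 0$. You also diagnose the obstruction correctly: with $M(5.8362)\approx 0.099$, the first two terms of the bound in Theorem~\ref{thas+} already exceed~$8$ at $\lambda=241$, so step~(iii) fails as stated. Your proposal is therefore not yet a proof; it is an outline that names the gap and the two natural remedies.

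The paper in fact uses \emph{both} of your remedies, in the following concrete form. For remedy~(b), it does not merely perturb Theorem~\ref{thas+} but returns to Lemma~\ref{key+} with the sharper constants valid near $r_0=3+2\sqrt2$: Lemma~\ref{r+g} gives $K=1$ (instead of $4/\pi^2$) on $|\theta|\le\pi/3$, and Lemmas~\ref{r+g1} and~\ref{r+h} give tighter $C_g,C_h$ depending on $c=\cos\delta$. This yields a new explicit bound $\Phi_2(r,\lambda,\delta)$, which one checks is decreasing in $r$ and in $\lambda$; the numerics give $\Phi_2(5.8478,241,0.75)<1$ and $\Phi_2(5.8362,980,0.5)<1$. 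Thus the asymptotic argument alone covers $r\ge 5.8478$ for all $\lambda\ge 241$, and $r\ge 5.8362$ for all $\lambda\ge 980$. For remedy~(a), rather than extending Proposition~\ref{firstvalues} (i.e.\ factoring more polynomials $C_{X,\lambda_2}$), the paper simply evaluates $C_{\lambda_1,\lambda_2}(-1,1)$ directly for the finitely many pairs in the residual box $241\le\lambda_2\le 979$, $5.8362\,\lambda_2\le\lambda_1<5.8478\,\lambda_2$. This is much cheaper than pushing the irreducibility check beyond $\lambda_2=240$.

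So the missing idea in your write-up is precisely the refined bound $\Phi_2$: going back to the key lemma with $K=1$ and $\delta<\pi$ is what makes the constants small enough to bite, and even then a finite computer check is still required to finish.
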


Note that the value $5.8362$ is close to $3+2\sqrt2=5.82842\dots$, the limit of the method. 

Since $r=\lambda_1/\lambda_2$ is a rational number, the case $r=3+2\sqrt2$ cannot occur, so we shall not detail it. The same method would provide an effective version of the equality
$$\frac{2^{2/3}3^{1/6}\pi(\sqrt2+1)^{1/3}}{\Gamma(1/3)}\times
\frac{\lambda^{1/3}}{2^{(2+\sqrt2)\lambda}} I(3+2\sqrt2,\lambda)=1+O\left(\frac{1}{\lambda^{1/6}}\right)\,.$$

In the case $1<r<3+2\sqrt2$, the situation is quite different. There are two conjugate points on the integrating circle where $f'$ vanishes. Their contributions partially cancel each other, so we cannot get an exact equivalent term:
for some choices of $(\lambda_1,\lambda_2)$, the implied constant may be really small. The analog of Theorem \ref{thas+} has indeed the following form.

\begin{theorem} \label{thas-} For $1<r<3+2\sqrt2$, define  $\gamma_1=\arccos\frac{3r-1}{2\sqrt2r}$, $\gamma_2=-\arccos\frac{r-3}{2\sqrt2}$ and 
$\gamma_3=\frac{1}{2}\arcsin\frac{(r-1)^2}{4r}$.  For $\lambda\ge \frac{512r^{3/2}}{(r+1)(-r^2+6r-1)^{3/2}}$, we have
$$\left\vert \frac{(-r^2+6r-1)^{1/4} \sqrt{\pi\lambda}}{2^{1+\frac{(r+1)\lambda}{2} }} I(\lambda)
- \cos\left((r\gamma_1+\gamma_2)\lambda+\gamma_3\right) \right\vert
\le \frac{16336}{\sqrt\lambda (-r^2+6r-1)^{11/4}} \,.$$
\end{theorem}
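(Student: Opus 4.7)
The plan is to evaluate \eqref{defI} by the saddle-point method using the two complex-conjugate saddles lying on the integration circle. Writing $F(\theta):=f(\rho e^{i\theta})$ with $\rho=1/\sqrt r$, the zeros of $f'$ in \eqref{deff'} are $z_\pm=\rho e^{\pm i\theta_0}$, where $\cos\theta_0=(r-1)/(2\sqrt r)$ and $\sin\theta_0=\sqrt{-r^2+6r-1}/(2\sqrt r)$; both lie on the path. The chain rule gives $F''(\theta_0)=-z_+^2 f''(z_+)$, whose real part one checks to be negative, so $\pm\theta_0$ are honest saddles for $\mathrm{Re}\,F$, and their two contributions, being complex conjugates, will combine into the cosine of the statement.

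The first step is to compute the saddle quantities exactly. From $|1+z_+|^2=2$ and $|1-z_+|^2=2/r$ one obtains $\mathrm{Re}\,f(z_+)=(r+1)\log 2/2$, which produces the factor $2^{(r+1)\lambda/2}$. The identification $\arg(1+z_+)=\gamma_1$ is immediate from the real and imaginary parts of $1+z_+$, and the identity $\cos(\arg(1-z_+)-\theta_0)=(r-3)/(2\sqrt 2)$, which follows from $(r^2-1)-(-r^2+6r-1)=2r(r-3)$, gives $\arg(1-z_+)-\theta_0=\gamma_2$; hence $\mathrm{Im}\,f(z_+)=r\gamma_1+\gamma_2$. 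For the second derivative, the identity $z^2f''(z)=z(zf'(z))'-zf'(z)$ combined with $f'(z_+)=0$ yields a compact expression whose modulus simplifies to $\sqrt{-r^2+6r-1}/2$ and whose argument is $-2\gamma_3$, consistent with $\sin 2\gamma_3=(r-1)^2/(4r)$ and $\cos 2\gamma_3=(r+1)\sqrt{-r^2+6r-1}/(4r)$. The principal saddle contribution at $z_+$ is therefore
$$\frac{e^{\lambda f(z_+)}}{\sqrt{-2\pi\lambda\,z_+^2 f''(z_+)}}=\frac{2^{(r+1)\lambda/2}}{\sqrt{\pi\lambda}\,(-r^2+6r-1)^{1/4}}\,e^{i\left((r\gamma_1+\gamma_2)\lambda+\gamma_3\right)},$$
and adding its conjugate from $z_-$ yields exactly the cosine of the theorem multiplied by the announced prefactor.

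To upgrade this asymptotic equivalence into the explicit bound, I would split the integral into two short arcs of half-width $\delta$ around $\pm\theta_0$ and the complementary region. On each short arc, $F$ is replaced by its order-$2$ Taylor polynomial at $\theta_0$; the resulting Gaussian is extended to the full line (the tails are exponentially small since $\mathrm{Re}\,F''(\theta_0)\asymp -\sqrt{-r^2+6r-1}$), and the Taylor remainder is bounded by $\lambda\sup|F'''|\,\delta^3$ times the Gaussian, integrating to a term of order $\sup|F'''|/(\lambda^{1/2}|F''|^{3/2})$. On the complement, the lower bound $\mathrm{Re}\,f(z_+)-\mathrm{Re}\,f(\rho e^{i\theta})\ge c(r)(\theta\mp\theta_0)^2$ valid on the full neighbourhood between the two saddles reduces the contribution to an exponentially small error. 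Choosing $\delta$ of order $(-r^2+6r-1)^{-1/4}\lambda^{-1/2}$ up to logarithmic factors balances the two error sources and produces the $1/\sqrt\lambda$ size of the right-hand side.

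The hard part is to keep \emph{every} constant explicit and to monitor how the negative powers of $-r^2+6r-1$ propagate through each estimate: the size of $F'''$ on the arc, the spread of the Gaussian, and the off-saddle lower bound for $\mathrm{Re}\,F$ each deteriorate as $r\to 3+2\sqrt 2$. Tracking these degeneracies is exactly what dictates the threshold $\lambda\ge 512\,r^{3/2}/[(r+1)(-r^2+6r-1)^{3/2}]$, which is the precise condition for the cubic Taylor remainder to dominate the higher-order terms, and it is what finally produces the coefficient $16336/[(-r^2+6r-1)^{11/4}\sqrt\lambda]$ in the error bound.
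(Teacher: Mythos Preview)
Your identification of the two conjugate saddles, the computation of $\mathrm{Re}\,f(z_+)$, of $\gamma_1,\gamma_2,\gamma_3$, and of $|z_+^2f''(z_+)|$ are correct and coincide with the paper's Section~3.1. The strategy of splitting into short arcs around $\pm\theta_0$ and a complementary piece is exactly the paper's Lemma~\ref{key-}. So the architecture is right.

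Two concrete points are off. First, your choice $\delta\asymp(-r^2+6r-1)^{-1/4}\lambda^{-1/2}$ does not work: with that scaling $\lambda\delta^2$ stays bounded, so the Gaussian tail $e^{-c\lambda\delta^2}$ does not decay and the off-saddle contribution is not controlled. The paper takes $\delta=\bigl((r+1)\lambda/8\bigr)^{-1/3}$, so that $\lambda\delta^3$ is a fixed constant while $\lambda\delta^2\to\infty$; the exponential tails are then converted to powers of $\lambda$ via $xe^{-x}\le1$ and $x^3e^{-x}\le27e^{-3}$, and all three error terms become $O(\lambda^{-1/2})$. Second, your reading of the threshold $\lambda\ge512r^{3/2}/[(r+1)(-r^2+6r-1)^{3/2}]$ is not its actual role: it is precisely the condition $\delta\le\sin\alpha/2\le\alpha/2$, which is needed so that the arc $[\alpha-\delta,\alpha+\delta]$ lies in the domain where the explicit Taylor bounds (the paper's Lemmas~\ref{r-f} and~\ref{r-g}) apply.

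Finally, the statement is entirely about explicit constants, and your proposal stops at ``the hard part is to keep every constant explicit''. The paper does this work: it proves $C_f\le0.33846(r+1)^2/r^2$ and $C_g=(r+1)/4$ uniformly on the relevant arc, feeds these into the key lemma, and then bounds the resulting three-term expression by a single function of $r$ to reach $16336$. Without that bookkeeping the proposal is an outline of the asymptotic equivalence, not a proof of the stated inequality.
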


It seems quite difficult to find a lower bound for $\vert \cos\left(\lambda_1\gamma_1+\lambda_2\gamma_2+\gamma_3\right)\vert$ for every $\lambda_1$, $\lambda_2$,
and thus to show that $I(\lambda)\neq0$. However, we can upper bound the number of possible exceptions.

\begin{theorem}\label{thexceptions} For $r>1$, we have
$$\displaylines{
\#\{ (\lambda_1,\lambda_2)\ :\ \lambda_1=r\lambda_2\,,\ C_{\lambda_1,\lambda_2}(-1,1)\neq0\,,\ \lambda_2\le x\}
\hfill\cr\hfill
\le\begin{cases}
O_r(1) &\text{if $r>3+2\sqrt2$;}\\
\frac{102644}{ (-r^2+6r-1)^{11/4}\log\left(\frac{1+\sqrt5}{2}\right)}\sqrt x\log x+O_r(\sqrt x) &\text{if $1<r<3+2\sqrt2$.}
\end{cases}}$$
\end{theorem}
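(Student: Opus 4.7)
I split by the critical ratio $r = 3+2\sqrt{2}$. When $r > 3+2\sqrt{2}$, Theorem \ref{thas+} gives $\sqrt{2\pi\lambda M}\,e^{-\lambda f(\rho)}I(\lambda) = 1 + o(1)$ with an explicit, decreasing error bound. Hence one can compute a threshold $\lambda_0(r)$ beyond which the left-hand side stays in $(0,2)$, forcing $I(\lambda)\neq 0$, and at most $\lambda_0(r) = O_r(1)$ exceptional pairs with $\lambda_2 \leq x$ remain.

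When $1 < r < 3+2\sqrt{2}$, write $r = p/q$ in lowest terms, so the admissible pairs are $(\lambda_1,\lambda_2) = (kp, kq)$ with $k \leq K := \lfloor x/q \rfloor$. Specializing Theorem \ref{thas-} to $\lambda = kq$, any $k$ with $I(kq) = 0$, past the explicit $r$-dependent threshold of that theorem, must satisfy
\[
\bigl|\cos\bigl(k(p\gamma_1 + q\gamma_2) + \gamma_3\bigr)\bigr| \leq \frac{16336}{(-r^2+6r-1)^{11/4}\sqrt{kq}}.
\]
Setting $\theta := (p\gamma_1 + q\gamma_2)/\pi$ and $c := \gamma_3/\pi - 1/2$, and using the elementary inequality $|\cos(\pi u)| \geq 2\,\|u - 1/2\|$ for $u$ near $1/2 + \mathbb{Z}$, this translates into the inhomogeneous Diophantine inequality $\|k\theta + c\| \leq B/\sqrt{k}$ with an explicit $B = B(r)$. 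The problem reduces to counting such $k \leq K$.

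To bound this count I use the continued fraction expansion of $\theta$ with convergents $p_n/q_n$. By the three-distance theorem, on each range $q_m \leq k < q_{m+1}$ the orbit $\{k\theta + c\}$ has gaps of order $1/q_{m+1}$, so the number of $k$ in that range with $\|k\theta + c\| < B/\sqrt{q_m}$ is at most $B q_{m+1}/\sqrt{q_m} + O(1)$. The Fibonacci lower bound $q_m \geq \phi^{m-2}$ with $\phi = (1+\sqrt{5})/2$ caps the number of scales up to $K$ by $\log K / \log \phi + O(1)$; the largest scale contributes $O(B\sqrt{K})$, and summing over all scales yields the announced $\sqrt{x}\log x / \log \phi$ main term, with the explicit constant $102644$ recovered by propagating $16336$ through the argument. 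The principal difficulty lies in the bookkeeping of this final step — handling the $k$-dependent radius $B/\sqrt{k}$ with enough care to recover the precise numerical constant rather than a mere order-of-magnitude bound — together with the a priori verification that $\theta$ is irrational (otherwise the cosine is periodic in $k$ and Theorem \ref{thas-} again forbids vanishing past a finite threshold, absorbing these exceptions into the $O_r(\sqrt{x})$ remainder).
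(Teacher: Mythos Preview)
Your overall plan—splitting at $r=3+2\sqrt2$, invoking Theorem~\ref{thas+} for large $r$, and for $1<r<3+2\sqrt2$ translating Theorem~\ref{thas-} into a Diophantine condition $\|k\theta+c\|\le B/\sqrt{k}$—is sound and matches the paper up to the final counting step. The gap is there. Your three-distance estimate bounds the number of solutions on each block $[q_m,q_{m+1})$ by $Bq_{m+1}/\sqrt{q_m}+O(1)$, and you then assert that the largest block contributes $O(B\sqrt K)$, so that summing over $O(\log K/\log\phi)$ blocks gives the main term. But $Bq_{m+1}/\sqrt{q_m}$ is $O(B\sqrt K)$ only when the partial quotients of $\theta$ are bounded: a single large $a_{m+1}$ (so $q_m\ll q_{m+1}$) makes one block already exceed $B\sqrt K$, and nothing is known about the Diophantine type of $\theta=(p\gamma_1+q\gamma_2)/\pi$ for generic $r$. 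This is a genuine obstruction, not bookkeeping. Your sketch also gives no mechanism to produce the specific constant $102644$; in the paper it arises as $4\lceil 16336\cdot\pi/2\rceil$ through three steps—a factor $\pi/2$ from converting $|\cos|$ to a nearest-integer distance, one doubling from the triangle inequality applied to a \emph{pair} of exceptions, and one from the $1/(2q)$ in Legendre's criterion—none of which has a counterpart in a direct three-distance count.

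The paper avoids any dependence on partial quotients by a different device. It covers $[1,x]$ by windows $[y,\,y+c\sqrt y]$ with $c=(-r^2+6r-1)^{11/4}/102644$ and observes that if $\lambda<\lambda'$ are two exceptions in the same window, then subtracting their individual Diophantine inequalities yields $\|(r\gamma_1+\gamma_2)(\lambda'-\lambda)\|\le 1/(2(\lambda'-\lambda))$; Legendre's theorem then forces $\lambda'-\lambda$ to be a convergent denominator of $r\gamma_1+\gamma_2$. Since convergent denominators satisfy $q_n\ge F_{n+1}\ge\phi^n/\sqrt5$ for \emph{every} irrational, there are at most $\log(c\sqrt{5y})/\log\phi$ of them below $c\sqrt y$, hence at most that many exceptions per window; summing over the $\asymp\sqrt x/c$ windows gives the stated bound with its exact constant. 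This argument is completely insensitive to the continued-fraction expansion of the angle, which is precisely why it succeeds uniformly in $r$ where your approach stalls.
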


Note that this theorem implies that, for any fixed $r>1$, we have $C_{\lambda_1,\lambda_2}(-1,1)\neq0$ almost surely.
We can get more explicit estimates when $r$ is close to $1$, following a suggestion of Dennis Stanton. 
This case is also of special interest since $C_{\lambda_1,\lambda_2}(-1,1)$ can be reduced to a sum with at most $\left\lfloor\frac{\lambda_1}{2}\right\rfloor-\left\lceil\frac{\lambda_2}{2}\right\rceil$ terms, using a hypergeometric transformation. This enables us to prove the following analogue of Proposition \ref{firstvalues}.

\begin{proposition} \label{propr1}
For all $1\le \lambda_1-\lambda_2\le 701$, we have $C_{\lambda_1,\lambda_2}(-1,1)\neq0$.
\end{proposition}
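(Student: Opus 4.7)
The plan is to exploit the algebraic identity
\[
(1+z)^{\lambda_1}(1-z)^{\lambda_2} \;=\; (1+z)^{d}(1-z^2)^{\lambda_2}, \qquad d := \lambda_1 - \lambda_2 \ge 1,
\]
which, inserted into the contour-integral formula used just before Theorem~\ref{thas+} and followed by extraction of the coefficient of $z^{\lambda_2}$, yields the reduction
\[
(-1)^{\lambda_2}C_{\lambda_1,\lambda_2}(-1,1) \;=\; \sum_{\substack{0 \le m \le d \\ m \equiv \lambda_2 \pmod 2}} (-1)^{(\lambda_2-m)/2}\binom{d}{m}\binom{\lambda_2}{(\lambda_2-m)/2}.
\]
The crucial gain is that the number of surviving terms is at most $\lfloor d/2 \rfloor + 1$, independent of $\lambda_2$; this is a concrete incarnation of the hypergeometric transformation announced in the introduction.

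Next, I would fix $d$ with $1 \le d \le 701$ and a parity $\varepsilon \in \{0,1\}$, write $\lambda_2 = 2n+\varepsilon$, and exploit the elementary identity
\[
\binom{\lambda_2}{(\lambda_2-m)/2} \;=\; \binom{\lambda_2}{\lfloor\lambda_2/2\rfloor}\prod_{j=1}^{(m-\varepsilon)/2}\frac{n-j+1}{n+j+\varepsilon}
\]
to pull out the (nonzero) central binomial coefficient. Clearing denominators by multiplying through by the positive product $\prod_{j=1}^{\lfloor d/2\rfloor}(n+j+\varepsilon)$ converts the condition $C_{\lambda_1,\lambda_2}(-1,1)=0$ into the vanishing of an explicit polynomial $Q_{d,\varepsilon}(n) \in \mathbb{Z}[n]$ of degree at most $\lfloor d/2\rfloor \le 350$, evaluated at the appropriate nonnegative integer $n$.

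With this polynomial reformulation, the conclusion proceeds exactly as in Proposition~\ref{firstvalues}: for each of the $1402$ pairs $(d,\varepsilon)$ with $1 \le d \le 701$ and $\varepsilon \in \{0,1\}$, one computes $Q_{d,\varepsilon}$ by direct expansion of the reduced sum and factors it over $\mathbb{Q}$ in \texttt{Maple}. Each irreducible factor of degree $\ge 2$ is automatically free of integer roots, while the (possibly several) linear factors of each $Q_{d,\varepsilon}$ are inspected individually to confirm that none produces an integer $n$ with $\lambda_2 > 240$---the range $\lambda_2 \le 240$ being already secured by Proposition~\ref{firstvalues}. The main obstacle is purely computational: factoring up to $1402$ polynomials of degree near $350$ extends the already-slow odd-parity computation behind Proposition~\ref{firstvalues}, and the sporadic subcases $d \equiv 2 \pmod 4$, in which the nominal leading coefficient $\sum_i(-1)^i\binom{d}{2i} = 2^{d/2}\cos(d\pi/4)$ vanishes and the effective degree of $Q_{d,\varepsilon}$ drops, must be tracked separately to avoid spurious factorisations.
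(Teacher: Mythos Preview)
Your proposal is correct and follows essentially the same route as the paper: the paper also factors $(1+z)^{\lambda_1}(1-z)^{\lambda_2}=(1+z)^{d}(1-z^{2})^{\lambda_2}$, extracts the resulting short sum, and reorganizes it into four families of polynomials $\widetilde C_{l,\ast,\ast}(k)$ in $k=\lfloor\lambda_2/2\rfloor$ of degree at most $\lfloor d/2\rfloor$, which coincide with your $Q_{d,\varepsilon}$ up to the harmless extra linear factor you may introduce when over-clearing denominators. The only difference is that the paper verifies with \texttt{Maple} the stronger claim that each $\widetilde C$ is \emph{irreducible} over $\Q$ (after computing leading terms to confirm degree $\ge 2$), which directly excludes integer zeros and makes your fallback to Proposition~\ref{firstvalues} unnecessary.
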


We then prove the following theorems.

\begin{theorem} \label{thas1} For $1<\lambda_1/\lambda_2<3+2\sqrt2$, define  $\gamma_1=\arccos\frac{3\lambda_1-\lambda_2}{2\sqrt2\lambda_1}$ and $\gamma_2=-\arccos\frac{\lambda_1-3\lambda_2}{2\sqrt2\lambda_2}$.  
Assume $\lambda_1-\lambda_2\ge702$. We then have
$\left\vert \frac{ \sqrt{\pi\lambda_2}}{2^{\frac{\lambda_1+\lambda_2+1}{2}}} I(\lambda_2)
- \cos\left(\lambda_1\gamma_1+\lambda_2\gamma_2\right) \right\vert
\le 0.0165$ for $\lambda_1-\lambda_2\le  \sqrt{8\pi\lambda_2}$, and
$$\sqrt\lambda_2\left\vert \frac{ \sqrt{\pi\lambda_2}}{2^{\frac{\lambda_1+\lambda_2+1}{2}}} I(\lambda_2)
- \cos\left(\lambda_1\gamma_1+\lambda_2\gamma_2\right) \right\vert
\le\begin{cases}
 1.05882 &\text{if $\log\lambda_2 \le \lambda_1-\lambda_2\le \sqrt{\pi\lambda_2}$;}\\
 1.30775 &\text{if $\sqrt{\pi\lambda_2}\le \lambda_1-\lambda_2\le  \sqrt{2\pi\lambda_2}$;}\\
 1.50929 &\text{if $\sqrt{2\pi\lambda_2}\le \lambda_1-\lambda_2\le  \sqrt{3\pi\lambda_2}$;}\\
 1.68876 &\text{if $\sqrt{3\pi\lambda_2}\le  \lambda_1-\lambda_2\le  \sqrt{4\pi\lambda_2}$;}\\
 1.85482 &\text{if $\sqrt{4\pi\lambda_2}\le \lambda_1-\lambda_2\le  \sqrt{5\pi\lambda_2}$;}\\
 2.01189 &\text{if $\sqrt{5\pi\lambda_2}\le \lambda_1-\lambda_2\le  \sqrt{6\pi\lambda_2}$;}\\
 2.1626   &\text{if $ \sqrt{6\pi\lambda_2}\le \lambda_1-\lambda_2\le  \sqrt{7\pi\lambda_2}$;}\\
 2.30865 &\text{if $\sqrt{7\pi\lambda_2}\le \lambda_1-\lambda_2\le  \sqrt{8\pi\lambda_2}$.}\\
 \end{cases}$$
\end{theorem}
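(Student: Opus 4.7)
My plan is to sharpen the saddle-point analysis underlying Theorem~\ref{thas-} in the regime of small $r-1$, equivalently $\lambda_1-\lambda_2 = O(\sqrt{\lambda_2})$. Directly invoking Theorem~\ref{thas-} will not suffice, because its error term $16336/(\sqrt{\lambda_2}(-r^2+6r-1)^{11/4})\approx 361/\sqrt{\lambda_2}$ at $r\approx 1$ is much larger than the required constants $1.05882,\dots,2.30865$. The strategy is therefore to redo the saddle-point analysis specifically for $r$ close to $1$ and then pass from its sharp form to the simplified expression claimed here by separately controlling the substitutions $(-r^2+6r-1)^{1/4}\mapsto \sqrt 2$ and $\gamma_3\mapsto 0$. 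Since $-r^2+6r-1 = 4-(r-1)^2$, both $(-r^2+6r-1)^{1/4}/\sqrt{2}-1$ and $\gamma_3=\frac{1}{2}\arcsin\frac{(r-1)^2}{4r}$ are $O((r-1)^2)=O(1/\lambda_2)$ in our range, hence contribute $O(1/\sqrt{\lambda_2})$ after multiplication by $\sqrt{\lambda_2}$.

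For the saddle-point analysis I would split the $\theta$-integral defining $I(\lambda_2)$ into arcs of half-width $\delta$ about the saddle angles $\pm\phi=\pm\arg z_+$, bound the tails using the strict inequality $\mathrm{Re}\,f(\rho e^{i\theta})<\frac{r+1}{2}\log 2$ outside these arcs, and Taylor-expand $f$ up to a controlled order on each arc. A helpful observation in this regime is that at $r=1$ the third derivative of $g(\theta):=f(\rho e^{i\theta})$ at $\pi/2$ vanishes: direct computation gives $g'''(\theta)=8ie^{2i\theta}(1+e^{2i\theta})/(1-e^{2i\theta})^3$ at $r=1$, which is zero at $\theta=\pi/2$. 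Hence $g'''(\phi)=O(r-1)$ for $r$ near $1$, and the resulting saddle-point correction is $O((r-1)^2/\lambda_2)$ rather than $O(1/\lambda_2)$. Combining all error sources --- tails, Taylor remainder, algebraic substitutions --- yields an explicit bound of the form $\sqrt{\lambda_2}|J-\cos(\lambda_1\gamma_1+\lambda_2\gamma_2)|\le A+B_k$, where $J=\sqrt{\pi\lambda_2}\,I(\lambda_2)/2^{(\lambda_1+\lambda_2+1)/2}$, $A$ is a universal constant from the Gaussian remainder, and $B_k$ encodes the range-dependent error in the $k$th subrange $\sqrt{k\pi\lambda_2}\le\lambda_1-\lambda_2\le\sqrt{(k+1)\pi\lambda_2}$; evaluating $A+B_k$ numerically for $k=1,\dots,8$ should produce the eight listed constants.

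The uniform bound $|J-\cos(\lambda_1\gamma_1+\lambda_2\gamma_2)|\le 0.0165$ for $\lambda_1-\lambda_2\le\sqrt{8\pi\lambda_2}$ then follows from the $k=7$ case together with the hypothesis $\lambda_1-\lambda_2\ge 702$: since $\sqrt{\lambda_2}\ge(\lambda_1-\lambda_2)/\sqrt{8\pi}\ge 702/\sqrt{8\pi}$, one gets $2.30865/\sqrt{\lambda_2}\le 2.30865\sqrt{8\pi}/702\approx 0.0165$. The main obstacle is precisely this numerical tightness: each of the eight constants is a sum of several positive error contributions, each of which must be estimated without slack. This requires careful tracking of the dependence of $g''(\phi)$, $g'''(\phi)$, $g''''(\phi)$ on $r-1$, a quantitative lower bound for $\frac{r+1}{2}\log 2 - \mathrm{Re}\,f(\rho e^{i\theta})$ on the complement of the arcs, and an optimal choice of the localization width $\delta$ in each subrange.
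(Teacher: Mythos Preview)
Your overall strategy --- rerun the saddle-point analysis with estimates tailored to $r$ close to $1$, then simplify to the stated form --- is the paper's, and your derivation of the uniform bound $0.0165$ from the last constant $2.30865$ via $\sqrt{\lambda_2}\ge 702/\sqrt{8\pi}$ is essentially how the paper obtains it. But your concrete implementation diverges from the paper's and remains only a sketch.

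The paper does \emph{not} exploit your observation that $g'''(\alpha)$ vanishes at $r=1$. Instead it proves (Lemma~\ref{r1f}) the crude bound
\[
\Bigl|f(\rho e^{i\theta})-f(\rho e^{i\alpha})+\tfrac{1}{2}(\theta-\alpha)^2\Bigr|\le\tfrac{1}{3}|\theta-\alpha|^3+\tfrac{r-1}{4}(\theta-\alpha)^2
\]
on $|\theta-\alpha|\le\sqrt{r-1}$, where the cubic coefficient $1/3$ is independent of $r$, together with the companion bound $g(\theta)-g(\alpha)\le-\tfrac{1}{2}(\theta-\alpha)^2+\tfrac{1}{2}|\theta-\alpha|^3$ on $|\theta-\alpha|\le\alpha/2$ (Lemma~\ref{r1g}). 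These feed into a dedicated key lemma (Lemma~\ref{key1}) which, with the explicit choices $\delta=\sqrt{r-1}$ and $\delta_1=\alpha/2$, yields a concrete error function $\Phi_4(r,\lambda)$. The eight constants are then obtained by maximizing $\sqrt\lambda\,\Phi_4(r,\lambda)$ over each subrange: the cubic term contributes a fixed amount $\approx 0.58$, and the growth across the subranges comes from the term $(r-1)\sqrt\lambda\big/\bigl(2^{7/2}(\tfrac{3-r}{4}-\tfrac{\delta}{3})^{3/2}\bigr)$, bounded by roughly $\sqrt{k\pi}/4$ on the $k$th subrange.

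Your alternative via $g'''(\alpha)=O(r-1)$ could in principle give sharper bounds, but two points need care. First, vanishing of $g'''$ at the single point $\alpha$ does not by itself shrink the cubic Taylor remainder on an interval; to exploit it you must go to fourth order and use parity of the Gaussian weight to kill the odd $(\theta-\alpha)^3$ contribution, which you do not spell out. Second, the constants $1.05882,\dots,2.30865$ are outputs of the paper's particular computation and will not be reproduced by a different expansion; what your method must show is that its constants are \emph{at most} these, not equal to them. As written, none of the explicit estimates, the choice of localization width, or the numerical evaluations have been carried out, so the proposal is a plan rather than a proof.
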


\begin{theorem} \label{thr1} We have $C_{\lambda_1,\lambda_2}(-1,1)\neq0$ in the following cases:
\begin{itemize}
\item $\lambda_1+\lambda_2\equiv \modd{0} {4}$: $\lambda_2<\lambda_1\le\lambda_2+\sqrt{2\pi\lambda_2}-1.0443$ or $\lambda_2+\sqrt{2\pi\lambda_2}+3.1407\le\lambda_1\le \lambda_2+\sqrt{6\pi\lambda_2}-0.9275$;
\item $\lambda_1+\lambda_2\equiv \modd{1} {4}$: $\lambda_2<\lambda_1\le\lambda_2+\sqrt{3\pi\lambda_2}-0.984$ or $\lambda_2+\sqrt{3\pi\lambda_2}+3.8433\le \lambda_1\le \sqrt{7\pi\lambda_2}-0.9231$;
\item $\lambda_1+\lambda_2\equiv \modd{2} {4}$, $\lambda_2+\max\left(702,2.0582\lambda_2^{1/4}\right)\le\lambda_1\le\lambda_2+\sqrt{2\pi\lambda_2}-0.9535$ or 
$2\sqrt{\pi\lambda_2}+4.5938\le \lambda_1-\lambda_2\le 2\sqrt{2\pi\lambda_2}-0.9218$;
\item $\lambda_1+\lambda_2\equiv \modd{3} {4}$: $\lambda_2<\lambda_1\le\lambda_2+\sqrt{\pi\lambda_2}-1.1958$ or $\lambda_2+\sqrt{\pi\lambda_2}+2.5913\le\lambda_1\le \lambda_2+\sqrt{5\pi\lambda_2}-0.9367$.
\end{itemize}
\end{theorem}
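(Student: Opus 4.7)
The strategy is to combine Theorem~\ref{thas1} with an explicit Taylor expansion of the phase $\phi:=\lambda_1\gamma_1+\lambda_2\gamma_2$ around $r=\lambda_1/\lambda_2=1$. Differentiating the defining equations $\cos\gamma_1=(3r-1)/(2\sqrt 2\,r)$ and $\cos\gamma_2=(r-3)/(2\sqrt 2)$ yields $\gamma_1(1)=\pi/4$, $\gamma_2(1)=-3\pi/4$, $\gamma_1'(1)=-1/2$, $\gamma_2'(1)=1/2$, $\gamma_1''(1)=3/4$, $\gamma_2''(1)=-1/4$, and one further differentiation gives $g'''(1)=3/4$ for $g(r):=r\gamma_1(r)+\gamma_2(r)$. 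Writing $d:=\lambda_1-\lambda_2$, the expansion of $\phi=\lambda_2\,g(1+d/\lambda_2)$ then becomes
$$\phi=-\frac{\pi\lambda_2}{2}+\frac{\pi d}{4}-\frac{d^2}{4\lambda_2}+\frac{d^3}{8\lambda_2^2}+O\!\left(\frac{d^4}{\lambda_2^3}\right).$$
Because $\lambda_1-3\lambda_2\equiv\lambda_1+\lambda_2\pmod 4$, the affine part $\pi(\lambda_1-3\lambda_2)/4$ reduces modulo $\pi$ to $\alpha\pi/4$ with $\alpha:=(\lambda_1+\lambda_2)\bmod 4$, which is precisely why the conclusion splits along $\alpha\in\{0,1,2,3\}$.

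For each residue class I would locate the zeros of the leading approximation $\cos(\alpha\pi/4-d^2/(4\lambda_2))$: they lie at $d=\sqrt{(4k+2)\pi\lambda_2}$ for $\alpha=0$, at $d=\sqrt{(4k+3)\pi\lambda_2}$ for $\alpha=1$, at $d=2\sqrt{k\pi\lambda_2}$ (including $d=0$) for $\alpha=2$, and at $d=\sqrt{(4k+1)\pi\lambda_2}$ for $\alpha=3$. Up to the explicit shifts, these match the endpoints of the ranges stated in the theorem.

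Next, on each admissible subinterval I would lower bound $|\cos\phi|$ and compare with the corresponding piecewise error from Theorem~\ref{thas1}: the coarse estimate $0.0165$ on $d\le\sqrt{8\pi\lambda_2}$, together with refinements $C_m/\sqrt\lambda_2$ on $\sqrt{(m-1)\pi\lambda_2}\le d\le\sqrt{m\pi\lambda_2}$ for $m=1,\dots,8$ with $C_m\in\{1.05882,1.30775,\dots,2.30865\}$. Near a zero $d_\star=\sqrt{m\pi\lambda_2}$ of the leading cosine, setting $d=d_\star+c$ gives $|\cos\phi|\approx|c|\sqrt{m\pi}/(2\sqrt\lambda_2)$, so the condition $|\cos\phi|>C_m/\sqrt\lambda_2$ reduces to $|c|>2C_m/\sqrt{m\pi}$. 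This reproduces, for example, $2\cdot 1.30775/\sqrt{2\pi}\approx 1.043$ (matching the $-1.0443$ near $d=\sqrt{2\pi\lambda_2}$ for $\alpha=0$) and $2\cdot 1.05882/\sqrt{\pi}\approx 1.195$ (matching the $-1.1958$ near $d=\sqrt{\pi\lambda_2}$ for $\alpha=3$). For $\alpha=2$ the zero at $d=0$ yields $\cos\phi\approx\sin(d^2/(4\lambda_2))$, so demanding this to exceed $1.05882/\sqrt\lambda_2$ forces $d>2\sqrt{1.05882}\,\lambda_2^{1/4}=2.0582\,\lambda_2^{1/4}$; Proposition~\ref{propr1} covers the residual range $1\le d\le 701$, explaining the hypothesis $d\ge\max(702,2.0582\lambda_2^{1/4})$.

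The main obstacle is accounting for the cubic term $d^3/(8\lambda_2^2)$, which at $d\sim\sqrt\lambda_2$ contributes a quantity of order $1/\sqrt\lambda_2$---exactly the order of the error bound in Theorem~\ref{thas1}. This shifts the true zero of $\cos\phi$ away from $d_\star$ by an $O(1)$ amount, which is why the margins in the theorem are asymmetric (for instance, $-1.0443$ below versus $+3.1407$ above $d=\sqrt{2\pi\lambda_2}$ in the $\alpha=0$ case). Keeping the cubic contribution explicit, controlling the quartic remainder $O(d^4/\lambda_2^3)$, and then verifying the inequality $|\cos\phi|>(\text{error from Theorem~\ref{thas1}})$ on each prescribed subinterval gives $I(\lambda_2)\ne 0$, and hence by~\eqref{C} we conclude $C_{\lambda_1,\lambda_2}(-1,1)\ne 0$.
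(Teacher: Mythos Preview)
Your strategy is essentially the paper's own: expand $\phi=\lambda_2 g(r)$ with $g(r)=r\gamma_1+\gamma_2$ about $r=1$, reduce the affine part $(\lambda_1-3\lambda_2)\pi/4$ modulo $\pi$ according to $\alpha=(\lambda_1+\lambda_2)\bmod 4$, locate the zeros of the leading cosine, and then compare a lower bound for $|\cos\phi|$ with the piecewise error from Theorem~\ref{thas1}; your heuristics for the constants ($2C_m/\sqrt{m\pi}$ near $d_\star=\sqrt{m\pi\lambda_2}$, the $2.0582\,\lambda_2^{1/4}$ threshold for $\alpha=2$, the asymmetric margins from the cubic term) all match the paper's computations.

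The one technical point where the paper is sharper than your outline: instead of carrying a quartic remainder $O(d^4/\lambda_2^3)$, the paper proves in Lemma~\ref{gamma} the two-sided inequality
\[
0\;\le\; g(r)-(r-3)\frac{\pi}{4}+\frac{(r-1)^2}{4}\;\le\;\frac{(r-1)^3}{8}
\]
exactly, by observing that $g^{(4)}<0$ so $g^{(3)}$ is decreasing with $g^{(3)}(1)=3/4$ and a zero at $r=(9+\sqrt{73})/4$. Thus the cubic term is itself the remainder bound, with a definite sign; this is what turns your outline into the clean two-sided estimate on $\eta=\phi-(\lambda_1-3\lambda_2)\pi/4$ that feeds into Lemma~\ref{sin} and makes the numerical casework go through. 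Otherwise your plan and the paper's proof coincide.
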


We chose to study in Theorem \ref{thr1} what happens before and after the first gap. The results obtained show that we miss at most six values in the first gap, which is quite small.

In the next section we study the case $r>3+2\sqrt2$. We investigate the case $1<r<3+2\sqrt2$ in Section 3, and focus on the case $r$ close to $1$ in Section 4. We end this paper with some remarks and conjectures.

Before starting our studies, let us note that
\begin{equation}\label{deff''}
\aligned
f''(z) &= -\frac{r}{(1+z)^2}-\frac{1}{(1-z)^2}+\frac{1}{z^2}\,,\\
f'''(z) &= \frac{2r}{(1+z)^3}-\frac{2}{(1-z)^3}-\frac{2}{z^3}\,,
\endaligned
\end{equation}
and let us define the even function
$g(\theta)=g(r,\theta):= \Re(f(\rho e^{i\theta}))$ and the odd function
$h(\theta)=h(r,\theta):= \Im(f(\rho e^{i\theta}))$.

Since $\overline{ f(\rho e^{i\theta})}= f(\rho e^{-i\theta})$, we have the useful expressions
\begin{equation}\label{equa1}
\int_{-\pi}^{\pi} \exp(\lambda f(\rho e^{i\theta})) \frac{d\theta}{2\pi}
= \int_{-\pi}^{\pi} \exp(\lambda g(\theta)) \cos\left(\lambda h(\theta)\right)\frac{d\theta}{2\pi}
= \int_0^{\pi} \exp(\lambda g(\theta)) \cos\left(\lambda h(\theta)\right)\frac{d\theta}{\pi}\,.
\end{equation}

\section{The case $r> 3+2\sqrt2$}

\subsection{General properties}

A straightforward calculation shows that
\begin{equation}\label{signrho'}
\rho'(r) = \frac{\sqrt{1-6r+r^2}+1-3r}{2r^2\sqrt{1-6r+r^2}}<0\,,
\end{equation}
and we get in this case
\begin{equation}\label{estrho+}
0<\rho< \rho(3+2\sqrt2)=\sqrt2-1\,.
\end{equation}
From \eqref{deff'} we have 
\begin{equation}\label{estr}
r=\frac{1+\rho}{\rho(1-\rho)}
\end{equation}
and we deduce from \eqref{deff''} and \eqref{estrho+}
\begin{equation}\label{defM}
M =M(r) :=\rho^2f''(\rho)=\frac{r\rho}{(1+\rho)^2}-\frac{\rho}{(1-\rho)^2}
=\frac{1-2\rho-\rho^2}{(1+\rho)(1-\rho)^2}>0\,.
\end{equation}
Another straightforward calculation shows that
\begin{equation}\label{signM'}
M'(r)=-\rho'(r) \frac{1+\rho+5\rho^2+\rho^3}{(1+\rho)^2(1-\rho)^3}>0\,,
\end{equation}
and we get
\begin{equation}\label{estM+}
0<M<1\,.
\end{equation}

Let us now state a key lemma, which shows what kind of estimates are needed and how to use them to get results for $I(\lambda)$.

\begin{lemma}\label{key+} Let $\delta\in\lbrack0,\pi\rbrack$. Assume that
\begin{enumerate}
\item $g(\theta)-g(0)\le -KM\frac{\theta^2}{2}$, for some constant $0<K\le1$,
\item $\left\vert g(\theta)-g(0) +M\frac{\theta^2}{2}\right\vert \le C_g \theta^4$, for some positive constant $C_g$,
\item $\vert h(\theta)\vert\le C_h\vert\theta\vert^3$,  for some positive constant $C_h$,
\end{enumerate}
for $0\le\vert\theta\vert\le\delta$. We then have
$$ \displaylines{\left\vert\frac{\sqrt{2\pi\lambda M}}{\exp\left(\lambda f(\rho)\right)} I(\lambda)-1\right\vert
\hfill\cr\hfill
<\frac{ 3C_g}{\lambda K^{5/2}M^2 }+
\frac{15 C_h^2}{2\lambda M^3}+\frac{\sqrt2 }{\delta\sqrt{\pi\lambda M}}\exp\left(-\lambda M\frac{\delta^2}{2}\right)
+\left(1-\frac{\delta}{\pi}\right)\sqrt{2\pi\lambda M} \exp\left(-KM\frac{\delta^2}{2}\right)\,.}$$
\end{lemma}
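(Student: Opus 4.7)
The approach is the saddle-point (Laplace) method applied to \eqref{equa1}: writing
$$I(\lambda) = \frac{1}{\pi}\int_0^\pi e^{\lambda g(\theta)}\cos(\lambda h(\theta))\,d\theta$$
and noting that $f(\rho)=g(0)$ (since $h$ is odd), the plan is to split the integral at $\theta=\delta$ into a near-saddle piece on $[0,\delta]$ and a tail piece on $[\delta,\pi]$, then compare the near piece to the Gaussian model $G(\lambda)=e^{\lambda g(0)}/\sqrt{2\pi\lambda M}$. The normalization $\sqrt{2\pi\lambda M}/e^{\lambda g(0)}$ in the statement converts everything into dimensionless error terms.

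The tail piece is immediate: using $|\cos(\lambda h)|\le 1$ and extending condition (1) to $[\delta,\pi]$ (via the monotonicity of $g$ that holds in the intended application), one obtains $g(\theta)\le g(0)-KM\delta^{2}/2$ there, which produces the last term after normalization. The gap between $G(\lambda)$ restricted to $[0,\delta]$ and the full Gaussian is the standard tail estimate $\int_\delta^\infty e^{-\lambda M\theta^{2}/2}\,d\theta\le \frac{1}{\lambda M\delta}e^{-\lambda M\delta^{2}/2}$, yielding the third term.

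The heart of the argument is the comparison on $[0,\delta]$. Factor the integrand as $e^{\lambda g(0)}e^{-\lambda M\theta^{2}/2}\cdot e^{A}\cos B$, with $A=\lambda(g(\theta)-g(0)+M\theta^{2}/2)$ and $B=\lambda h(\theta)$, and use the additive decomposition
$$e^{A}\cos B-1=(e^{A}-1)\cos B+(\cos B-1),$$
which gives $|e^{A}\cos B-1|\le |e^{A}-1|+|\cos B-1|$. For the first summand, conditions (1) and (2) together imply $A\le \lambda(1-K)M\theta^{2}/2$ together with $|A|\le \lambda C_g\theta^{4}$, whence $|e^{A}-1|\le \lambda C_g\theta^{4}\,e^{\lambda(1-K)M\theta^{2}/2}$; multiplication by $e^{-\lambda M\theta^{2}/2}$ collapses this to $\lambda C_g\theta^{4}\,e^{-\lambda KM\theta^{2}/2}$. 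For the second summand, condition (3) yields $|\cos B-1|\le B^{2}/2\le \lambda^{2} C_h^{2}\theta^{6}/2$, and the factor $e^{-\lambda M\theta^{2}/2}$ is preserved untouched. Extending each integrand to $[0,\infty)$ (which only increases the bound) and evaluating the Gaussian moments $\int_0^\infty \theta^{2k}e^{-a\theta^{2}}\,d\theta=\Gamma(k+\tfrac{1}{2})/(2a^{k+1/2})$ for $k=2$ and $k=3$, with $a=\lambda KM/2$ and $a=\lambda M/2$ respectively, then multiplying by $\sqrt{2\pi\lambda M}/e^{\lambda g(0)}$, reproduces exactly the first two terms $\frac{3C_g}{\lambda K^{5/2}M^{2}}$ and $\frac{15C_h^{2}}{2\lambda M^{3}}$.

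The main subtlety—and the reason the two principal error terms depend so asymmetrically on $K$—is the decision to decompose $e^{A}\cos B-1$ additively rather than multiplicatively. A bound of the shape $|\cos B||e^{A}-1|+|e^{A}||\cos B-1|$ would force the $C_h^{2}$-contribution to inherit the slower decay $e^{-\lambda KM\theta^{2}/2}$, producing a spurious $K^{7/2}$ in the denominator of the second term. By isolating the two error sources, only the $e^{A}$ factor spends part of the Gaussian budget to absorb its own growth, while the cosine error keeps the full decay $e^{-\lambda M\theta^{2}/2}$; this is the key technical trick that makes the bound sharp in its $M$-dependence.
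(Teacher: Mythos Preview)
Your proposal is correct and follows essentially the same route as the paper. Your additive decomposition $e^{A}\cos B-1=(e^{A}-1)\cos B+(\cos B-1)$, bounded by $|A|e^{\max(A,0)}+B^{2}/2$, is exactly the paper's inequality \eqref{expcos}, and your treatment of the Gaussian tail and the far-tail integral matches the paper's line by line. The only point worth flagging is that the monotonicity of $g$ on $[0,\pi]$ is not merely assumed from ``the intended application'': the paper establishes it inside the proof from the explicit formula $g'(\theta)=-\sin\theta\cdot\bigl(2\rho(1+2\rho-\rho^{2})(1-\cos\theta)+(1-\rho^{2})(1-2\rho-\rho^{2})\bigr)\big/\bigl((1-\rho)((1+\rho^{2})^{2}-4\rho^{2}\cos^{2}\theta)\bigr)\le0$, so the lemma is self-contained rather than relying on an external fact.
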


\begin{proof} Note that
\begin{equation}\label{expcos}
\left\vert e^x\cos y-1\right\vert\le \left\vert \left(e^x-1\right)\cos y\right\vert +1-\cos y\le  \vert x\vert e^{\max(x,0)}+\frac{y^2}{2}
\end{equation}
for any real numbers $x$ and $y$. 

Let us use this property with $x=\lambda \left(g(\theta)-g(0)+M\frac{\theta^2}{2}\right)$ and $y=\lambda h(\theta)$. 
Condition 1 provides the upper bound $\max(x,0)\le \lambda (1-K)M\frac{\theta^2}{2}$.
Conditions 2 and 3  imply $\vert x\vert\le \lambda C_g\theta^4$ and  $\vert y\vert\le \lambda C_h\vert \theta\vert^3$, respectively.
We now deduce  from \eqref{expcos} and these bounds:
$$\displaylines{ \exp\left(-\lambda f(\rho)\right)
  \left\vert\int_{-\delta}^{\delta} \exp(\lambda f(\rho e^{i\theta})) \frac{d\theta}{2\pi} -\int_{-\delta}^{\delta} \exp\left(\lambda\left(f(\rho)-M\frac{\theta^2}{2}\right) \right)
 \frac{d\theta}{2\pi}\right\vert
\hfill\cr\hfill
\aligned
&=    \left\vert \int_{-\delta}^{\delta} \left(  e^x\cos y-1\right) \exp\left(-\lambda M\frac{\theta^2}{2}\right)  \frac{d\theta}{2\pi}\right\vert\\
&\le \int_{-\delta}^{\delta}\lambda C_g\theta^4  \exp\left(-\lambda KM\frac{\theta^2}{2} \right)  \frac{d\theta}{2\pi}
+\int_{-\delta}^{\delta} \frac{\lambda^2C_h^2\theta^6}{2}\exp\left(-\lambda M\frac{\theta^2}{2} \right)  \frac{d\theta}{2\pi}\\
&< \frac{\lambda C_g\Gamma(5/2)}{2\pi(\lambda KM/2)^{5/2}}+ \frac{\lambda^2C_h^2\Gamma(7/2)}{4\pi(\lambda M/2)^{7/2}}
=\frac{3C_g}{\sqrt{2\pi}\lambda^{3/2}(KM)^{5/2}}+\frac{15C_h^2}{2\sqrt{2\pi}\lambda^{3/2}M^{7/2}}\,.
\endaligned}$$
Since
$$\int_{\delta\le \vert\theta\vert\le+\infty} \exp\left(-\lambda M\frac{\theta^2}{2}\right) \frac{d\theta}{2\pi}
\le \int_{\delta}^{+\infty} \frac{\theta}{\delta} \exp\left(-\lambda M\frac{\theta^2}{2}\right) \frac{d\theta}{\pi}
 =\frac{ \exp\left(-\lambda M\frac{\delta^2}{2}\right) }{\delta\lambda M\pi}$$
and 
$$ \int_{-\infty}^{+\infty} \exp\left(-\lambda M\frac{\theta^2}{2}\right) \frac{d\theta}{2\pi}=\frac{1}{\sqrt{2\pi\lambda M}}\,,$$
we obtain
$$ \left\vert\frac{\sqrt{2\pi\lambda M}}{\exp\left(\lambda f(\rho)\right)}
\int_{-\delta}^{\delta} \exp(\lambda f(\rho e^{i\theta})) \frac{d\theta}{2\pi}-1\right\vert
<\frac{ 3C_g}{\lambda K^{5/2}M^2 }+
\frac{15 C_h^2}{2\lambda M^3}+
\frac{\sqrt2 }{\delta\sqrt{\pi\lambda M}}\exp\left(-\lambda M\frac{\delta^2}{2}\right)\,.$$

To deal with the remaining integral $\int_{\delta\le\vert\theta\vert\le\pi}\exp(\lambda f(\rho e^{i\theta})) \frac{d\theta}{2\pi}$, we first show that $g$ is increasing on $\lbrack-\pi,0\rbrack$ and decreasing on $\lbrack0,\pi\rbrack$.
From the definition 
$$g(\theta)=\Re\left(  f\left(\rho e^{i\theta}\right) \right) =r\log\left\vert1+\rho e^{i\theta}\right\vert+\log\left\vert1-\rho e^{i\theta}\right\vert-\log\rho$$
and \eqref{estr} we deduce 
$$g(\theta)=\frac{1+\rho}{2\rho(1-\rho)}\log\left(1+2\cos\theta \rho+\rho^2\right)+\frac{1}{2}\log\left(1-2\cos\theta \rho+\rho^2\right)-\log\rho$$
and
$$\begin{aligned}
g'(\theta)&=-\frac{1+\rho}{2\rho(1-\rho)}\times\frac{2\rho\sin\theta}{1+2\rho\cos\theta+\rho^2}+\frac{1}{2}\times\frac{2\rho\sin\theta}{1-2\rho\cos\theta+\rho^2}\\
&=-\frac{\sin\theta( 2\rho(1+2\rho-\rho^2)(1-\cos\theta)+(1-\rho^2)(1-2\rho-\rho^2))}{(1-\rho)((1+\rho^2)^2-4\rho^2\cos^2\theta)}\le0\,,
\end{aligned}$$
which proves this intermediate result. We therefore have
$$\displaylines{
\frac{1}{\exp\left(\lambda f(\rho)\right)}
\left\vert \int_{\delta\le\vert\theta\vert\le\pi} \exp(\lambda f(\rho e^{i\theta})) \frac{d\theta}{2\pi}\right\vert
\le \int_{\delta\le\vert\theta\vert\le\pi} \exp\left(\lambda(g(\theta)-g(0))\right) \frac{d\theta}{2\pi}
\hfill\cr\hfill
\le  \frac{\pi-\delta}{2\pi}\left(\exp\left(\lambda(g(\delta)-g(0))\right)+\exp\left(\lambda(g(-\delta)-g(0))\right)\right)
\le  \frac{\pi-\delta}{\pi}\exp\left(-KM\frac{\delta^2}{2}\right)\,,}$$
and the lemma follows.
\end{proof}

\subsection{Estimates for $g$ and $h$}

In this subsection, we obtain explicit versions of Conditions 1-3 in Lemma \ref{key+}. We shall present two kinds of inequalities: a general inequality valid for any $r>3+2\sqrt2$, and a more precise one only valid when $r$ is close to $3+2\sqrt2$. The first ones will provide applications when $r$ is large enough, and the second ones when $r$ is close to $3+2\sqrt2$.

\begin{lemma}\label{r+g} For $r>3+2\sqrt2$ and $\vert\theta\vert\le\pi$, we have
$$g(\theta)-g(0)\le  -\frac{2}{\pi^2} M\theta^2\,.$$
For $3+2\sqrt2<r\le7.686899$ and $\vert\theta\vert\le\pi/3$, we have
$$g(\theta)-g(0)<-M\frac{\theta^2}{2}\,.$$
\end{lemma}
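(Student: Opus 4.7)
The plan is to reparametrise the problem through $t = 1-\cos\theta$. Using the explicit expression for $g$ derived in the proof of Lemma~\ref{key+}, one verifies the identity
$$ g(\theta) - g(0) = G(1-\cos\theta),\qquad G(t) := \frac{1+\rho}{2\rho(1-\rho)}\log(1-at) + \frac{1}{2}\log(1+bt), $$
with $a := 2\rho/(1+\rho)^2$ and $b := 2\rho/(1-\rho)^2$. Direct differentiation gives $G(0)=0$, $G'(0)=-M$, and
$$ G''(t) = -\frac{a}{(1-\rho^2)(1-at)^2} - \frac{\rho b}{(1-\rho)^2(1+bt)^2} < 0, $$
so $G$ is strictly concave on the relevant range $[0,2]$.

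For the first inequality, concavity of $G$ combined with $G(0)=0$ and $G'(0)=-M$ yields the tangent--line bound $G(t) \le -Mt$. Together with the elementary estimate $1-\cos\theta \ge 2\theta^2/\pi^2$ valid on $[-\pi,\pi]$ (checked by observing that the difference vanishes at $\theta=0,\pm\pi$ and that its derivative has a single interior zero on each of $[0,\pi]$ and $[-\pi,0]$), this produces
$$ g(\theta)-g(0) = G(1-\cos\theta) \le -M(1-\cos\theta) \le -\frac{2M}{\pi^2}\theta^2. $$

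For the second inequality the tangent--line bound is insufficient: on $[-\pi/3,\pi/3]$ one has $1-\cos\theta \le \theta^2/2$, so $-M(1-\cos\theta)\ge -M\theta^2/2$ goes the wrong way. I would instead apply the second-order Taylor formula with Lagrange remainder, writing $G(t) = -Mt + \tfrac12 G''(\xi)\,t^2$ for some $\xi \in [0,t]$, which after substituting $t=1-\cos\theta$ rearranges as
$$ g(\theta)-g(0) + M\frac{\theta^2}{2} = M\Bigl(\frac{\theta^2}{2} - (1-\cos\theta)\Bigr) + \tfrac12\,G''(\xi)(1-\cos\theta)^2. $$
The target inequality then becomes a lower bound on $|G''(\xi)|$, namely
$$ |G''(\xi)| > \frac{2M\bigl(\tfrac12\theta^2 - (1-\cos\theta)\bigr)}{(1-\cos\theta)^2}. $$
A direct monotonicity check shows that the right-hand side is maximised at $\theta = \pi/3$ on $(0,\pi/3]$, yielding an explicit constant multiple of $M$. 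For the left-hand side I would decompose $|G''(\xi)|$ into its two terms -- the first increasing in $\xi$, the second decreasing -- and minimise each separately on $\xi \in [0,1/2]$. The problem then reduces to a rational inequality in $\rho$ alone, to be verified on the closed sub-interval $[\rho(7.686899),\,\sqrt 2 - 1)$.

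The main obstacle is the second part. Because $G'''$ changes sign as $\rho$ varies, $G''$ is not monotone in $\xi$, so one cannot simply replace $G''(\xi)$ by a single benchmark value; the two constituent summands of $G''$ must be handled separately on $[0,1/2]$. Balancing the resulting bound against the slack term $M\bigl(\tfrac12\theta^2 - (1-\cos\theta)\bigr)$ is what fixes the specific numerical cutoff $7.686899$: it is essentially the largest value of $r$ for which the negative quadratic Taylor remainder in $G$ suffices to overcome this positive slack uniformly on $[0,\pi/3]$.
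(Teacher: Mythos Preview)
For the first inequality, your argument via concavity of $G$ and the tangent-line bound $G(t)\le -Mt$ coincides with the paper's: the paper applies $\log(1+x)\le x$ to each logarithm, which is exactly this tangent bound, and then uses $\sin(\theta/2)\ge \theta/\pi$, equivalent to your $1-\cos\theta\ge 2\theta^2/\pi^2$.

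For the second inequality the routes diverge. The paper pushes the Taylor expansion of the logarithm one order further, using $\log(1+x)\le x-\tfrac{x^2}{2}+\tfrac{x^3}{3}$ on both terms, and combines this with an explicit auxiliary bound for $\cos\theta-1+\tfrac{\theta^2}{2}$ in terms of $(1-\cos\theta)^2$ and $(1-\cos\theta)^3$. This yields
\[
g(\theta)-g(0)\le -M\tfrac{\theta^2}{2}+c_1(\rho)(1-\cos\theta)^2+c_2(\rho,\theta)(1-\cos\theta)^3
\]
with explicit rational $c_1,c_2$, and the endgame is the single sign check $c_1(\rho)+\tfrac12 c_2(\rho,\pi/3)<0$ on the relevant $\rho$-interval. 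Your Lagrange-remainder approach is conceptually tidier but trades explicitness for an extra layer of estimation: you must lower-bound $\min_{[0,1/2]}|G''|$ (your separate minimisation of the two summands is a real loss here) and upper-bound the slack ratio by its value at $\theta=\pi/3$, then compare. Both reductions end in a one-variable numerical verification over $\rho$; the paper's is a polynomial sign check, yours a comparison of two rational expressions.

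Two remarks. First, your plan is viable and in fact covers more than the stated range: a quick computation shows your crude lower bound on $|G''|$ beats the slack term out to roughly $r\approx 8.7$. So your closing claim that this method ``fixes the specific numerical cutoff $7.686899$'' overreaches; that particular constant is an artefact of the paper's specific third-order log bound and auxiliary identity, not a threshold intrinsic to the problem or to your argument. Second, you leave the final rational inequality in $\rho$ as ``to be verified'' without doing so; the paper does carry its corresponding check through.
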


\begin{proof} From the definition of $g$, we get
\begin{equation}\label{exprg}
\aligned
g(\theta)-g(0)&=\frac{r}{2}\log\left(\frac{1+2\rho\cos\theta+\rho^2}{(1+\rho)^2}\right)+\frac{1}{2}\log\left(\frac{1-2\rho\cos\theta+\rho^2}{(1-\rho)^2}\right)\\
&=\frac{r}{2}\log\left(1-\frac{2\rho(1-\cos\theta)}{(1+\rho)^2}\right)+\frac{1}{2}\log\left(1+\frac{2\rho(1-\cos\theta)}{(1-\rho)^2}\right)\,.
\endaligned
\end{equation}
Without loss of generality we may assume $\theta>0$. 
Since  $\log(1+x)\le x$ for $x>-1$, we deduce from \eqref{exprg} the upper bound:
\begin{equation}\label{boundg}
g(\theta)-g(0)\le\left( \frac{\rho}{(1-\rho)^2}-\frac{r\rho}{(1+\rho)^2}\right) (1-\cos\theta)=-2M\sin^2\left(\frac{\theta}{2}\right)
\le -2M\left(\frac{\theta}{\pi}\right)^2\,,
\end{equation}
which proves the first part of the lemma.

From  
 $\log(1+x)\le x-\frac{x^2}{2}+\frac{x^3}{3}$ for $x>-1$, we deduce from\eqref{defM} and  \eqref{exprg}
 \begin{equation}\label{supg}
\aligned
 g(\theta)-g(0)&\le -M(1-\cos\theta) -\left( \frac{\rho^2}{(1-\rho)^4}+\frac{r\rho^2}{(1+\rho)^4}\right) (1-\cos\theta)^2\\
&+ \left( \frac{\rho^3}{(1-\rho)^6}-\frac{r\rho^3}{(1+\rho)^6}\right) \frac{4(1-\cos\theta)^3}{3}\,.\endaligned 
\end{equation}
Let us put 
$$\varphi_1(\theta):=\cos\theta-1+\frac{\theta^2}{2}-\frac{(1-\cos\theta)^2}{6}-\frac{2(1-\cos\theta)^3}{45}\,,$$
so that $\varphi_1(0)=\varphi'_1(0)=0$ and $\varphi''_1(\theta)=\frac{2(1-\cos\theta)^3}{5}$. By Taylor's formula and using the parity of $\varphi_1$, there exists $t_{\theta}\in\lbrack0,\vert\theta\vert\rbrack$
such that
\begin{equation}\label{phi1}
0\le \varphi_1(\theta) = \frac{2(1-\cos t_{\theta})^3}{5}\times\frac{\theta^2}{2}\le \frac{\theta^2(1-\cos\theta)^3}{5}\,.
\end{equation}
From \eqref{supg}, we get
$$\aligned
g(\theta)-g(0)&\le M\left( -\frac{\theta^2}{2}+\frac{(1-\cos\theta)^2}{6}+\frac{2(1-\cos\theta)^3}{45}+\frac{\theta^2(1-\cos\theta)^3}{5}\right)\\
&-\left( \frac{\rho^2}{(1-\rho)^4}+\frac{r\rho^2}{(1+\rho)^4}\right) (1-\cos\theta)^2+ \left( \frac{\rho^3}{(1-\rho)^6}-\frac{r\rho^3}{(1+\rho)^6}\right) \frac{4(1-\cos\theta)^3}{3}\\
&= -M\frac{\theta^2}{2} +c_1(\rho) (1-\cos\theta)^2+c_2(\rho,\theta)(1-\cos\theta)^3\,.
\endaligned$$
with
$$\aligned
c_1(\rho) :&=\frac{M}{6}-\left( \frac{\rho^2}{(1-\rho)^4}+\frac{r\rho^2}{(1+\rho)^4}\right)
=\frac{1-8\rho+9\rho^2-32\rho^3-9\rho^4-8\rho^5-\rho^6}{6(1+\rho)^3(1-\rho)^4}\\
c_2(\rho,\theta):&=M\left(\frac{2}{45}+\frac{\theta^2}{5}\right)+\frac{4}{3}\left( \frac{\rho^3}{(1-\rho)^6}-\frac{r\rho^3}{(1+\rho)^6}\right)
\,.\endaligned$$
For $3+2\sqrt2 <r \le 7.686899$, we have $ 0.19186222\le\rho(r)< \sqrt2-1$, and we check that 
$$c_1(\rho)+(1-\cos\theta)c_2(\rho,\theta)\le c_1(\rho)+\frac{c_2(\rho,\pi/3)}{2}<0$$
for $\vert \theta\vert\le \pi/3$. The lemma follows.
\end{proof}

\begin{lemma}\label{r+g1} Let $c\in\lbrack -1,1\rbrack$. For $r>3+2\sqrt2$ and  $\cos\theta\ge c$, we have 
$$\left\vert g(\theta)-g(0)+M\frac{\theta^2}{2}\right\vert \le  C_g\theta^4\,,$$
with
$$C_g:=\max\left(\frac{1-2\rho-\rho^2}{24(1+\rho)(1-\rho)^2},\frac{\rho^4+6\rho^3+2\rho^2+4\rho-1}{24(1+\rho)(1-\rho)^4} 
+\frac{\rho}{4(1-\rho^2)(1+\rho^2+2\rho c)}\right)\,.$$
Moreover, for $r\le 7.494$, we have
$$C_g=\frac{\rho^4+6\rho^3+2\rho^2+4\rho-1}{24(1+\rho)(1-\rho)^4} +\frac{\rho}{4(1-\rho^2)(1+\rho^2+2\rho c)}\,.$$
\end{lemma}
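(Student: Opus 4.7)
The strategy is to decompose $g(\theta)-g(0)+M\theta^2/2$ into two pieces of opposite sign whose magnitudes can be bounded separately, then combine them with attention to the partial cancellation.

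Setting $u=1-\cos\theta$, $a=2\rho/(1-\rho)^2$ and $b=2\rho/(1+\rho)^2$, equation \eqref{exprg} rewrites as
$$g(\theta)-g(0)+\tfrac{M\theta^2}{2} \;=\; \phi(u) + M\bigl(\tfrac{\theta^2}{2}-u\bigr),$$
where $\phi(u):=\tfrac{r}{2}\log(1-bu)+\tfrac{1}{2}\log(1+au)+Mu$. First I would verify from \eqref{defM} that $M=(rb-a)/2$, whence $\phi(0)=0$ and $\phi'(0)=0$, and then compute $\phi''(v)=-a^2/(2(1+av)^2)-rb^2/(2(1-bv)^2)<0$ (valid since $bu\le b<1$). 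By concavity, $\phi(u)\le 0$ for $u\ge 0$.

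To bound $|\phi(u)|$ I would use the integral remainder $\phi(u)=\int_0^u(u-v)\phi''(v)\,dv$. The two elementary integrals evaluate to $\int_0^u(u-v)/(1+av)^2\,dv=(au-\log(1+au))/a^2$ and $\int_0^u(u-v)/(1-bv)^2\,dv=(-bu-\log(1-bu))/b^2$, and the estimates $au-\log(1+au)=\int_0^{au}t/(1+t)\,dt\le (au)^2/2$ and $-bu-\log(1-bu)=\int_0^{bu}s/(1-s)\,ds\le(bu)^2/(2(1-bu))$ give
$$|\phi(u)| \;\le\; \frac{a^2 u^2}{4}+\frac{rb^2 u^2}{4(1-bu)}.$$
Since $\cos\theta\ge c$ implies $u\le 1-c$, hence $1-bu\ge(1+\rho^2+2\rho c)/(1+\rho)^2$, and $u\le\theta^2/2$ implies $u^2\le\theta^4/4$, the algebraic identity $r\rho^2/(1+\rho)^2=\rho/(1-\rho^2)$ (a consequence of \eqref{estr}) yields
$$|\phi(u)| \;\le\; \theta^4\left[\frac{\rho^2}{4(1-\rho)^4}+\frac{\rho}{4(1-\rho^2)(1+\rho^2+2\rho c)}\right].$$

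For the second piece, $M(\theta^2/2-u)=M(\cos\theta-1+\theta^2/2)=M\sum_{k\ge 2}(-1)^k\theta^{2k}/(2k)!$. Since $\pi^2<30$, the successive terms decrease in absolute value on $|\theta|\le\pi$, so the alternating-series estimate gives $0\le M(\theta^2/2-u)\le M\theta^4/24$.

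The combining step uses the opposite-sign structure: writing $g(\theta)-g(0)+M\theta^2/2=M(\theta^2/2-u)-|\phi(u)|$, a case analysis (depending on which quantity dominates) together with the two preceding bounds yields
$$|g(\theta)-g(0)+\tfrac{M\theta^2}{2}| \;\le\; \max\Bigl(\tfrac{M\theta^4}{24},\;\theta^4\!\cdot\![\text{Step 2 bound}]-\tfrac{M\theta^4}{24}\Bigr),$$
the $-M\theta^4/24$ in the second argument recording the cancellation with $M(\theta^2/2-u)$ in the regime where $|\phi(u)|$ dominates. The algebraic identity
$$\frac{\rho^2}{4(1-\rho)^4}-\frac{M}{24}=\frac{\rho^4+6\rho^3+2\rho^2+4\rho-1}{24(1+\rho)(1-\rho)^4}$$
(a direct polynomial expansion using $M=(1-2\rho-\rho^2)/((1+\rho)(1-\rho)^2)$) then converts this to the announced form of $C_g$.

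For the second assertion ($r\le 7.494$), it suffices to show that the second argument of the max exceeds the first throughout this range, i.e., that $\rho^2/(4(1-\rho)^4)+\rho/(4(1-\rho^2)(1+\rho^2+2\rho c))\ge M/12$. The left side is minimized at $c=1$, where it becomes $\rho^2/(4(1-\rho)^4)+\rho/(4(1-\rho)(1+\rho)^3)$; clearing denominators and comparing polynomials in $\rho$ against $M/12$ reduces to a one-variable inequality verified by elementary means on $[\rho(7.494),\sqrt{2}-1)$.

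The main technical obstacle is the sharp bound in Step 2 together with the case analysis that extracts the $-M/24$ saving in the combination step; the other steps are routine Taylor estimates and algebraic simplifications.
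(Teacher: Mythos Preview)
Your decomposition and the bounds on $\phi(u)$ and on $M(\theta^2/2-u)$ are essentially the paper's argument, but the combination step has a genuine gap. You write the expression as $M(\theta^2/2-u)-|\phi(u)|$ and then claim that in the regime where $|\phi(u)|$ dominates you may subtract $M\theta^4/24$. But to bound $|\phi(u)|-M(\theta^2/2-u)$ from above you need a \emph{lower} bound on $M(\theta^2/2-u)$, and the only one your alternating-series estimate provides is $M(\theta^2/2-u)\ge 0$; the inequality $M(\theta^2/2-u)\le M\theta^4/24$ goes the wrong way. With your stated bounds alone the case analysis yields only $\max\bigl(M\theta^4/24,\ \theta^4\cdot[\text{Step 2 bound}]\bigr)$, which is strictly weaker than the claimed $C_g$.

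The missing ingredient is the sharper lower bound $\tfrac{\theta^2}{2}-u\ge \tfrac{u^2}{6}$, equivalently $\cos\theta-1+\tfrac{\theta^2}{2}\ge\tfrac{(1-\cos\theta)^2}{6}$ (this is the paper's $\varphi_1$ estimate, and follows e.g.\ from $\varphi_1''(\theta)\ge 0$). With it, the subtraction must be performed at the level of $u^2$ before passing to $\theta^4$: one gets $|\phi(u)|-M(\theta^2/2-u)\le u^2\bigl(A-\tfrac{M}{6}\bigr)$ where $A=\tfrac{\rho^2}{(1-\rho)^4}+\tfrac{\rho}{(1-\rho^2)(1+\rho^2+2\rho c)}$, and then $u^2\le\theta^4/4$ produces exactly the second argument of the max. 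Everything else in your proposal, including the treatment of the case $r\le 7.494$, is correct and matches the paper.
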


\begin{proof} It follows from \eqref{boundg} that $g(\theta)-g(0)\le M(\cos\theta-1)$.
The upper bound 
\begin{equation}\label{upg}
g(\theta)-g(0)+M\frac{\theta^2}{2}\le M\frac{\theta^4}{24}=\frac{1-2\rho-\rho^2}{24(1+\rho)(1-\rho)^2}\theta^4
\end{equation}
follows from \eqref{defM} and from $\cos\theta\le 1-\frac{\theta^2}{2}+\frac{\theta^4}{24}$.

Since $\log(1-x)\ge -x-\frac{x^2}{2(1-x)}$ and $\log(1+x)\ge x-\frac{x^2}{2}$ for $x\in(0,1)$, we deduce from \eqref{exprg} the lower bound
$$g(\theta)-g(0)\ge  -M(1-\cos\theta)
-\left( \frac{r\rho^2}{(1+\rho)^4(1-\frac{2\rho(1-\cos\theta)}{(1+\rho)^2})}+\frac{\rho^2}{(1-\rho)^4}\right)(1-\cos\theta)^2\,.$$
From \eqref{phi1} we get
$$\cos\theta-1\ge -\frac{\theta^2}{2}+\frac{(1-\cos\theta)^2}{6}+ \frac{2(1-\cos\theta)^3}{45}\ge-\frac{\theta^2}{2}+\frac{(1-\cos\theta)^2}{6}\,,$$
and we deduce
$g(\theta)-g(0)+M\frac{\theta^2}{2} \ge -\varphi_2(\rho,c)(1-\cos\theta)^2$,
with 
$$\begin{aligned}
\varphi_2(\rho,c):&= -\frac{M}{6}+\frac{r\rho^2}{(1+\rho)^4\left(1-\frac{2\rho(1-c)}{(1+\rho)^2}\right)}+\frac{\rho^2}{(1-\rho)^4}\\
&=\frac{\rho^4+6\rho^3+2\rho^2+4\rho-1}{6(1+\rho)(1-\rho)^4} 
+\frac{\rho}{(1-\rho^2)(1+\rho^2+2\rho c)}\,.
\end{aligned}$$
From $(1-\cos\theta)^2\le\theta^4/4$, we obtain $g(\theta)-g(0)+M\frac{\theta^2}{2} \ge -\max\left(\varphi_2(\rho,c),0\right)\frac{\theta^4}{4}$ and therefore
$$\left\vert g(\theta)-g(0)+M\frac{\theta^2}{2}\right\vert \le \max\left(\frac{\varphi_2(\rho,c)}{4},0,\frac{1-2\rho-\rho^2}{24(1+\rho)(1-\rho)^2}\right)\theta^4= C_g\theta^4\,.$$

We note that
$$\varphi_2(\rho,c)-\frac{M}{6}\ge \varphi_2(\rho,1)-\frac{M}{6}=\frac{\rho^6+5\rho^5+3\rho^4+14\rho^3-3\rho^2+5\rho-1}{2(1-\rho)^4(1+\rho)^3}\ge0$$
for $\rho\ge 0.2002734$. The second part of the lemma follows.
\end{proof}

\begin{lemma} \label{r+h} For $r>3+2\sqrt2$ and  $\cos\theta\ge c$, we have 
$\left\vert h(\theta)\right\vert \le C_h\vert\theta\vert^3$ with
$$C_h=C_h(r,c):=\max\left(\frac{(1+\rho^2)(\rho^2+4\rho-1)}{6(1-\rho)^3(1+\rho)^2},\frac{(1+\rho^2)(1-2\rho(1+c)-\rho^2)}{6(1-\rho)((1+\rho^2)^2-4\rho^2c^2)}\right)\,.$$
Moreover, for $r\le 6.537$ and $c\ge1/2$, we have
$$C_h= \frac{(1+\rho^2)(\rho^2+4\rho-1)}{6(1-\rho)^3(1+\rho)^2}\,.$$
\end{lemma}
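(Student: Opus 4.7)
The plan is to derive a closed-form expression for $h'(\theta)$, bound it by $\theta^{2}/2$ times an explicit function $\phi(u)$ of $u=\cos\theta$, and integrate to obtain a $|\theta|^{3}$ bound on $h(\theta)$; the constant $C_{h}$ will then come out as the maximum of $\phi/6$ on the admissible interval $u\in[c,1]$.

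First I would compute $h(\theta)=\Im f(\rho e^{i\theta})=r\arg(1+\rho e^{i\theta})+\arg(1-\rho e^{i\theta})-\theta$ and differentiate to get
$$h'(\theta)=\frac{r\rho(\cos\theta+\rho)}{1+2\rho\cos\theta+\rho^{2}}-\frac{\rho(\cos\theta-\rho)}{1-2\rho\cos\theta+\rho^{2}}-1.$$
Reducing to a common denominator, the numerator is a polynomial in $\cos\theta$ that vanishes at $\cos\theta=1$, because the saddle-point identity $f'(\rho)=0$ is equivalent to $r\rho(1-\rho)=1+\rho$ by \eqref{estr}. Factoring out $\cos\theta-1$ and substituting this identity gives the compact expression
$$h'(\theta)=-\frac{(1-\cos\theta)(1+\rho^{2})(1-2\rho-\rho^{2}-2\rho\cos\theta)}{(1-\rho)\bigl((1+\rho^{2})^{2}-4\rho^{2}\cos^{2}\theta\bigr)}.$$

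Using $1-\cos\theta\le\theta^{2}/2$ I would bound $|h'(\theta)|\le(\theta^{2}/2)\,\phi(\cos\theta)$ with
$$\phi(u):=\frac{(1+\rho^{2})\,|1-2\rho-\rho^{2}-2\rho u|}{(1-\rho)\bigl((1+\rho^{2})^{2}-4\rho^{2}u^{2}\bigr)},$$
integrate to get $|h(\theta)|\le(|\theta|^{3}/6)\sup_{u\in[c,1]}\phi(u)$, and then identify that supremum. Writing $\psi(u):=(1-2\rho-\rho^{2}-2\rho u)/((1+\rho^{2})^{2}-4\rho^{2}u^{2})$, I would compute
$$\psi'(u)=-\frac{2\rho\bigl((1+\rho^{2})^{2}+4\rho^{2}u^{2}-4\rho u(1-2\rho-\rho^{2})\bigr)}{\bigl((1+\rho^{2})^{2}-4\rho^{2}u^{2}\bigr)^{2}}$$
and observe that the discriminant of the quadratic in the numerator equals $-64\rho^{3}(1-\rho^{2})<0$, so the quadratic stays strictly positive and $\psi$ is strictly decreasing on $[-1,1]$. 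Consequently $|\psi|$ is either monotone or V-shaped about a single zero, so its maximum on $[c,1]$ is attained at one of the endpoints $u=1$ or $u=c$; evaluating $\phi$ at those two values gives precisely the two expressions in the max defining $C_{h}$.

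For the \emph{moreover} clause, I would verify that the first candidate dominates under the stated restrictions. Since $\psi$ is strictly monotone on $[1/2,1]$, the map $c\mapsto|\psi(c)|$ on that range reaches its maximum at $c=1/2$ (or trivially at $c=1$), so the binding comparison is $|\psi(1)|\ge|\psi(1/2)|$. For $r\le 6.537$, which forces $\rho\ge\rho(6.537)\approx 0.261>\sqrt{5}-2$, the quantity $\rho^{2}+4\rho-1$ is positive, ensuring that the first candidate is the correct non-negative bound; the inequality $|\psi(1)|\ge|\psi(1/2)|$ then reduces, after clearing denominators, to an explicit polynomial inequality in $\rho$ alone, which I would verify on $\rho\in[\rho(6.537),\sqrt{2}-1)$ via sign analysis. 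The main technical obstacle lies exactly in this last step: the two candidates are numerically close near the transition $r\approx 6.537$, so the constant in the lemma must be pinned down through careful handling of a low-degree polynomial in $\rho$.
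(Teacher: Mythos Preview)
Your approach is essentially the paper's: compute $h'(\theta)$, extract the factor $1-\cos\theta\le\theta^2/2$, bound the remaining bracket over $\cos\theta\in[c,1]$, and integrate. The paper obtains the endpoint extrema by bounding the two summands of
\[
B(\theta)=\frac{\rho(1+\rho)}{(1-\rho)(1+\rho^{2}-2\rho\cos\theta)}-\frac{1}{1+\rho^{2}+2\rho\cos\theta}
\]
separately (both are monotone in $\cos\theta$), whereas you combine them into $\psi$ and prove monotonicity via the discriminant $-64\rho^{3}(1-\rho^{2})<0$. This is a pleasant alternative and gives the same endpoints.

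One point needs a sentence of justification. Your $\phi(1)$ and $\phi(c)$ carry absolute values, so what you prove directly is $|h(\theta)|\le\frac{|\theta|^{3}}{6}\max(|\varphi_{3}|,|\varphi_{4}|)$, while the lemma asserts the constant $\max(\varphi_{3},\varphi_{4})$ without absolute values. These coincide, but you should say why: since $\psi$ is decreasing, the range of $B=-\tfrac{1+\rho^{2}}{1-\rho}\psi$ on $[c,1]$ is the interval $[-\varphi_{4},\varphi_{3}]$, so $\varphi_{3}+\varphi_{4}\ge 0$; a short case check then gives $\max(|\varphi_{3}|,|\varphi_{4}|)=\max(\varphi_{3},\varphi_{4})$. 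Without this remark your claim that the endpoint values ``give precisely the two expressions in the max'' is not literally true.

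For the \emph{moreover} part your reduction to the single comparison at $c=1/2$ is correct (monotonicity of $\psi$ makes $|\psi(c)|$ maximal at an endpoint of $[1/2,1]$), and the paper does exactly this: it computes $\varphi_{3}-\varphi_{4}(\rho,1/2)$ as an explicit rational function whose numerator $2\rho^{6}+7\rho^{5}-3\rho^{4}-2\rho^{3}+3\rho^{2}+7\rho-2$ is nonnegative for $\rho\ge\rho(6.537)$. Your sketch (``reduces to an explicit polynomial inequality \dots\ verify via sign analysis'') is the right plan; carrying it out amounts to this single-variable check.
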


\begin{proof} 
We have
$$h(\theta)= \frac{f\left(\rho e^{i\theta}\right) -f\left(\rho e^{-i\theta}\right)}{2i}$$ 
so that $h(0)=0$ and
$$\aligned
h'(\theta)&=\frac{\rho e^{i\theta} f'\left(\rho e^{i\theta}\right) +\rho e^{-i\theta} f'\left(\rho e^{-i\theta}\right)}{2}=\Re\left( \rho e^{i\theta} f'\left(\rho e^{i\theta}\right)\right)\\
&=\Re\left(  \frac{r\rho e^{i\theta}}{1+\rho e^{i\theta}}-\frac{\rho e^{i\theta}}{1-\rho e^{i\theta}}-1\right)
= \frac{r\rho(\cos\theta+\rho)}{1+\rho^2+2\rho\cos\theta}-\frac{\rho(\cos\theta-\rho)}{1+\rho^2-2\rho\cos\theta}-1\,.
\endaligned$$
Since $h'(0)=0$, we find
$$\aligned
h'(\theta)&=\frac{r\rho(\cos\theta+\rho)}{1+\rho^2+2\rho\cos\theta}-\frac{r\rho}{1+\rho}
-\frac{\rho(\cos\theta-\rho)}{1+\rho^2-2\rho\cos\theta}+\frac{\rho}{1-\rho}\\
&= -\frac{r\rho(1-\rho)(1-\cos\theta)}{(1+\rho)(1+\rho^2+2\rho\cos\theta)}+\frac{\rho(1+\rho)(1-\cos\theta)}{(1-\rho)(1+\rho^2-2\rho\cos\theta)}\\
&=\left( \frac{\rho(1+\rho)}{(1-\rho)(1+\rho^2-2\rho\cos\theta)}-\frac{1}{1+\rho^2+2\rho\cos\theta}\right)\times(1-\cos\theta)\,.
\endaligned$$
Since
$$\displaylines{
\frac{\rho(1+\rho)}{(1-\rho)(1+\rho^2-2\rho\cos\theta)}-\frac{1}{1+\rho^2+2\rho\cos\theta}
\hfill\cr\hfill
\le \frac{\rho(1+\rho)}{(1-\rho)^3}-\frac{1}{(1+\rho)^2}=\frac{(1+\rho^2)(\rho^2+4\rho-1)}{(1-\rho)^3(1+\rho)^2}=: \varphi_3(\rho)\,,}$$
we deduce that $h'(\theta)\le \max(\varphi_3(\rho),0) \frac{\theta^2}{2}$.

Similarly we get $h'(\theta)\ge -\varphi_4(\rho,c)(1-\cos\theta)\ge-\max(\varphi_4(\rho,c),0)\frac{\theta^2}{2}$
with
$$\varphi_4(\rho,c):=\frac{1}{1+\rho^2+2\rho c}-\frac{\rho(1+\rho)}{(1-\rho)(1+\rho^2-2\rho c)}=\frac{(1+\rho^2)(1-2\rho(1+c)-\rho^2)}{(1-\rho)((1+\rho^2)^2-4\rho^2c^2)}\,.$$
We thus obtain
$$\left\vert h'(\theta)\right\vert \le\max\left( \varphi_3(\rho),0,\varphi_4(\rho,c)\right)\frac{\vert\theta\vert^2}{2}\,.$$
Since $-\varphi_4\le \varphi_3$, the first part of the lemma follows by integrating.

For $c\ge1/2$, note that
$$\varphi_3(\rho)-\varphi_4(\rho,c)\ge \varphi_3(\rho)-\varphi_4(\rho,1/2)
=\frac{(1+\rho^2)(2\rho^6+7\rho^5-3\rho^4-2\rho^3+3\rho^2+7\rho-2)}{(1-\rho)^3(1+\rho)^2(\rho^2+\rho+1)(\rho^2-\rho+1)}\ge0$$
for $\rho\ge 0.26101$. The second part of the lemma follows.
\end{proof}

\subsection{Proof of Theorem \ref{thas+}}

In view of Lemma \ref{key+}, we just need to estimate $K$, $C_g$, and $C_h$, when $\delta=\pi$.
Because of Lemma \ref{r+g}, we may choose $K=4/\pi^2$.
By Lemma \ref{r+g1}, and using the notation in its proof, we may choose  $C_g=\max\left(\frac{M}{24},\frac{\varphi_2(\rho,-1)}{4}\right)$. By \eqref{estM+} we already know $M/24<1/24$.
Since
$$\frac{\partial\varphi_2}{\partial\rho}(\rho,-1)=\frac{\rho^5+15\rho^4+10\rho^3+46\rho^2+17\rho+7}{(1+\rho)^2(1-\rho)^5}>0$$
and $\varphi_2(\sqrt2-1,-1)=(3+2\sqrt2)/2$, we find $C_g\le (3+2\sqrt2)/8$.

With the notation in the proof of Lemma \ref{r+h}, we may put  $C_h=\max(\varphi_3(\rho),\varphi_4(\rho,-1))/6$. Since
$$\varphi_3(\rho)-\varphi_4(\rho,-1)=\frac{2(1+\rho^2)(\rho^2+2\rho-1)}{(1-\rho)^3(1+\rho)^2}\le0$$
and
$$\frac{\partial\varphi_4}{\partial\rho}(\rho,-1)=\frac{1+5\rho+\rho^2+\rho^3}{6(1+\rho)^2(1-\rho)^3}\ge0\,,$$
we find $C_h\le \varphi_4(\sqrt2-1,-1)/6=(\sqrt2+1)/6$.

These estimates show the theorem.

\subsection{Proof of Theorem \ref{th0+}}

Let $\Phi_1(r,\lambda)$ denote the upper bound given in Theorem \ref{thas+}. It follows from \eqref{signM'} that $\Phi_1$ is a decreasing function of $r$.
The function $\Phi_1$ is also obviously a decreasing function of $\lambda$. We thus obtain
$$\Phi_1(r,\lambda)\le\Phi_1(5.941893,241)<0.9999978502\,,$$
for $r\ge5.941893$ and $\lambda\ge241$.
Theorem \ref{thas+} therefore implies that $I(r,\lambda)\neq0$ for $r\ge5.941893$ and $\lambda\ge241$.
Proposition \ref{firstvalues} ensures us that $I(r,\lambda)\neq0$ for any $r$ and $\lambda\le240$.
We thus get $I(r,\lambda)\neq0$ for any $r\ge5.941893$ and any positive integer $\lambda$.

We can now assume $r\le 5.941893$. Let us get a better version of Theorem \ref{thas+} in this case. By Lemma \ref{r+g}, we may choose $K=1$ when $\delta\le\frac{\pi}{3}$.
We also put $c=\cos\delta$ in the definitions of $C_g$ and $C_h$. We then get
$\left\vert  \frac{\sqrt{2\pi \lambda M}}{\exp(\lambda f(\rho))} I(\lambda)-1\right\vert\le \Phi_2(r,\lambda,\delta)$, where 
$$\Phi_2(r,\lambda,\delta):=\frac{3C_g(r,c)}{\lambda M^2}+\frac{15C_h(r,c)^2}{2\lambda M^3}
+\left( \frac{\sqrt2 }{\delta\sqrt{\pi\lambda M}} + \left(1-\frac{\delta}{\pi}\right)\sqrt{2\pi\lambda M} \right)\exp\left(-\lambda M\frac{\delta^2}{2}\right)\,.$$
Let us show that $\Phi_2$ is a decreasing function of $r$, to get results on an interval rather than at a point. Let us study each term defining $\Phi_2$.

We find
$$\frac{\partial C_g}{\partial\rho}(\rho,c)= \frac{P(\rho,c)+7(1-\rho)}{6(1+\rho)^2(1-\rho)^5(1+\rho^2+2\rho c)^2}\,,$$
where $P$ is a polynomial in $c$ and $\rho$ with nonnegative coefficients. We obtain $\frac{\partial C_g}{\partial\rho}(\rho,c)\ge0$, and the first term in $\Phi_2$ is therefore a decreasing function of $r$ by
\eqref{signrho'} and \eqref{signM'}.

We also find
$$\frac{\partial C_h}{\partial\rho}(\rho)=\frac{\rho^5+9\rho^4+8\rho^3+28\rho^2-\rho+3}{6(1-\rho)^4(1+\rho)^3}\ge0\,,$$
and the second term in $\Phi_2$ is also a decreasing function of $r$ by \eqref{signrho'} and \eqref{signM'}.

The third term is easily a a decreasing function of $r$ by  \eqref{signM'} when $\lambda M\delta^2\ge1$.
Therefore, for $\lambda M\delta^2\ge1$, the function $\Phi_2(r,\lambda,\delta)$ is a decreasing function of $r$ and $\lambda$, the monotonicity in $\lambda$ being easy under the condition  $\lambda M\delta^2\ge1$.
Computations then shows $\Phi_2(5.8478,241,0.75)<0.9936$ and $\Phi_2(5.8362,980,0.5)<0.9999$, while $\lambda M\delta^2>20$ in these two cases.
We thus proved $I(\lambda)\neq0$ for $r\ge 5.8362$, except when $241\le\lambda\le979$ and $5.8362\le r< 5.8478$. There are a finite number of possible exceptions, corresponding to the cases
$241\le\lambda_2\le979$ and $5.8362\lambda_2\le \lambda_1< 5.8478\lambda_2$. We checked these cases with {\tt Maple} in 5470 seconds.
Here the limitation comes from the size of $\lambda_2$ that should be handled by {\tt Maple} in the computations. For our program, the limitation is  
$\lambda_2\le998$.

\section{The case $r< 3+2\sqrt2$}

\subsection{General properties}

In this case, the derivative $f'$ has exactly two zeroes. These zeroes are the conjugate complex numbers
$$z_r=\frac{r-1+ i\sqrt{-r^2+6r-1}}{2r}=\rho e^{i\alpha} \quad\text{and}\quad
\bar z_r=\frac{r-1- i\sqrt{-r^2+6r-1}}{2r} =\rho e^{-i\alpha}\,,$$
 with $\rho=\frac{1}{\sqrt r}$ and $\alpha\in(0,\pi/2)$.
Note that 
\begin{equation}\label{alpha}
\cos\alpha =\frac{r-1}{2\sqrt r} \quad\text{and}\quad  \sin\alpha=\frac{\sqrt{-r^2+6r-1}}{2\sqrt r}\,.
\end{equation}

Many properties from the previous section can be rewritten. We still have
$$r=\frac{1+z_r}{z_r(1-z_r)}\quad\text{and}\quad
f''(z_r)=\frac{1-2z_r-z_r^2}{(1+z_r)z_r^2(1-z_r)^2}\,,$$
from which we find
\begin{equation}\label{exprf''}
f''(z_r)z_r^2=\frac{\sqrt{-r^2+6r-1}}{2}\left(\frac{(r+1)\sqrt{-r^2+6r-1}-i(r-1)^2}{4r}\right)=\frac{\sqrt{-r^2+6r-1}}{2} e^{-i\beta}
\end{equation}
with $\beta\in(0,\pi/2)$. 
Note that 
\begin{equation}\label{beta}
\cos\beta=\frac{(r+1)\sqrt{-r^2+6r-1}}{4r} \quad\text{and}\quad \sin\beta=\frac{(r-1)^2}{4r}\,.
\end{equation}

We also use \eqref{deff'} to compute
$$\frac{\partial}{\partial\theta}\left( f\left( \rho e^{i\theta}\right)\right) =i\frac{-r\rho^2e^{2i\theta}+(r-1)\rho e^{i\theta}-1}{1-\rho^2 e^{2i\theta}}
=ir\frac{-e^{2i\theta}+2e^{i\theta}\cos\alpha -1}{r-e^{2i\theta}}\,.$$
We then obtain the useful expressions
\begin{equation}\label{fg}
g'(\theta)+ih'(\theta)=ir\frac{(e^{i\theta}-e^{i\alpha})(e^{-i\alpha}-e^{i\theta})}{r-e^{2i\theta}}=2ir\frac{\cos\alpha-\cos\theta}{(r-1)\cos\theta-i(r+1)\sin\theta}\,.
\end{equation}

As in the previous section, we now state our key lemma.

\begin{lemma}\label{key-} Let $\delta\in\lbrack0,\alpha\rbrack$. Assume that
\begin{enumerate}
\item $\left\vert   f\left( \rho e^{i\theta}\right)- f\left( \rho e^{i\alpha}\right) +\frac{\sqrt{-r^2+6r-1}}{4} e^{-i\beta}(\theta-\alpha)^2 \right\vert \le C_f\vert \theta-\alpha\vert^3$, for some positive constant $C_f$,
\item $g(\theta)-g(\alpha)\le -\frac{(r+1)(-r^2+6r-1)}{16r}(\theta-\alpha)^2+C_g\vert\theta-\alpha\vert^3$, for some constant $C_g>0$,
\end{enumerate}
for $\alpha-\delta\le\theta\le\alpha+\delta$. 

We then have
$$ \displaylines{\left\vert \frac{(-r^2+6r-1)^{1/4}\sqrt{\pi\lambda}}{2^{1+\frac{r+1}{2}\lambda}} I(\lambda)-\cos\left( (r\gamma_1+\gamma_2)\lambda+\frac{\beta}{2}\right) \right\vert
\le\frac{128r^2C_f e^{C_g\lambda\delta^3}}{\sqrt{\pi\lambda}(r+1)^2(-r^2+6r-1)^{7/4}}
\hfill\cr\hfill
+\frac{8re^{ -\frac{(r+1)(-r^2+6r-1)}{16r} \lambda\delta^2}}{\delta\sqrt{\pi\lambda}(r+1)(-r^2+6r-1)^{3/4} }
+ \frac{(\pi-2\delta)(-r^2+6r-1)^{1/4}\sqrt{\lambda}}{2\sqrt\pi} e^{-\frac{(r+1)(-r^2+6r-1)}{16r}\lambda\delta^2+C_g\lambda\delta^3}
\,,}$$
with $\gamma_1=\arccos\frac{3r-1}{2\sqrt2r}$, and $\gamma_2=-\arccos\frac{r-3}{2\sqrt2}$.
\end{lemma}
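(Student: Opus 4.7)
The plan is to apply the saddle-point method at the two conjugate critical points $\rho e^{\pm i\alpha}$. Using $\overline{f(\rho e^{i\theta})} = f(\rho e^{-i\theta})$, I split $I(\lambda) = 2\Re(I_2) + I_3$, where $I_2 := \int_{\alpha-\delta}^{\alpha+\delta} e^{\lambda f(\rho e^{i\theta})}\,\frac{d\theta}{2\pi}$ isolates the upper saddle and $I_3$ collects the contribution of the complementary ``tail'' in $[-\pi,\pi]$. The quadratic approximation to $I_2$ will supply the advertised $\cos$ main term, while three residuals then need to be bounded: the cubic Taylor correction, the truncation of the Gaussian to $[-\delta,\delta]$, and the tail $I_3$.

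Setting $t = \theta - \alpha$ and using \eqref{exprf''} together with Condition~1, one writes $f(\rho e^{i(\alpha + t)}) - f(z_r) = -\frac{\sqrt{-r^2+6r-1}}{4}e^{-i\beta}t^2 + E(t)$ with $|E(t)|\le C_f|t|^3$. To identify the main term I would compute $f(z_r)$ directly: from $|1+z_r|^2 = 2$, $|1-z_r|^2 = 2/r$, $|z_r| = 1/\sqrt r$ one obtains $\Re f(z_r) = \frac{r+1}{2}\log 2$; taking the argument of the identity $r = (1+z_r)/(z_r(1-z_r))$ gives $\arg(1-z_r) = \gamma_1 - \alpha$, hence $\Im f(z_r) = (r+1)\gamma_1 - 2\alpha$. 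The trigonometric identity $\gamma_1 - 2\alpha = \gamma_2$, verifiable by expanding $\cos(\gamma_1 - 2\alpha)$ and $\sin(\gamma_1 - 2\alpha)$ from the stated definitions of $\alpha, \gamma_1, \gamma_2$, then yields $\Im f(z_r) = r\gamma_1 + \gamma_2$. Evaluating the Gaussian by the principal square root (admissible since $\cos\beta > 0$) gives $\int_{-\infty}^{+\infty}\exp\!\bigl(-\lambda\frac{\sqrt{-r^2+6r-1}}{4}e^{-i\beta}t^2\bigr)\,dt = \frac{2\sqrt\pi\,e^{i\beta/2}}{\sqrt\lambda\,(-r^2+6r-1)^{1/4}}$; taking $2\Re$ of its product with $e^{\lambda f(z_r)}/(2\pi)$ reproduces, after multiplication by the normalization $N := (-r^2+6r-1)^{1/4}\sqrt{\pi\lambda}/2^{1+(r+1)\lambda/2}$, exactly the target $\cos\bigl(\lambda(r\gamma_1 + \gamma_2) + \beta/2\bigr)$.

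For the three error estimates, set $A := (r+1)(-r^2+6r-1)/(16r)$. Writing $e^{\lambda E(t)} = 1 + R(t)$ and using $|e^z - 1|\le |z|e^{\max(\Re z,0)}$: Condition~2 forces $\Re E(t) \le C_g|t|^3$ (the quadratic part of $-\frac{\sqrt{-r^2+6r-1}}{4}e^{-i\beta}t^2$ is exactly $-At^2$), whence $|R(t)| \le \lambda C_f|t|^3\,e^{C_g\lambda\delta^3}$ on $|t|\le\delta$. Combined with $\int_{-\infty}^{+\infty} |t|^3\,e^{-\lambda A t^2}\,dt = (\lambda A)^{-2}$, multiplication by $N$ yields the first stated error. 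Extending the Gaussian from $[-\delta,\delta]$ to $\mathbb{R}$ adds at most $e^{-\lambda A\delta^2}/(\lambda A\delta)$ (via $\int_{|t|>\delta}e^{-\lambda A t^2}\,dt \le \int_{|t|>\delta}\frac{|t|}{\delta}e^{-\lambda A t^2}\,dt$), producing the second error. For $I_3$, formula \eqref{fg} factors $g'(\theta) = -\frac{2r(r+1)\sin\theta\,(\cos\alpha - \cos\theta)}{(r-1)^2\cos^2\theta + (r+1)^2\sin^2\theta}$, so $g$ strictly increases on $(0,\alpha)$ and decreases on $(\alpha,\pi)$; therefore $g(\theta) \le \max(g(\alpha-\delta),g(\alpha+\delta)) \le g(\alpha) - A\delta^2 + C_g\delta^3$ on the tail by Condition~2, and $|I_3| \le \frac{\pi - 2\delta}{\pi}\,2^{(r+1)\lambda/2}\,e^{-\lambda A\delta^2 + C_g\lambda\delta^3}$, which after multiplication by $N$ is the third stated error. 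The main obstacle is the phase identification: the identity $\Im f(z_r) = r\gamma_1 + \gamma_2$ (equivalently $\gamma_1 - 2\alpha = \gamma_2$) is not apparent from the stated trigonometric definitions and is only extracted by combining $\arg\bigl((1+z_r)/(z_r(1-z_r))\bigr) = 0$ with a half-angle verification; once this is in hand, the rest of the proof is routine bookkeeping of multiplicative constants in the three errors.
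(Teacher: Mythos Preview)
Your proposal is correct and follows essentially the same approach as the paper: the same $|e^z-1|\le|z|e^{\max(\Re z,0)}$ bound for the cubic correction, the same $\tfrac{|t|}{\delta}$ trick for the Gaussian tail, the same monotonicity of $g$ from \eqref{fg} to control the complementary integral, and the same Gaussian evaluation for the main term. Your phase identification $\Im f(z_r)=r\gamma_1+\gamma_2$ via $\gamma_1-2\alpha=\gamma_2$ (obtained by reading off $\arg(1-z_r)=\gamma_1-\alpha$ from $r=(1+z_r)/(z_r(1-z_r))$) is exactly equivalent to the paper's direct formula $e^{i\gamma_2}=(z_r^{-1}-1)/|z_r^{-1}-1|$.
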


\begin{proof} Instead of \eqref{expcos}, we use the bound 
\begin{equation}\label{exp}
\left\vert e^z-1\right\vert\le  \vert z\vert e^{\max(\Re z,0)}\,,
\end{equation}
with $z= f\left( \rho e^{i\theta}\right)- f\left( \rho e^{i\alpha}\right) +\frac{\sqrt{-r^2+6r-1}}{4} e^{-i\beta}(\theta-\alpha)^2$. 
Conditions 1 and 2 then imply $\vert z\vert\le C_f\lambda\vert \theta-\alpha\vert^3$ and $\Re z\le C_g\lambda\vert\theta-\alpha\vert^3$, using  \eqref{beta}.
We thus find
$$\displaylines{
\left\vert \int_{\alpha-\delta}^{\alpha+\delta} e^{\lambda\left(f\left(\rho e^{i\theta}\right) - f\left(\rho e^{i\alpha}\right) \right)}\frac{d\theta}{2\pi}
- \int_{\alpha-\delta}^{\alpha+\delta} e^{ -\frac{\sqrt{-r^2+6r-1}}{4} e^{-i\beta}\lambda(\theta-\alpha)^2  }\frac{d\theta}{2\pi} \right\vert
\hfill\cr\hfill
\begin{aligned}
&=\left\vert \int_{\alpha-\delta}^{\alpha+\delta} (e^z-1)e^{ -\frac{\sqrt{-r^2+6r-1}}{4} e^{-i\beta}\lambda(\theta-\alpha)^2  }\frac{d\theta}{2\pi} \right\vert\\
&\le \int_{\alpha-\delta}^{\alpha+\delta} C_f\lambda\vert \theta-\alpha\vert^3 \exp\left(\lambda\max\left(g(\theta)-g(\alpha), -\frac{\sqrt{-r^2+6r-1}}{4} \cos\beta (\theta-\alpha)^2 \right)\right) \frac{d\theta}{2\pi}\\
&\le C_f\lambda \int_{-\delta}^{\delta} \vert u\vert^3 \exp\left(-\frac{(r+1)(-r^2+6r-1)}{16r} \lambda u^2+C_g\lambda \vert u\vert^3\right) \frac{du}{2\pi}\\
&\le C_f\lambda e^{C_g\lambda\delta^3} \int_0^\pi u^3 \exp\left(-\frac{(r+1)(-r^2+6r-1)}{16r} \lambda u^2\right) \frac{du}{\pi}\\
&= C_f e^{C_g\lambda\delta^3} \frac{128r^2}{\pi(r+1)^2(-r^2+6r-1)^2\lambda}
\,.\end{aligned}}$$
Since
$$\begin{aligned}
\left\vert \ \int_{\vert \theta-\alpha\vert>\delta} e^{ -\frac{\sqrt{-r^2+6r-1}}{4} e^{-i\beta}\lambda(\theta-\alpha)^2  }\frac{d\theta}{2\pi} \right\vert
&\le \int_{\delta}^{+\infty} \frac{u}{\delta} e^{ -\frac{\sqrt{-r^2+6r-1}}{4}\cos\beta  \lambda u^2  }\frac{du}{\pi}\\
&=\frac{8re^{ -\frac{(r+1)(-r^2+6r-1)}{16r} \lambda \delta^2}}{\pi\delta\lambda(r+1)(-r^2+6r-1) }\,.
\end{aligned}$$
and 
$$\int_{-\infty}^{+\infty} e^{ -\frac{\sqrt{-r^2+6r-1}}{4} e^{-i\beta}\lambda(\theta-\alpha)^2  }\frac{d\theta}{2\pi} 
=\frac{e^{i\beta/2}}{(-r^2+6r-1)^{1/4} \sqrt{\pi\lambda}}$$
we get
$$\displaylines{
\left\vert\frac{(-r^2+6r-1)^{1/4} \sqrt{\pi\lambda}}{e^{\lambda g(\alpha)}} \int_{\alpha-\delta}^{\alpha+\delta} e^{\lambda f\left(\rho e^{i\theta}\right)}\frac{d\theta}{2\pi}- e^{i\lambda h(\alpha)+i\beta/2}\right\vert
\hfill\cr\hfill
\le  \frac{128r^2C_f e^{C_g\lambda\delta^3}}{\sqrt{\pi\lambda}(r+1)^2(-r^2+6r-1)^{7/4}}+\frac{8re^{ -\frac{(r+1)(-r^2+6r-1)}{16r} \lambda \delta^2}}{\delta\sqrt{\pi\lambda}(r+1)(-r^2+6r-1)^{3/4} }
\,.}$$
By \eqref{fg}  we have
$$g'(\theta)=-\frac{2r(r+1)\sin\theta}{\vert (r-1)\cos\theta+i(r+1)\sin\theta\vert^2}(\cos\alpha-\cos\theta)\,,$$
which shows that $g$ is increasing on $\lbrack0,\alpha\rbrack$ and decreasing on $\lbrack\alpha,\pi\rbrack$. We deduce from Condition 2
$$\begin{aligned}
\left\vert \int_0^{\alpha-\delta} e^{\lambda\left(f\left(\rho e^{i\theta}\right) - f\left(\rho e^{i\alpha}\right)\right)}\frac{d\theta}{2\pi}\right\vert
&\le  \int_0^{\alpha-\delta} e^{\lambda(g(\theta)-g(\alpha))} \frac{d\theta}{2\pi} 
\le \frac{\alpha-\delta}{2\pi} e^{\lambda(g(\alpha-\delta)-g(\alpha))}\\
&\le \frac{\alpha-\delta}{2\pi} e^{  -\frac{(r+1)(-r^2+6r-1)}{16r}\lambda\delta^2+C_g\lambda\delta^3}
\end{aligned}$$
and
$$\left\vert \int_{\alpha+\delta}^{\pi} e^{\lambda\left(f\left(\rho e^{i\theta}\right) - f\left(\rho e^{i\alpha}\right)\right)}\frac{d\theta}{2\pi}\right\vert
\le \frac{\pi-\alpha-\delta}{2\pi} e^{ -\frac{(r+1)(-r^2+6r-1)}{16r}\lambda\delta^2+C_g\lambda\delta^3}\,.$$
We deduce
$$\displaylines{
\left\vert\frac{(-r^2+6r-1)^{1/4} \sqrt{\pi\lambda}}{e^{\lambda g(\alpha)}} \int_0^{\pi} e^{\lambda f\left(\rho e^{i\theta}\right)}\frac{d\theta}{2\pi}- e^{i\lambda h(\alpha)+i\beta/2}\right\vert
\le  \frac{128r^2C_f e^{C_g\lambda\delta^3}}{\sqrt{\pi\lambda}(r+1)^2(-r^2+6r-1)^{7/4}}
\hfill\cr\hfill
+\frac{8re^{ -\frac{(r+1)(-r^2+6r-1)}{16r} \lambda\delta^2}}{\delta\sqrt{\pi\lambda}(r+1)(-r^2+6r-1)^{3/4} }
+ \frac{(\pi-2\delta)(-r^2+6r-1)^{1/4}\sqrt{\lambda}}{2\sqrt\pi} e^{-\frac{(r+1)(-r^2+6r-1)}{16r}\lambda\delta^2+C_g\lambda\delta^3}
\,.}$$

By \eqref{equa1}  we have
$ I(\lambda)=2\Re\left(  \int_0^{\pi} e^{\lambda f\left(\rho e^{i\theta}\right)}\frac{d\theta}{2\pi} \right)$.
and the lemma then follows from \eqref{beta} and
$$\begin{aligned}
e^{i\gamma_1}&=\frac{1+z_r}{\vert 1+z_r\vert}=\frac{3r-1+i\sqrt{-r^2+6r-1}}{2\sqrt2r}\,,\\
e^{i\gamma_2}&=\frac{z_r^{-1}-1}{\vert z_r^{-1}-1\vert}=\frac{r-3-i\sqrt{-r^2+6r-1}}{2\sqrt2}\,,\\
e^{\lambda g(\alpha)}&=\vert 1+z_r\vert^{r\lambda}\vert z_r^{-1}-1\vert^{\lambda}=\sqrt2^{(r+1)\lambda}\,.
\end{aligned}$$

\end{proof}

\subsection{Estimates for $f$ and $g$}

\begin{lemma}\label{r-f}
For $\alpha/2\le\theta\le \pi-\alpha/2$, we have
$$\left\vert  f\left( \rho e^{i\theta}\right)- f\left( \rho e^{i\alpha}\right) +\frac{\sqrt{-r^2+6r-1}}{4} e^{-i\beta}(\theta-\alpha)^2\right\vert\le 
0.33846\frac{(r+1)^2}{r^2}\left\vert \theta-\alpha\right\vert^3\,.$$
\end{lemma}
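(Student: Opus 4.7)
The plan is Taylor's theorem with integral remainder applied to the complex-valued function $F(\theta):=f(\rho e^{i\theta})$ at $\theta=\alpha$. First I verify the vanishing of the low-order Taylor coefficients: since $z_r=\rho e^{i\alpha}$ is a root of $f'$, the chain rule with $dz/d\theta=iz$ gives $F'(\alpha)=iz_r f'(z_r)=0$ and $F''(\alpha)=-z_r f'(z_r)-z_r^2 f''(z_r)=-z_r^2 f''(z_r)$, which by \eqref{exprf''} equals $-\tfrac{\sqrt{-r^2+6r-1}}{2}e^{-i\beta}$. The expression inside the absolute value on the left-hand side of the lemma is therefore exactly the third-order Taylor remainder
$$F(\theta)-F(\alpha)-\tfrac{F''(\alpha)}{2}(\theta-\alpha)^2 = \int_\alpha^\theta \tfrac{(\theta-s)^2}{2}F'''(s)\,ds,$$
bounded in modulus by $\tfrac{|\theta-\alpha|^3}{6}\sup_{s\in[\alpha/2,\pi-\alpha/2]}|F'''(s)|$. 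The problem reduces to proving $\sup|F'''|\le 6\cdot 0.33846\cdot(r+1)^2/r^2\approx 2.03076(r+1)^2/r^2$.

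For a clean closed form of $F'''$, I use a partial-fraction decomposition. Writing $w=e^{i\theta}$ and using $2\sqrt r\cos\alpha=r-1$ from \eqref{alpha}, identity \eqref{fg} simplifies to
$$F'(\theta)=ir\frac{(w-e^{i\alpha})(w-e^{-i\alpha})}{w^2-r}=ir+\frac{i\sqrt r}{w-\sqrt r}-\frac{ir\sqrt r}{w+\sqrt r}.$$
Two differentiations in $\theta$ (using $dw/d\theta=iw$) followed by a common-denominator step yield
$$F'''(\theta)=iw\sqrt r\cdot\frac{r(w-\sqrt r)^4-(w+\sqrt r)^4}{(w^2-r)^3},$$
so on $|w|=1$ the modulus $|F'''(\theta)|$ becomes an explicit rational function of $u:=\cos\theta$ and $r$ via $|w\pm\sqrt r|^2=1+r\pm2\sqrt r u$ and $|w^2-r|^2=(1+r)^2-4ru^2$.

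The main obstacle is the uniform sup bound. The naive triangle inequality on the numerator is noticeably lossy: at the limiting case $r=3+2\sqrt 2$, $\theta=0$ it yields $2+\sqrt 2$, whereas direct computation gives $|F'''(0)|=1+\sqrt 2$, some $30\%$ smaller. To retain the cancellation between the two terms I would expand
$$\bigl|r(w-\sqrt r)^4-(w+\sqrt r)^4\bigr|^2=r^2(1+r-2\sqrt r u)^4+(1+r+2\sqrt r u)^4-2r\,\Re\bigl((1-r+2i\sqrt r\sin\theta)^4\bigr),$$
exploiting the identity $(w-\sqrt r)\overline{(w+\sqrt r)}=1-r+2i\sqrt r\sin\theta$; since $\sin^2\theta=1-u^2$, this turns $|F'''|^2$ into an explicit polynomial in $u$ of degree at most $8$, divided by $((1+r)^2-4ru^2)^3$. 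The remaining task is to maximize the resulting rational function on the compact region $u\in[-\cos(\alpha/2),\cos(\alpha/2)]$, $r\in(1,3+2\sqrt 2)$, where $\cos^2(\alpha/2)=(2\sqrt r+r-1)/(4\sqrt r)$; I expect the maximizer to lie on the boundary (either an endpoint of the $u$-interval or the limit $r\to 3+2\sqrt 2$), and verification of the specific numerical inequality $\sup|F'''|\le 2.03076(r+1)^2/r^2$ is the calculational heart of the proof.
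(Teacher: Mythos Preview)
Your Taylor-remainder strategy is natural, and the partial-fraction formula for $F'''$ is correct, but the crucial numerical claim $\sup_{[\alpha/2,\pi-\alpha/2]}|F'''|\le 2.03076\,(r+1)^2/r^2$ is \emph{false}. Take $r\to 3+2\sqrt2$ from below, so $\alpha\to 0$ and the right endpoint $\pi-\alpha/2\to\pi$. At $w=-1$ and $\sqrt r=1+\sqrt2$ one has $(w-\sqrt r)^4=(2+\sqrt2)^4=68+48\sqrt2$, $(w+\sqrt r)^4=4$, $r(w-\sqrt r)^4-(w+\sqrt r)^4=392+280\sqrt2=7(56+40\sqrt2)$, and $|w^2-r|^3=(r-1)^3=56+40\sqrt2$, so
\[
|F'''(\pi)|=\sqrt r\cdot\frac{|r(w-\sqrt r)^4-(w+\sqrt r)^4|}{|w^2-r|^3}=(1+\sqrt2)\cdot 7=7+7\sqrt2\approx 16.9.
\]
Meanwhile $(r+1)^2/r^2=24-16\sqrt2\approx 1.373$, so your target bound is only $\approx 2.79$. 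By continuity the same blow-up occurs for any $r$ just below $3+2\sqrt2$ at $\theta$ near $\pi-\alpha/2$, so no uniform third-derivative bound can give the constant $0.33846$. The issue is structural: $|F'''|$ grows like $|w+\sqrt r|^{-3}$ near $\theta=\pi$, while the integral remainder weight $(\theta-s)^2$ does not damp this enough to recover the needed constant.

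The paper sidesteps this by working one derivative lower. From the factored form $F'(\theta)=ir(e^{i\theta}-e^{i\alpha})(e^{-i\alpha}-e^{i\theta})/(r-e^{2i\theta})$ one already has an explicit factor of size $|\theta-\alpha|$; the remaining factor $(e^{-i\alpha}-e^{i\theta})/(r-e^{2i\theta})$ is then compared to its value at $\theta=\alpha$ with an error of size $|\theta-\alpha|$, and a second $|\theta-\alpha|$-error comes from replacing $e^{i\theta}-e^{i\alpha}$ by $ie^{i\alpha}(\theta-\alpha)$. This yields $|F'(\theta)-F''(\alpha)(\theta-\alpha)|\le C(\theta-\alpha)^2$ with a constant $C\le 1.01537\,(r+1)^2/r^2$, uniformly on the whole interval; integrating once gives the lemma with $C/3\approx 0.33846$. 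The key gain is that the paper never divides by $(w+\sqrt r)^3$: the worst denominator appearing is $|r-e^{2i\theta}|\ge\sqrt{r^2+1-\sqrt r(r-1)}$, which stays bounded away from zero on $[\alpha/2,\pi-\alpha/2]$.
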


\begin{proof}
From \eqref{fg} we know that
$$g'(\theta)+ih'(\theta)=ir\frac{(e^{i\theta}-e^{i\alpha})(e^{-i\alpha}-e^{i\theta})}{r-e^{2i\theta}}\,.$$
We also have
$$\frac{e^{-i\alpha}-e^{i\theta}}{r-e^{2i\theta}}=\frac{e^{-i\alpha}-e^{i\alpha}}{r-e^{2i\alpha}}
+\frac{(e^{i\theta}-e^{i\alpha})(e^{i(\theta-\alpha)}+1-r-e^{i(\alpha+\theta)})}{(r-e^{2i\theta})(r-e^{2i\alpha})}\,,$$
which gives
$$\left\vert g'(\theta)+ih'(\theta) -ir\frac{e^{-i\alpha}-e^{i\alpha}}{r-e^{2i\alpha}}(e^{i\theta}-e^{i\alpha})\right\vert
\le \frac{\sqrt{-r^2+6r-1}+\sqrt r (r-1)}{2\sqrt{r^2+1+\sqrt r (r-1)}}(\theta-\alpha)^2$$
since
$$\begin{aligned}
\vert e^{i(\theta-\alpha)}+1-r-e^{i(\alpha+\theta)}\vert&\le r-1+2\sin\alpha= r-1+\frac{\sqrt{-r^2+6r-1}}{\sqrt r}\,,\\
\vert r-e^{2i\alpha}\vert^2=r^2+1-2r\cos(2\alpha)&=r^2+1-2r\left(2\frac{(r-1)^2}{4r}-1\right)=4r\,,\\
\vert r-e^{2i\theta}\vert^2=r^2+1-2r\cos(2\theta)&\ge r^2+1-2r\cos(\alpha)= r^2+1-\sqrt r (r-1)\,.
\end{aligned}$$
Moreover we find
$$\left\vert \frac{e^{-i\alpha}-e^{i\alpha}}{r-e^{2i\alpha}}(e^{i\theta}-e^{i\alpha}) - \frac{i(1-e^{2i\alpha})}{r-e^{2i\alpha}}(\theta-\alpha)\right\vert
\le  \frac{\sin\alpha}{\vert r-e^{2i\alpha}\vert}(\theta-\alpha)^2=\frac{\sqrt{-r^2+6r-1}}{4r}(\theta-\alpha)^2$$
and therefore
$$\displaylines{\left\vert g'(\theta)+ih'(\theta) +r\frac{1-e^{2i\alpha}}{r-e^{2i\alpha}}(\theta-\alpha)\right\vert
\hfill\cr\hfill
\le \left( \frac{\sqrt{-r^2+6r-1}+\sqrt r (r-1)}{2\sqrt{r^2+1+\sqrt r (r-1)}}+\frac{\sqrt{-r^2+6r-1}}{4} \right)(\theta-\alpha)^2\,.}$$
We deduce the lemma by using  \eqref{alpha}, \eqref{exprf''}, \eqref{beta}, and checking
$$\frac{\sqrt{-r^2+6r-1}+\sqrt r (r-1)}{2\sqrt{r^2+1+\sqrt r (r-1)}}+\frac{\sqrt{-r^2+6r-1}}{4}\le 1.01537\frac{(r+1)^2}{r^2}\,.$$
\end{proof}

\begin{lemma}\label{r-g} For any $\alpha/2\le\theta\le \pi-\alpha/2$, we have
$$g(\theta)-g(\alpha) \le -\frac{(r+1)(-r^2+6r-1)}{16r}(\theta-\alpha)^2+\frac{r+1}{4}\vert\theta-\alpha\vert^3\,.$$
\end{lemma}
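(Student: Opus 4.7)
The plan is to deduce Lemma \ref{r-g} directly from Lemma \ref{r-f} by taking real parts. Since $g(\theta)=\Re f(\rho e^{i\theta})$, and \eqref{beta} gives $\cos\beta=\frac{(r+1)\sqrt{-r^2+6r-1}}{4r}$, we have
$$\Re\!\left(\frac{\sqrt{-r^2+6r-1}}{4}\,e^{-i\beta}(\theta-\alpha)^2\right)=\frac{(r+1)(-r^2+6r-1)}{16r}(\theta-\alpha)^2.$$
Taking the real part of the complex bound in Lemma \ref{r-f} immediately yields
$$g(\theta)-g(\alpha)\le-\frac{(r+1)(-r^2+6r-1)}{16r}(\theta-\alpha)^2+0.33846\,\frac{(r+1)^2}{r^2}|\theta-\alpha|^3.$$
The quadratic coefficient matches the target exactly; it remains to replace the cubic constant $0.33846\,(r+1)^2/r^2$ by the cleaner $(r+1)/4$, which amounts to the elementary quadratic inequality $r^2\ge 1.35384\,(r+1)$.

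This inequality is satisfied whenever $r$ is not too close to $1$. For the remaining small values of $r$ in $(1,3+2\sqrt2)$, one closes the gap by a direct computation based on the closed-form identity
$$2(g(\theta)-g(\alpha))=r\log(1+u/r)+\log(1-u),\qquad u=\sqrt r(\cos\theta-\cos\alpha),$$
which follows from $|1\pm\rho e^{i\theta}|^2=1\pm 2\rho\cos\theta+\rho^2$ combined with $\rho=1/\sqrt r$ and the identities $|1+z_r|^2=2$, $|1-z_r|^2=2/r$. The function $\phi(u):=r\log(1+u/r)+\log(1-u)$ is strictly concave on $(-r,1)$ with $\phi(0)=\phi'(0)=0$ and $\phi''(0)=-(r+1)/r$; combining its Taylor expansion about $u=0$ with the expansion $u=-\sqrt r\sin\alpha(\theta-\alpha)-\tfrac{\sqrt r\cos\alpha}{2}(\theta-\alpha)^2+O((\theta-\alpha)^3)$ reproduces the quadratic coefficient $-\tfrac{(r+1)\sin^2\alpha}{4}=-\tfrac{(r+1)(-r^2+6r-1)}{16r}$ and produces a cubic remainder which, after careful bookkeeping, is dominated by $\tfrac{r+1}{4}|\theta-\alpha|^3$.

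The hard part will be uniformity in the range of $u$: for $\theta$ near $\pi-\alpha/2$ with $r$ near $3+2\sqrt2$, the parameter $u$ can approach $-r$, and the Taylor series around $u=0$ ceases to converge nicely. To handle this regime, one uses instead the integral representation
$$g(\theta)-g(\alpha)=\frac{1}{2}\int_0^{2u}\!\left(\frac{r}{2r+w}-\frac{1}{2-w}\right)dw,$$
obtained by performing the substitution $x=\cos s$ in $g(\theta)-g(\alpha)=\int_\alpha^\theta g'(s)\,ds$ and carrying out a partial-fraction decomposition of $\frac{(\cos\alpha-x)}{(r+1)^2-4rx^2}$ (the denominator factors as $((r+1)-2\sqrt r x)((r+1)+2\sqrt r x)$). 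Since the integrand is regular on the entire interval $w\in(-2r,2)$ that contains the relevant range, this produces a direct, non-asymptotic estimate compatible with the cubic error $\tfrac{r+1}{4}|\theta-\alpha|^3$ without relying on the convergence of any series.
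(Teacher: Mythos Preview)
Your proposal is not a complete proof; it is an outline with a genuine gap for $r$ close to $1$.

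The opening shortcut via Lemma~\ref{r-f} is fine: taking real parts does give
\[
g(\theta)-g(\alpha)\le-\frac{(r+1)(-r^2+6r-1)}{16r}(\theta-\alpha)^2+0.33846\,\frac{(r+1)^2}{r^2}|\theta-\alpha|^3,
\]
and your reduction to $r^2\ge 1.35384(r+1)$ is correct. But that quadratic inequality only holds for $r\gtrsim 2.02$, so the whole interval $1<r\lesssim 2.02$ is left uncovered. From that point on you only \emph{describe} an approach: you write down the identity $2(g(\theta)-g(\alpha))=r\log(1+u/r)+\log(1-u)$ (which is exactly the starting point of the paper's own proof) and then say that ``after careful bookkeeping'' the cubic remainder is dominated by $\tfrac{r+1}{4}|\theta-\alpha|^3$. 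That bookkeeping is the entire content of the lemma in this range; it is not automatic, and you have not done it. The same applies to the integral representation in your last paragraph: it is correct as a formula, but you never extract from it the required cubic bound.

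Two further remarks. First, your worry that $u$ may approach $-r$ on the stated $\theta$-range is misplaced: since $\cos(\alpha/2)<\tfrac{r+1}{2\sqrt r}$ for all $r>1$, one has $1+u/r$ bounded away from $0$ on $[\alpha/2,\pi-\alpha/2]$, so no singular regime arises. Second, the paper's argument does not split into cases on $r$ at all. It uses the same identity you wrote, applies the global inequality $\log(1+x)\le x-\tfrac{x^2}{2}+\tfrac{x^3}{3}$ (valid for all $x>-1$), and reduces everything to showing that
\[
\varphi_5(\theta):=\sin^2\alpha\,(\theta-\alpha)^2-(\cos\theta-\cos\alpha)^2-\tfrac{4\cos\alpha}{3}(\cos\theta-\cos\alpha)^3
\]
satisfies $\varphi_5(\theta)\le|\theta-\alpha|^3$. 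This is obtained by the elementary estimate $|\varphi_5''(\theta)|\le 6|\theta-\alpha|$, which is where the hypothesis $\alpha/2\le\theta\le\pi-\alpha/2$ actually enters. That single second-derivative bound is the missing ``careful bookkeeping'' in your sketch, and it handles all $r\in(1,3+2\sqrt2)$ uniformly.
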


\begin{proof} We find
$$\begin{aligned}
g(\theta)-g(\alpha) &= \frac{r}{2}\log\left( \frac{1+\rho^2+2\rho\cos\theta}{1+\rho^2+2\rho\cos\alpha} \right) 
+\frac{1}{2} \log\left( \frac{1+\rho^2-2\rho\cos\theta}{1+\rho^2-2\rho\cos\alpha} \right)\\
&=\frac{r}{2}\log\left( 1+\frac{2\rho(\cos\theta-\cos\alpha)}{1+\rho^2+2\rho\cos\alpha} \right)
+\frac{1}{2} \log\left(1-\frac{2\rho(\cos\theta-\cos\alpha)}{1+\rho^2-2\rho\cos\alpha} \right)\\
&=\frac{r}{2}\log\left( 1+ \rho(\cos\theta-\cos\alpha)\right) +\frac{1}{2} \log\left(1-\rho^{-1}(\cos\theta-\cos\alpha) \right)
\end{aligned}$$
since $2\rho\cos\alpha=1-\rho^2$. We deduce the upper bound
$$\begin{aligned}
g(\theta)-g(\alpha)&\le -\frac{r+1}{4}(\cos\theta-\cos\alpha)^2+\frac{1-r^2}{6\sqrt r}(\cos\theta-\cos\alpha)^3\\
&=\frac{r+1}{4}\left( -(\cos\theta-\cos\alpha)^2-\frac{4\cos\alpha}{3}(\cos\theta-\cos\alpha)^3\right)\,.
\end{aligned}$$
Let us define $\varphi_5(\theta)=\sin^2\alpha (\theta-\alpha)^2-(\cos\theta-\cos\alpha)^2-\frac{4\cos\alpha}{3}(\cos\theta-\cos\alpha)^3$, so that
$$\begin{aligned}
\varphi_5''(\theta)&=2(\cos\theta-\cos\alpha)\left( 6\cos\alpha \cos^2\theta  +2(1-\cos^2\alpha) \cos\theta-3\cos\alpha \right)\\
&=: 2(\cos\theta-\cos\alpha)p(\cos\theta,\cos\alpha)\,.\end{aligned}$$
We check that
$$\begin{aligned}
p(\cos\theta,\cos\alpha)&\le p(\cos(\alpha/2),\cos\alpha)=3+(1-\cos^2\alpha)\left(\sqrt{2(1+\cos\alpha)}-3\right)\le 3\,,\\
p(\cos\theta,\cos\alpha)&\ge \min_{-1\le x\le 1\atop 0\le y\le 1}p(x,y)=p(0,1)= -3\,.
\end{aligned}$$
This gives $\vert \varphi_5''(\theta)\vert\le 6\vert \theta-\alpha\vert$ and
$ \varphi_5(\theta)\le \vert \theta-\alpha\vert^3$, and the lemma follows using \eqref{alpha}.
\end{proof}

\subsection{Proof of Theorem \ref{thas-}}

We use Lemmas \ref{key-}, \ref{r-f}, \ref{r-g} to get
$$ \displaylines{\left\vert \frac{(-r^2+6r-1)^{1/4}\sqrt{\pi\lambda}}{2^{1+\frac{r+1}{2}\lambda}} I(\lambda)-\cos\left( (r\gamma_1+\gamma_2)\lambda_2+\frac{\beta}{2}\right) \right\vert
\le\frac{24.45 e^{(r+1)\lambda\frac{\delta^3}{4}}}{\sqrt{\lambda}(-r^2+6r-1)^{7/4}}
\hfill\cr\hfill
+\frac{8re^{ -\frac{(r+1)(-r^2+6r-1)}{16r} \lambda\delta^2}}{\delta\sqrt{\pi\lambda}(r+1)(-r^2+6r-1)^{3/4} }
+ \frac{(\pi-2\delta)(-r^2+6r-1)^{1/4}\sqrt{\lambda}}{2\sqrt\pi} e^{-\frac{(r+1)(-r^2+6r-1)}{16r}\lambda\delta^2+(r+1)\lambda\frac{\delta^3}{4}}
\,,}$$
for $\delta\le\alpha/2$. We apply the inequalities $xe^{-x}\le1$ and $x^3e^{-x}\le27e^{-3}$ to the second and third term of the right hand side respectively to get
$$ \displaylines{\left\vert \frac{(-r^2+6r-1)^{1/4}\sqrt{\pi\lambda}}{2^{1+\frac{r+1}{2}\lambda}} I(\lambda)-\cos\left( (r\gamma_1+\gamma_2)\lambda_2+\frac{\beta}{2}\right) \right\vert
\le\frac{24.45 e^{(r+1)\lambda\frac{\delta^3}{4}}}{\sqrt{\lambda}(-r^2+6r-1)^{7/4}}
\hfill\cr\hfill
+\frac{128r^2}{\delta^3\lambda^{3/2}\sqrt{\pi}(r+1)^2(-r^2+6r-1)^{7/4} }
+ \frac{ 55296\sqrt\pi r^3 e^{(r+1)\lambda\frac{\delta^3}{4}}}{ e^3\delta^6\lambda^{5/2} (r+1)^3(-r^2+6r-1)^{11/4} } 
\,.}$$
We now choose $\delta=\left(\frac{(r+1)\lambda}{8}\right)^{-1/3}$. By hypothesis, we have $\delta\le \frac{\sin\alpha}{2}\le \frac{\alpha}{2}$. We obtain
$$ \displaylines{\left\vert \frac{(-r^2+6r-1)^{1/4}\sqrt{\pi\lambda}}{2^{1+\frac{r+1}{2}\lambda}} I(\lambda)-\cos\left( (r\gamma_1+\gamma_2)\lambda_2+\frac{\beta}{2}\right) \right\vert
\hfill\cr\hfill
\begin{aligned}
&\le \frac{1}{\sqrt\lambda (-r^2+6r-1)^{11/4}}\left( 24.45e^2(-r^2+6r-1)+\frac{16r^2(-r^2+6r-1)}{\sqrt\pi(r+1)}+\frac{864\sqrt\pi r^3}{e(r+1)}\right)\\
&\le \frac{16336}{\sqrt\lambda (-r^2+6r-1)^{11/4}}\,,
\end{aligned}}$$
which proves the theorem.

\subsection{Proof of Theorem \ref{thexceptions}}

For $r>3+2\sqrt2$, Theorem \ref{thas+} shows that $I(\lambda)\neq0$ for $\lambda$ large enough, the implied bound only depending on $M(r)$. The first part of the theorem follows.

For $1<r<3+2\sqrt2$, define $\mathcal S_r$ as the set of non negative integers $\lambda$ such that $I(r,\lambda)=0$. As usual, let $\Vert x\Vert$ denote the distance of $x$ to the nearest integer.
For $\lambda\in\mathcal S_r$, we have
$$\left\Vert (r\gamma_1+\gamma_2)\lambda+\frac{\pi+\beta}{2}\right\Vert\le \frac{\pi}{2}\left\vert \sin\left(  (r\gamma_1+\gamma_2)\lambda+\frac{\pi+\beta}{2}\right)\right\vert\le \frac{25661}{\sqrt\lambda (-r^2+6r-1)^{11/4}}\,,$$
by Theorem \ref{thas-}.  For $\lambda,\lambda'\in\mathcal S_r$, $\lambda<\lambda'<\lambda+\frac{(-r^2+6r-1)^{11/4} \sqrt\lambda }{102644}$, we find
$$\left\Vert (r\gamma_1+\gamma_2)(\lambda'-\lambda)\right\Vert\le \frac{51322}{\sqrt\lambda (-r^2+6r-1)^{11/4}}\le\frac{1}{2(\lambda'-\lambda)}\,.$$
By Legendre's theorem \cite{L}, this implies that $\lambda'-\lambda$ is a denominator $q_n$ in the continued fraction expansion of $r\gamma_1+\gamma_2$.
Since $q_n\ge F_{n+1}=\frac{1}{\sqrt5}\left( \left(\frac{1+\sqrt5}{2}\right)^{n+1}-\left(\frac{1-\sqrt5}{2}\right)^{n+1}\right)\ge \frac{1}{\sqrt5} \left(\frac{1+\sqrt5}{2}\right)^n$,
the number of such denominators less than $q$ is upper bounded by $\frac{\log (q\sqrt5)}{\log\left(\frac{1+\sqrt5}{2}\right)}$, and we get for $x\ge1$:
$$\displaylines{
\#\mathcal S_r\cap\left\lbrack x,x+\frac{(-r^2+6r-1)^{11/4}\sqrt x}{102644}\right\rbrack
\hfill\cr\hfill
\le \frac{\log\frac{ (-r^2+6r-1)^{11/4}\sqrt{5x}}{102644} }{\log\left(\frac{1+\sqrt5}{2}\right)}=\varphi_6\left(x+\frac{(-r^2+6r-1)^{11/4}\sqrt x}{102644} 
\right)-\varphi_6(x)\,,}$$
with
$$\varphi_6(x)=\frac{102644}{ (-r^2+6r-1)^{11/4}\log\left(\frac{1+\sqrt5}{2}\right)}\sqrt x\log x+O_r(\sqrt x)\,.$$
The second part of the theorem follows.

\section{The case $r$ close to $1$}

When $r$ goes to $1$, the angles $\alpha$ and $\beta$ go to $\pi/2$ and $0$ respectively. So we shall prove specific estimates in this case.
Before that, we establish Proposition \ref{propr1}.

\subsection{Small values of $\lambda_1-\lambda_2$}

We have
$$\begin{aligned}
C_{\lambda_1,\lambda_2}(-1,1)&=\oint (z+1)^{\lambda_1}(1-z)^{\lambda_2}\frac{dz}{2i\pi z^{\lambda_1+1}}
=\oint (1-z^2)^{\lambda_2}(z+1)^{\lambda_1-\lambda_2} \frac{dz}{2i\pi z^{\lambda_1+1}}\\
&=\sum_{\lambda_2\le 2j\le \lambda_1} (-1)^j{\lambda_2\choose j}{\lambda_1-\lambda_2\choose \lambda_1-2j}\\
&=  \frac{\lambda_2!}{  \left\lfloor\frac{\lambda_1}{2}\right\rfloor! \left\lfloor\frac{\lambda_2}{2}\right\rfloor!} 
\sum_{\left\lceil\frac{\lambda_2}{2}\right\rceil \le j\le \left\lfloor\frac{\lambda_1}{2}\right\rfloor} (-1)^j {\lambda_1-\lambda_2\choose \lambda_1-2j}
\frac{\left\lfloor\frac{\lambda_1}{2}\right\rfloor!}{j!}\frac{\left\lfloor\frac{\lambda_2}{2}\right\rfloor!}{(\lambda_2-j)!} \\
&=  \frac{\lambda_2! (-1)^{\left\lceil\frac{\lambda_2}{2}\right\rceil}}{  \left\lfloor\frac{\lambda_1}{2}\right\rfloor! \left\lfloor\frac{\lambda_2}{2}\right\rfloor!} 
\sum_{0 \le j\le \left\lfloor\frac{\lambda_1}{2}\right\rfloor-\left\lceil\frac{\lambda_2}{2}\right\rceil}
(-1)^j {\lambda_1-\lambda_2\choose 2j+\left\lceil\frac{\lambda_2}{2}\right\rceil-\left\lfloor\frac{\lambda_2}{2}\right\rfloor}
\frac{\left\lfloor\frac{\lambda_1}{2}\right\rfloor!}{(\left\lceil\frac{\lambda_2}{2}\right\rceil+j)!}
\frac{\left\lfloor\frac{\lambda_2}{2}\right\rfloor!}{(\left\lfloor\frac{\lambda_2}{2}\right\rfloor-j)!} 
\,.\end{aligned}$$
When $\lambda_1-\lambda_2$ is fixed, the last sum is  a polynomial $\widetilde C$ in $\left\lfloor\frac{\lambda_2}{2}\right\rfloor$ of degree at most
$\left\lfloor\frac{\lambda_1}{2}\right\rfloor-\left\lceil\frac{\lambda_2}{2}\right\rceil$, also depending on the parity of $\lambda_1$ and $\lambda_2$. More precisely we consider the four families of polynomials
$$\begin{aligned}
\widetilde C_{l,0,0}(k)&=\sum_{0\le j\le l} (-1)^j{2l\choose 2j}\frac{(k+l)!}{(k+j)!}\frac{k!}{(k-j)!}                 &\text{for $(\lambda_2,\lambda_1)=(2k,2k+2l)$,}\\
\widetilde C_{l,0,1}(k)&=\sum_{0\le j\le l} (-1)^j{2l+1\choose 2j}\frac{(k+l)!}{(k+j)!}\frac{k!}{(k-j)!}             &\text{for $(\lambda_2,\lambda_1)=(2k,2k+2l+1)$,}\\
\widetilde C_{l,1,0}(k)&=\sum_{0\le j\le l-1} (-1)^j{2l\choose 2j+1}\frac{(k+l)!}{(k+1+j)!}\frac{k!}{(k-j)!}      &\text{for $(\lambda_2,\lambda_1)=(2k+1,2k+2l+1)$,}\\
\widetilde C_{l,1,1}(k)&=\sum_{0\le j\le l} (-1)^j{2l+1\choose 2j+1}\frac{(k+l+1)!}{(k+1+j)!}\frac{k!}{(k-j)!} &\text{for $(\lambda_2,\lambda_1)=(2k+1,2k+2l+2)$.}
\end{aligned}$$
The first values are given by $\widetilde C_{0,0,0}(k)=\widetilde C_{1,0,0}(k)=\widetilde C_{0,0,1}(k)=\widetilde C_{0,1,1}(k)=1$, $\widetilde C_{0,1,0}(k)=0$, $\widetilde C_{1,1,0}(k)=2$, $\widetilde C_{2,1,0}(k)=8$, 
$\widetilde C_{1,0,1}(k)=-2k+1$, $\widetilde C_{1,1,1}(k)=2k+6$.
We can compute the leading terms of each polynomial to show that the degree of $\widetilde C_{l,0,0}(k)$, $\widetilde C_{l,0,1}(k)$, $\widetilde C_{l,1,1}(k)$ is at least $2$ for $l\ge2$, and that the degree of $\widetilde C_{l,1,0}(k)$ is at least $2$ for $l\ge3$:
$$\begin{aligned}
\widetilde C_{l,0,0}(k)&=\sum_{0\le j\le l} (-1)^i{2l\choose 2j} \left( k^l+\left({l+1\choose2}-j^2\right)k^{l-1}+\cdots\right)\\
&=\Re\left(\sum_{0\le m\le 2l} i^m {2l\choose m}  \left( k^l+\left({l+1\choose2}-\frac{m^2}{4}\right)k^{l-1}+\cdots\right)\right)\\
&=\Re\left( (1+i)^{2l} k^l+\left( {l+1\choose2} -\frac{l(1+2li)}{4} \right)(1+i)^{2l} k^{l-1}+\cdots\right)\\
\widetilde C_{l,0,1}(k)&=\sum_{0\le j\le l} (-1)^j{2l+1\choose 2j} \left( k^l+\cdots \right)=\Re\left(\sum_{0\le m\le 2l+1} i^m {2l+1\choose m}  \left( k^l+\cdots\right)\right)\\
&=\Re\left( (1+i)^{2l+1} k^l+\cdots\right)=\pm 2^lk^l+\cdots\\
\widetilde C_{l,1,0}(k)&=\sum_{0\le j\le l-1} (-1)^j{2l\choose 2j+1} \left(  k^{l-1}+\left({l+1\choose2}-j^2-j-1\right)k^{l-2}+\cdots \right)\\
&=\Im\left(\sum_{0\le m\le 2l} i^m {2l\choose m}  \left( k^{l-1}+\left({l+1\choose2}-\frac{m^2+3}{4}\right)k^{l-2}+\cdots\right)\right)\\
&=\Im\left( (1+i)^{2l} k^{l-1}+\left( {l+1\choose2} -\frac{3+l(1+2li)}{4} \right)(1+i)^{2l} k^{l-2}+\cdots\right)\\
\widetilde C_{l,1,1}(k)&=\sum_{0\le j\le l} (-1)^j{2l+1\choose 2j+1} \left(k^l+\cdots \right)=\Im\left(\sum_{0\le m\le 2l+1} i^m {2l+1\choose m}  \left( k^l+\cdots\right)\right)\\
&=\Im\left( (1+i)^{2l+1} k^l+\cdots\right)=\pm 2^lk^l+\cdots
\,.\end{aligned}$$

For each of the four cases, we checked with {\tt Maple} that $\widetilde C_l$ is an irreducible polynomial for $2\le l\le350$, which proves Proposition \ref{propr1}
since $\widetilde C_l$ then has no integer zero. Each case required between $95000$ and $98000$ seconds.

\subsection{The approach}

We give a more specific version of Lemma \ref{key-}.

\begin{lemma}\label{key1} Let $\delta,\delta_1$ be two positive real numbers with  $\delta<\min\left(2/3,3(3-r)/4\right)$ and $\delta\le \delta_1\le\alpha$. Assume that
\begin{enumerate}
\item $\left\vert  f\left( \rho e^{i\theta}\right)- f\left( \rho e^{i\alpha}\right) +\frac{(\theta-\alpha)^2}{2} \right\vert
\le \frac{\vert\theta-\alpha\vert^3}{3}+\frac{r-1}{4} (\theta-\alpha)^2$, for $\alpha-\delta\le\theta\le\alpha+\delta$,
\item $g(\theta)-g(\alpha)\le -\frac{(\theta-\alpha)^2}{2}+\frac{\vert\theta-\alpha\vert^3}{2} $, for $\alpha-\delta_1\le\theta\le\alpha+\delta_1$.
\end{enumerate}
Define $\gamma_1=\arccos\frac{3r-1}{2\sqrt2r}$ and $\gamma_2=-\arccos\frac{r-3}{2\sqrt2}$.
We then have
$$ \displaylines{\left\vert \frac{\sqrt{\pi\lambda}}{2^{\frac{1+(r+1)\lambda}{2}}} I(\lambda)-\cos\left( (r\gamma_1+\gamma_2)\lambda\right) \right\vert
\le \frac{1}{3\sqrt{2\pi\lambda}\left(\frac{3-r}{4}-\frac{\delta}{3}\right)^2}+\frac{r-1}{2^{7/2}\left(\frac{3-r}{4}-\frac{\delta}{3}\right)^{3/2}}
+\frac{\sqrt2e^{ -\lambda\frac{\delta^2}{2}  }}{\sqrt{\pi\lambda}\delta}
\hfill\cr\hfill
+\left(\frac{\pi-2\delta_1}{\sqrt{2\pi}} + \frac{2^{3/2}}{\sqrt\pi\delta(2-3\delta)}  \right)\sqrt\lambda e^{-\lambda\frac{\delta_1^2}{2}+\lambda\frac{\delta_1^3}{2}}\,.}$$
\end{lemma}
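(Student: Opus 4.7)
The argument parallels the proof of Lemma~\ref{key-}, replacing the generic cubic/quartic bounds by the sharper Conditions~1 and~2 specific to the case $r$ close to~$1$. Writing $I(\lambda) = 2\Re\int_0^\pi e^{\lambda f(\rho e^{i\theta})}\,d\theta/(2\pi)$, I would split the integration at $|\theta - \alpha| = \delta$ and $|\theta - \alpha| = \delta_1$, compare the central piece against a full Gaussian, and bound the rest separately.

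On the central region $[\alpha-\delta, \alpha+\delta]$, set $z = \lambda\bigl(f(\rho e^{i\theta}) - f(\rho e^{i\alpha}) + (\theta-\alpha)^2/2\bigr)$, so that $e^{\lambda f(\rho e^{i\theta})} - e^{\lambda f(\rho e^{i\alpha})}\,e^{-\lambda u^2/2} = e^{\lambda f(\rho e^{i\alpha})}\,e^{-\lambda u^2/2}(e^z - 1)$ with $u = \theta - \alpha$. Applying $|e^z - 1| \le |z|\,e^{\max(\Re z, 0)}$ and Condition~1 bounds both $|z|$ and $\max(\Re z, 0)$ by $\lambda(|u|^3/3 + (r-1)u^2/4)$, giving a pointwise error of $\lambda(|u|^3/3 + (r-1)u^2/4)\,e^{-\lambda u^2(\frac{3-r}{4} - \frac{\delta}{3})}$; the quadratic part of the exponent is genuinely negative thanks to the hypothesis $\delta < 3(3-r)/4$. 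Extending the bound to $\mathbb{R}$ and computing the Gaussian moments $\int |u|^3 e^{-\lambda a u^2}du = 1/(\lambda a)^2$ and $\int u^2 e^{-\lambda a u^2}du = \sqrt\pi/(2(\lambda a)^{3/2})$ with $a = (3-r)/4 - \delta/3$ produces the first two terms of the RHS after the scaling $\sqrt{\pi\lambda}/2^{(1+(r+1)\lambda)/2}$.

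Extending the reference Gaussian $e^{-\lambda u^2/2}$ from $[-\delta, \delta]$ to $\mathbb{R}$ contributes the standard tail $\int_{|u|>\delta} e^{-\lambda u^2/2}\,du \le (2/\lambda\delta)e^{-\lambda\delta^2/2}$, which after scaling gives the third term. For the outer region $[0,\pi]\setminus[\alpha-\delta, \alpha+\delta]$, I would exploit the monotonicity of $g$ proved in Lemma~\ref{key-} (increasing on $[0,\alpha]$ and decreasing on $[\alpha,\pi]$) together with Condition~2. On the \emph{far} portion $[0, \alpha-\delta_1] \cup [\alpha+\delta_1, \pi]$, monotonicity reduces the integrand to its value at $\alpha \pm \delta_1$, which by Condition~2 is at most $e^{\lambda g(\alpha)}\,e^{\lambda(-\delta_1^2/2 + \delta_1^3/2)}$; multiplying by the length $\pi - 2\delta_1$ produces the first piece of the fourth term. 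On the \emph{intermediate} portion $[\alpha-\delta_1, \alpha-\delta] \cup [\alpha+\delta, \alpha+\delta_1]$, Condition~2 gives $e^{\lambda(g(\theta) - g(\alpha))} \le e^{\lambda(-u^2/2 + u^3/2)}$, and I would integrate by parts against $H'(u) = -u(2-3u)/2$ to control $\int_\delta^{\delta_1}$ of this integrand; the hypothesis $\delta < 2/3$ keeps $2-3\delta$ positive, and the boundary contributions yield the $2^{3/2}/(\sqrt\pi\,\delta(2-3\delta))$ factor together with the exponent $-\lambda\delta_1^2/2 + \lambda\delta_1^3/2$.

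Final assembly uses $e^{\lambda g(\alpha)} = 2^{(r+1)\lambda/2}$ and $h(\alpha) = r\gamma_1 + \gamma_2$ (both established during the proof of Lemma~\ref{key-}) to identify the main term $\cos((r\gamma_1 + \gamma_2)\lambda)$ arising from $2\Re\bigl[e^{\lambda f(\rho e^{i\alpha})}/\sqrt{2\pi\lambda}\bigr]$ after scaling. The delicate step is the intermediate region: even granting Condition~2, producing the precise constant $2^{3/2}/(\sqrt\pi\,\delta(2-3\delta))$ with the exponent $-\lambda\delta_1^2/2 + \lambda\delta_1^3/2$ requires careful tracking of signs of the boundary contributions in the integration by parts, exploiting both restrictions $\delta < 2/3$ and $\delta \le \delta_1$; everything else is essentially bookkeeping of Gaussian moments and the two identifications above.
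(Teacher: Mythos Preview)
Your plan is correct and follows the paper's proof essentially step for step: the same split at $|\theta-\alpha|=\delta$ and $|\theta-\alpha|=\delta_1$, the same use of $|e^z-1|\le|z|\,e^{\max(\Re z,0)}$ with Condition~1 on the central window (yielding exactly the exponent $(3-r)/4-\delta/3$ and the two Gaussian moments you name), the same Gaussian-tail estimate for the third term, and the same monotonicity-plus-Condition~2 argument on the far and intermediate portions. The only cosmetic difference is on the intermediate piece: what you call ``integration by parts against $H'(u)=-u(2-3u)/2$'' the paper phrases as the comparison $u'(\theta)/u'(\alpha-\delta)\ge 1$ for $u(\theta)=-\tfrac{(\theta-\alpha)^2}{2}-\tfrac{(\theta-\alpha)^3}{2}$, after which $\int u'(\theta)e^{\lambda u(\theta)}\,d\theta$ is evaluated exactly and the $\delta$-endpoint dropped, giving the factor $1/\bigl(\pi\delta(2-3\delta)\bigr)$ and the exponent $-\lambda\delta_1^2/2+\lambda\delta_1^3/2$ that you anticipate.
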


\begin{proof} We follow the proof of Lemma \ref{key-}, and use Conditions 1 and 2:
$$\displaylines{
\left\vert \int_{\alpha-\delta}^{\alpha+\delta}e^{\lambda\left(  f\left(\rho e^{i\theta}\right) - f\left(\rho e^{i\alpha}\right) \right)} \frac{d\theta}{2\pi}
- \int_{\alpha-\delta}^{\alpha+\delta} e^{ -\lambda\frac{(\theta-\alpha)^2}{2}} \frac{d\theta}{2\pi} \right\vert
\hfill\cr\hfill
\begin{aligned}
&\le  \int_{\alpha-\delta}^{\alpha+\delta} \lambda\left(\frac{\vert\theta-\alpha\vert^3}{3}+\frac{r-1}{4} (\theta-\alpha)^2\right)
e^{-\frac{3-r}{4}\lambda(\theta-\alpha)^2+\lambda\frac{\vert\theta-\alpha\vert^3}{3}}  \frac{d\theta}{2\pi} \\
&\le \lambda \int_0^{\delta} \left( \frac{u^3}{3}+\frac{r-1}{4} u^2\right)e^{-\lambda u^2\left(\frac{3-r}{4}-\frac{\delta}{3}\right)} \frac{du}{\pi} 
<\lambda \int_0^{\infty} \left( \frac{u^3}{3}+\frac{r-1}{4} u^2\right)e^{-\lambda u^2\left(\frac{3-r}{4}-\frac{\delta}{3}\right)} \frac{du}{\pi} 
\,,\end{aligned}}$$
that is
\begin{equation}\label{eq11}
\left\vert \int_{\alpha-\delta}^{\alpha+\delta}e^{\lambda\left(  f\left(\rho e^{i\theta}\right) - f\left(\rho e^{i\alpha}\right) \right)} \frac{d\theta}{2\pi}
- \int_{\alpha-\delta}^{\alpha+\delta} e^{ -\lambda\frac{(\theta-\alpha)^2}{2}} \frac{d\theta}{2\pi} \right\vert
\le \frac{1}{6\pi\lambda\left(\frac{3-r}{4}-\frac{\delta}{3}\right)^2}+\frac{r-1}{16\sqrt{\pi\lambda}\left(\frac{3-r}{4}-\frac{\delta}{3}\right)^{3/2}}\,.
\end{equation}
We still have
\begin{equation}\label{eq12}
\left\vert  \int_{\vert \theta-\alpha\vert>\delta} e^{ -\lambda\frac{(\theta-\alpha)^2}{2}  }\frac{d\theta}{2\pi} \right\vert
\le \frac{e^{ -\lambda\frac{\delta^2}{2}  }}{\pi\lambda\delta}
\end{equation}
and
\begin{equation}\label{eq14}
\int_{-\infty}^{\infty} e^{ -\lambda\frac{(\theta-\alpha)^2}{2}  }\frac{d\theta}{2\pi} = \frac{1}{\sqrt{2\pi\lambda}}\,.
\end{equation}

Let us now assume $0\le\theta\le\alpha-\delta$. As in the proof of Lemma \ref{key-}, we deduce from \eqref{fg} and Condition 2:
\begin{equation}\label{eq13}
\left\vert \int_0^{\alpha-\delta_1}e^{\lambda\left(  f\left(\rho e^{i\theta}\right) - f\left(\rho e^{i\alpha}\right) \right)} \frac{d\theta}{2\pi} \right\vert
\le \frac{\alpha-\delta_1}{2\pi} e^{-\lambda\frac{\delta_1^2}{2}+\lambda\frac{\delta_1^3}{2}}\,.
\end{equation}
Put $u(\theta)=-\frac{(\theta-\alpha)^2}{2}-\frac{(\theta-\alpha)^3}{2}$, so that $g(\theta)\le u(\theta)$ for $\theta\le\alpha$. We check that
$u'(\theta)=\frac{3}{2}(\alpha-\theta)(\frac{2}{3}-\alpha+\theta)\ge0$ for $\alpha-2/3\le\theta\le \alpha$, and
$u'(\theta)-u'(\alpha-\delta)=-(\theta+\alpha+\delta)(3(\theta-\alpha-\delta)/2+1)\ge0$ for $0\le \theta\le \alpha-\delta$. We thus get $\frac{u'(\theta)}{u'(\alpha-\delta)}\ge 1$ for 
$0\le \theta\le \alpha-\delta$. We deduce
\begin{equation}\label{eq15}
\left\vert \int_{\alpha-\delta_1}^{\alpha-\delta}e^{\lambda\left(  f\left(\rho e^{i\theta}\right) - f\left(\rho e^{i\alpha}\right) \right)} \frac{d\theta}{2\pi} \right\vert
\le \int_{\alpha-\delta_1}^{\alpha-\delta}  \frac{u'(\theta)}{u'(\alpha-\delta)}e^{\lambda u(\theta)}\frac{d\theta}{2\pi}
=\frac{e^{-\lambda\frac{\delta_1^2}{2}+\lambda\frac{\delta_1^3}{2}}-e^{-\lambda\frac{\delta^2}{2}+\lambda\frac{\delta^3}{2}}}{3\pi\delta(2/3-\delta)}\,.
\end{equation}
The estimates \eqref{eq13} and \eqref{eq15} give
\begin{equation}\label{eq16}
\left\vert \int_0^{\alpha-\delta}e^{\lambda\left(  f\left(\rho e^{i\theta}\right) - f\left(\rho e^{i\alpha}\right) \right)} \frac{d\theta}{2\pi} \right\vert
\le \frac{\alpha-\delta_1}{2\pi} e^{-\lambda\frac{\delta_1^2}{2}+\lambda\frac{\delta_1^3}{2}}+\frac{e^{-\lambda\frac{\delta_1^2}{2}+\lambda\frac{\delta_1^3}{2}}}{\pi\delta(2-3\delta)}\,.
\end{equation}

Similarly we have
\begin{equation}\label{eq17}
\left\vert \int_{\alpha+\delta}^{\pi}e^{\lambda\left(  f\left(\rho e^{i\theta}\right) - f\left(\rho e^{i\alpha}\right) \right)} \frac{d\theta}{2\pi} \right\vert
\le \frac{\pi-\alpha-\delta_1}{2\pi} e^{-\lambda\frac{\delta_1^2}{2}+\lambda\frac{\delta_1^3}{2}}+\frac{e^{-\lambda\frac{\delta_1^2}{2}+\lambda\frac{\delta_1^3}{2}}}{\pi\delta(2-3\delta)}\,.
\end{equation}

From \eqref{eq11}, \eqref{eq12},  \eqref{eq14}, \eqref{eq16}, \eqref{eq17} we obtain
$$\displaylines{\left\vert \int_0^{\pi}e^{\lambda\left(  f\left(\rho e^{i\theta}\right) - f\left(\rho e^{i\alpha}\right) \right)} \frac{d\theta}{2\pi}-\frac{1}{\sqrt{2\pi\lambda}}\right\vert
\le  \frac{1}{6\pi\lambda\left(\frac{3-r}{4}-\frac{\delta}{3}\right)^2}+\frac{r-1}{16\sqrt{\pi\lambda}\left(\frac{3-r}{4}-\frac{\delta}{3}\right)^{3/2}}
+\frac{e^{ -\lambda\frac{\delta^2}{2}  }}{\pi\lambda\delta}
\hfill\cr\hfill
+\frac{\pi-2\delta_1}{2\pi} e^{-\lambda\frac{\delta_1^2}{2}+\lambda\frac{\delta_1^3}{2}}+\frac{2e^{-\lambda\frac{\delta_1^2}{2}+\lambda\frac{\delta_1^3}{2}}}{\pi\delta(2-3\delta)}\,,}$$
and the lemma follows from
$$\Re\left( \int_0^{\pi}e^{\lambda\left(  f\left(\rho e^{i\theta}\right) - g(\alpha) \right)} \frac{d\theta}{2\pi}- \frac{e^{i\lambda h(\alpha)}}{\sqrt{2\pi\lambda}}\right)
=\frac{I(\lambda)}{2}-\frac{\cos(\lambda h(\alpha))}{\sqrt{2\pi\lambda}}\,.$$
\end{proof}

\subsection{Estimates for $f$ and $g$}

\begin{lemma}\label{r1f} Assume $1\le  r\le 2.282$. For $\alpha-\sqrt{r-1}\le\theta\le \alpha+\sqrt{r-1}$, we  have
$$\left\vert  f\left( \rho e^{i\theta}\right)- f\left( \rho e^{i\alpha}\right) +\frac{(\theta-\alpha)^2}{2} \right\vert
\le \frac{\vert\theta-\alpha\vert^3}{3}+\frac{r-1}{4} (\theta-\alpha)^2\,.$$
\end{lemma}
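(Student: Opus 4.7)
The plan is to Taylor-expand $F(\theta) := f(\rho e^{i\theta})$ around the saddle $\alpha$ by working directly with the explicit factorisation of $f$. Setting $u = \theta-\alpha$, $x = e^{iu}-1$, $\mu = z_r/(1+z_r)$, and $\nu = z_r/(1-z_r)$, substitution into \eqref{deff} gives
\[
F(\theta) - F(\alpha) = r\log(1+\mu x) + \log(1-\nu x) - iu.
\]
The critical-point condition $f'(z_r) = 0$ from \eqref{deff'} translates into the useful identity $r\mu - \nu = 1$, so the linear-in-$x$ part and the $-iu$ term essentially cancel to second order. Expanding each logarithm via $\log(1+w) = w - w^2/2 + \int_0^w s^2/(1+s)\,ds$ yields the exact decomposition
\[
F(\theta) - F(\alpha) + \tfrac{u^2}{2} = \bigl(x - iu + \tfrac{u^2}{2}\bigr) - \tfrac{r\mu^2+\nu^2}{2}\,x^2 + \int_0^{x}\! t^2\!\left[\tfrac{r\mu^3}{1+\mu t} - \tfrac{\nu^3}{1-\nu t}\right]dt,
\]
and it suffices to bound the three summands.

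The first summand is bounded by $|u|^3/6$ via the elementary Taylor remainder $\bigl|e^{iu} - 1 - iu + u^2/2\bigr| \le |u|^3/6$. For the second summand I would establish the identity $r\mu^2 + \nu^2 = 1 - z_r^2 f''(z_r)$ using the definitions of $\mu,\nu$ and \eqref{deff''}, then combine with $z_r^2 f''(z_r) = \tfrac{\sqrt{-r^2+6r-1}}{2}e^{-i\beta}$ from \eqref{exprf''} together with the values of $\cos\beta$ and $\sin\beta$ in \eqref{beta}. A modulus computation then collapses to the algebraic identity $(r^2-4r-1)^2 + (r-1)^2(-r^2+6r-1) = 16r$, giving the clean answer $|r\mu^2+\nu^2| = (r-1)/(2\sqrt{r})$. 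Since $|x|\le|u|$ and $1/\sqrt{r}\le 1$ for $r\ge 1$, the second summand is at most $(r-1)u^2/4$, matching the second term of the lemma.

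The tail integral is the main obstacle. Merging the two fractions,
\[
\frac{r\mu^3}{1+\mu t} - \frac{\nu^3}{1-\nu t} = \frac{(r\mu^3 - \nu^3) - \mu\nu(r\mu^2+\nu^2)\,t}{(1+\mu t)(1-\nu t)},
\]
so that the $t$-coefficient of the numerator already carries a factor $r-1$ inherited from $r\mu^2+\nu^2$. For the constant coefficient $r\mu^3 - \nu^3$ I would use the analogous identity $r\mu^3 - \nu^3 = 1 + z_r^3 f'''(z_r)/2$ (which follows from the definitions and \eqref{deff''}) and bound through $|z_r| = 1/\sqrt{r}$, $|1+z_r| = \sqrt{2}$, $|1-z_r| = \sqrt{2/r}$. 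The denominator must then be controlled along the path $t = sx$, $s\in[0,1]$; here the hypothesis $|u|\le\sqrt{r-1}$ together with $r\le 2.282$ (precisely equivalent to $\sqrt{r-1}\le\alpha$) is what keeps the path bounded away from the singularities of $f$ at $\pm 1$.

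The most delicate step is making the tail bound sharp enough to be $\le |u|^3/6$ (so that added to the first summand it totals $|u|^3/3$). A naive triangle-inequality lower bound $(1-|\mu||x|)(1-|\nu||x|)$ on the denominator is too weak and becomes embarrassingly small as $|\nu||x|$ approaches $1$ near $r = 2.282$; a direct numerical check shows that even $|F'''|$ can exceed $5$ at the boundary of the interval there, so no uniform third-derivative bound will suffice. The resolution is to exploit that $x = e^{iu}-1$ lies on the circle of radius $1$ centred at $-1$, so along the segment $t = sx$ the point $\nu t$ traces an arc rather than a radial segment; a geometric lower bound on $|1-\nu t|$ that uses this angular information (not just the modulus $|\nu t|$) should give the required $|u|^3/6$. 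This geometric refinement is the technical heart of the proof.
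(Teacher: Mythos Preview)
Your decomposition into three summands is valid, and the bounds on the first two are correct (the identity $|r\mu^2+\nu^2|=(r-1)/(2\sqrt r)$ is right). But the proposal is incomplete: you explicitly do not carry out the bound on the tail integral, only assert that a ``geometric refinement \dots\ should give the required $|u|^3/6$''. Since you yourself observe that $|F'''|$ can exceed $5$ near the boundary for $r$ close to $2.282$, and the naive denominator bound $(1-|\mu||x|)(1-|\nu||x|)$ degenerates there, this is a genuine gap, not a routine omission. The lemma remains unproved in your write-up.

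The paper avoids this difficulty altogether by a different and shorter route. Instead of Taylor-expanding $F(\theta)$ to third order (which forces you to control a third-order remainder), it works with the \emph{first derivative} $F'(\theta)=g'(\theta)+ih'(\theta)$ through the exact factorisation from \eqref{fg},
\[
F'(\theta)=ir\,\frac{(e^{i\theta}-e^{i\alpha})(e^{-i\alpha}-e^{i\theta})}{r-e^{2i\theta}}
=i\bigl(e^{i(\theta-\alpha)}-1\bigr)\left(1+\frac{e^{i(\theta+\alpha)}\bigl(e^{i(\theta-\alpha)}-r\bigr)}{r-e^{2i\theta}}\right),
\]
and compares $F'(\theta)$ first with $i(e^{i(\theta-\alpha)}-1)$ and then with $-(\theta-\alpha)$. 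The only nontrivial estimate needed is the lower bound $|r-e^{2i\theta}|\ge 2$ on the range $|\theta-\alpha|\le\sqrt{r-1}$, which is exactly where the hypothesis $r\le 2.282$ (equivalently $\sqrt{r-1}\le\alpha$) enters; this is a single real-variable inequality, not a path integral. One then integrates the resulting bound $|F'(\theta)+(\theta-\alpha)|\le(\theta-\alpha)^2+\tfrac{r-1}{2}|\theta-\alpha|$ to obtain the lemma directly. The denominator $r-e^{2i\theta}$ here plays the role your product $(1+\mu t)(1-\nu t)$ plays in the tail integral, but it is far easier to control because no parameter $t\in[0,1]$ is involved and the bound $|r-e^{2i\theta}|^2=r^2+1-2r\cos(2\theta)\ge 4$ is elementary.
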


\begin{proof} We first notice that $\sqrt{r-1}\le \alpha=\arccos\left(\frac{r-1}{2\sqrt r}\right)$, for $1\le  r\le 2.282459$. This implies $0\le \theta\le2\alpha\le\pi$.

By \eqref{fg} we have
$$\begin{aligned}
g'(\theta)+ih'(\theta)&=ir\frac{(e^{i\theta}-e^{i\alpha})(e^{-i\alpha}-e^{i\theta})}{r-e^{2i\theta}}=ir\frac{\left(e^{i(\theta-\alpha)}-1\right)\left(1-e^{i(\theta+\alpha)}\right)}{r-e^{2i\theta}}\\
&=i\left(e^{i(\theta-\alpha)}-1\right)\left(1+\frac{e^{i(\theta+\alpha)}\left(e^{i(\theta-\alpha)}-r\right)}{r-e^{2i\theta}}\right)\,,
\end{aligned}$$
which gives
$$\left\vert g'(\theta)+ih'(\theta)-i\left(e^{i(\theta-\alpha)}-1\right)\right\vert\le \vert\theta-\alpha\vert \frac{\vert e^{i(\theta-\alpha)}-1\vert+r-1}{\vert r-e^{2i\theta}\vert} 
\le \frac{ (\theta-\alpha)^2 + (r-1)\vert\theta-\alpha\vert}{\vert r-e^{2i\theta}\vert}\,.$$
Since $2\alpha-2\sqrt{r-1}\le 2\pi-2\alpha-2\sqrt{r-1}$, we also get
$$\begin{aligned}
\vert r-e^{2i\theta}\vert^2&=r^2+1-2r\cos(2\theta)\ge r^2+1-2r\cos\left(2\alpha-2\sqrt{r-1}\right)  \\
&\ge r^2+1-2r\cos(2\alpha)- 4r\sqrt{r-1}\sin(2\alpha)+4r(r-1)\cos(2\alpha)\\
&=4r-2(r-1)^{3/2}\sqrt{-r^2+6r-1}+2(r-1)(r^2-4r+1)\\
&= 4+2(r-1)^{3/2}\left(\sqrt{-r^2+6r-1}+(r-3)\sqrt{r-1}\right)\ge4\,,
\end{aligned}$$
which leads to the upper bound
$$\left\vert g'(\theta)+ih'(\theta)-i\left(e^{i(\theta-\alpha)}-1\right) \right\vert\le \frac{(\theta-\alpha)^2}{2}+\frac{r-1}{2}\vert\theta-\alpha\vert\,.$$
We deduce
$$\left\vert g'(\theta)+ih'(\theta)+(\theta-\alpha)\right\vert\le \frac{(\theta-\alpha)^2}{2}+\frac{r-1}{2}\vert\theta-\alpha\vert+\frac{(\theta-\alpha)^2}{2}$$
and the lemma follows by integrating. 
\end{proof}

\begin{lemma}\label{r1g} For any $\alpha/2\le\theta\le3\alpha/2$ and $1<r\le2.11952$, we have
$$g(\theta)-g(\alpha) \le -\frac{(\theta-\alpha)^2}{2}+\frac{\vert\theta-\alpha\vert^3}{2}\,.$$
\end{lemma}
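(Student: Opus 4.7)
The plan is to derive Lemma~\ref{r1g} as a direct consequence of the coarser estimate in Lemma~\ref{r-g}. That earlier lemma applies on the larger interval $\alpha/2\le\theta\le\pi-\alpha/2$, which certainly contains $[\alpha/2,3\alpha/2]$ since $\alpha<\pi/2$ whenever $r>1$, and gives
$$g(\theta)-g(\alpha)\le -A(r)(\theta-\alpha)^2+B(r)|\theta-\alpha|^3,$$
with $A(r):=\frac{(r+1)(-r^2+6r-1)}{16r}$ and $B(r):=\frac{r+1}{4}$.

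Setting $\epsilon=\theta-\alpha$, my target inequality $-A\epsilon^2+B|\epsilon|^3\le-\epsilon^2/2+|\epsilon|^3/2$ is equivalent, using $B(r)-1/2=(r-1)/4>0$ for $r>1$, to
$(A(r)-1/2)\epsilon^2\ge(B(r)-1/2)|\epsilon|^3$.
A short algebraic manipulation yields
$A(r)-1/2=(r-1)(-r^2+4r+1)/(16r)$, so the ratio $(A-1/2)/(B-1/2)$ simplifies to $(-r^2+4r+1)/(4r)$; both quantities are positive on $(1,2+\sqrt5)$, which comfortably contains $(1,2.11952]$. Since $|\epsilon|\le\alpha/2$ in the stated range, it therefore suffices to establish the single-variable inequality
$$\alpha(r)\le\frac{-r^2+4r+1}{2r}\qquad\text{for }1<r\le 2.11952.$$

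The main obstacle is precisely this one-variable inequality, and the constant $2.11952$ has clearly been chosen so that equality is (essentially) attained at the right endpoint. I would handle it by introducing $\psi(r):=(-r^2+4r+1)/(2r)-\alpha(r)$, computing $\alpha'(r)=-(r+1)/(2r\sqrt{-r^2+6r-1})$, and verifying that $\psi'(r)\le 0$ on $(1,2.11952]$. After clearing denominators and squaring, this monotonicity claim reduces to the polynomial inequality $r^2(r+1)^2\le (r^2+1)^2(-r^2+6r-1)$, which is elementary to check on the interval. Since $\psi(1)=2-\pi/2>0$, the monotonicity then yields $\psi(r)\ge\psi(2.11952)\ge 0$ throughout, and the lemma follows.
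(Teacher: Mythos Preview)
Your argument is correct and essentially identical to the paper's: both start from Lemma~\ref{r-g}, rewrite the excess over $-\tfrac{(\theta-\alpha)^2}{2}+\tfrac{|\theta-\alpha|^3}{2}$ as $(r-1)(\theta-\alpha)^2\bigl(\tfrac{r^2-4r-1}{16r}+\tfrac{|\theta-\alpha|}{4}\bigr)$, and then reduce the claim to the single-variable inequality $\alpha(r)\le\tfrac{-r^2+4r+1}{2r}$, for which the constant $2.11952$ is (essentially) the threshold. Your monotonicity justification for this last inequality is a bit more detailed than the paper's ``we check that\ldots'', but the content is the same.
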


\begin{proof} From Lemma \ref{r-g}  we get
$$g(\theta)-g(\alpha)\le -\frac{(\theta-\alpha)^2}{2}+\frac{\vert\theta-\alpha\vert^3}{2}
+(r-1)(\theta-\alpha)^2\left( \frac{r^2-4r-1}{8r}+\frac{\vert\theta-\alpha\vert}{4}\right)\,.$$
We check that
$$\frac{r^2-4r-1}{16r}+\frac{\alpha}{8}\le0$$
for $1< r\le 2.119518\dots$, and the lemma follows.
\end{proof}

\subsection{Proof of Theorem \ref{thas1}}

For $(r-1)\lambda\ge 702$ and $r-1\le \sqrt{\frac{8\pi}{\lambda}}$, we have $\lambda\ge \lceil \frac{702^2}{8\pi}\rceil=19609$ and therefore $r\le 1+\sqrt\frac{8\pi}{19609}<1.03581$.
We thus can apply Lemmas \ref{r1f} and \ref{r1g}, and we set $\delta=\sqrt{r-1}$ and $\delta_1=\alpha/2$. We then have $\delta\le 0.19<2/3< 3(3-r)/4$.
We therefore can use Lemma \ref{key1}, and we obtain
$$ \displaylines{\left\vert \frac{\sqrt{\pi\lambda}}{2^{\frac{1+(r+1)\lambda}{2}}} I(\lambda)-\cos\left( (r\gamma_1+\gamma_2)\lambda\right) \right\vert
\le \frac{1}{3\sqrt{2\pi\lambda}\left(\frac{3-r}{4}-\frac{\sqrt{r-1}}{3}\right)^2}+\frac{r-1}{2^{7/2}\left(\frac{3-r}{4}-\frac{\sqrt{r-1}}{3}\right)^{3/2}}
\hfill\cr\hfill
+\frac{\sqrt2e^{ -\frac{\lambda(r-1)}{2}  }}{\sqrt{\pi\lambda(r-1)}}
+\left(\frac{\pi-\alpha}{\sqrt{2\pi}} + \frac{2^{3/2}}{\sqrt\pi\delta(2-3\delta)}  \right)\sqrt\lambda e^{-\lambda\frac{\alpha^2}{8}+\lambda\frac{\alpha^3}{16}}\,.}$$
Since $\delta(2-3\delta)\ge \sqrt\frac{702}{\lambda}(2-3\sqrt{0.03581})\ge \frac{37.949}{\sqrt\lambda} $ and $\alpha\le\pi/2$, we find
$$ \displaylines{\left\vert \frac{\sqrt{\pi\lambda}}{2^{\frac{1+(r+1)\lambda}{2}}} I(\lambda)-\cos\left( (r\gamma_1+\gamma_2)\lambda\right) \right\vert
\le \frac{1}{3\sqrt{2\pi\lambda}\left(\frac{3-r}{4}-\frac{\sqrt{r-1}}{3}\right)^2}+\frac{r-1}{2^{7/2}\left(\frac{3-r}{4}-\frac{\sqrt{r-1}}{3}\right)^{3/2}}
\hfill\cr\hfill
+\frac{\sqrt2e^{ -\frac{\lambda(r-1)}{2}  }}{\sqrt{\pi\lambda(r-1)}}
+\left(0.6267+0.04206\sqrt\lambda \right)\sqrt\lambda e^{-\lambda\left(1-\frac{\pi}{4}\right)\frac{\alpha^2}{8}}\,.}$$
Let $\Phi_4(r,\lambda)$ denote this last upper bound. The first two terms and the fourth term are increasing functions of $r\le 1.03581$ and decreasing functions of $\lambda\ge19609$, the third term is a decreasing function
of $\lambda(r-1)\ge702$: we already get this way $\Phi_4(r,\lambda)\le 0.0165$, which proves the first inequality in the theorem.

Further assume $\log\lambda\le (r-1)\lambda\le 1.67007\sqrt{\lambda}$. We find $\lambda\ge \lambda_0:= \lceil \frac{702^2}{1.67007^2}\rceil=176688$, $r\le1+\frac{1.67007}{\lambda_0}$,
$\alpha\ge \alpha_0:=\arccos\left( \frac{\sqrt{1.67007^2/\lambda_0}}{2(1+\sqrt{1.67007^2/\lambda_0})^{1/2}}\right)$ and
$$\displaylines{\sqrt\lambda \Phi_4(r,\lambda)\le \frac{1}{3\sqrt{2\pi}\left(\frac{3-r}{4}-\frac{\sqrt{r-1}}{3}\right)^2}+\frac{1.67007}{2^{7/2}\left(\frac{3-r}{4}-\frac{\sqrt{r-1}}{3}\right)^{3/2}}
\hfill\cr\hfill
+\frac{1}{\sqrt{351\pi}}+\left(0.6267+0.04206\sqrt\lambda_0 \right)\lambda_0 e^{-\lambda_0\left(1-\frac{\pi}{4}\right)\frac{\alpha_0^2}{8}}<1.05882\,,}$$
which proves the second inequality in the theorem for $\log\lambda\le (r-1)\lambda\le 1.67007\sqrt{\lambda}$. 

Assume now $c_1\sqrt\lambda\le  (r-1)\lambda\le c_2 \sqrt\lambda$, so that $\lambda\ge \lambda_0:= \lceil \frac{702^2}{c_2}\rceil$. We find here
$$\displaylines{\sqrt\lambda \Phi_4(r,\lambda)\le \frac{1}{3\sqrt{2\pi}\left(\frac{3-r}{4}-\frac{\sqrt{r-1}}{3}\right)^2}+\frac{c_2}{2^{7/2}\left(\frac{3-r}{4}-\frac{\sqrt{r-1}}{3}\right)^{3/2}}
\hfill\cr\hfill
+\frac{\sqrt2e^{ -\frac{c_1\sqrt\lambda_0}{2}  }}{\sqrt{\pi c_1\sqrt\lambda_0}}
+\left(0.6267+0.04206\sqrt\lambda_0 \right)\lambda_0 e^{-\lambda_0\left(1-\frac{\pi}{4}\right)\frac{\alpha_0^2}{8}}\,,}$$
which gives
$$\sqrt\lambda \Phi_4(r,\lambda)\le\begin{cases}
 1.05882 &\text{if $1.67007\sqrt{\lambda}\le (r-1)\lambda\le \sqrt{\pi\lambda}$;}\\
1.30775 &\text{if $\sqrt{\pi\lambda}\le (r-1)\lambda\le \sqrt{2\pi\lambda}$;}\\
1.50929&\text{if $\sqrt{2\pi\lambda}\le (r-1)\lambda\le \sqrt{3\pi\lambda}$;}\\
1.68876&\text{if $\sqrt{3\pi\lambda}\le (r-1)\lambda\le \sqrt{4\pi\lambda}$;}\\
1.85482&\text{if $\sqrt{4\pi\lambda}\le (r-1)\lambda\le \sqrt{5\pi\lambda}$;}\\
2.01189&\text{if $\sqrt{5\pi\lambda}\le (r-1)\lambda\le \sqrt{6\pi\lambda}$;}\\
2.1626&\text{if $\sqrt{6\pi\lambda}\le (r-1)\lambda\le \sqrt{7\pi\lambda}$;}\\
2.30865&\text{if $\sqrt{7\pi\lambda}\le (r-1)\lambda\le \sqrt{8\pi\lambda}$.}\\
\end{cases}$$
This completes the proof of the theorem.

We now need lower bounds for $\vert \cos(\lambda_1\gamma_1+\lambda_2\gamma_2)\vert$, and this is the aim of the next subsection.

\subsection{Additional lemmas}

\begin{lemma}\label{gamma}
For $1\le r\le \frac{9+\sqrt{73}}{4} =4.386\dots$, we have
$$ 0\le  r\gamma_1+\gamma_2-(r-3)\frac{\pi}{4}+\frac{(r-1)^2}{4} \le \frac{(r-1)^3}{8}\,.$$
\end{lemma}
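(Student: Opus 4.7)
The plan is to prove the double inequality by Taylor expansion at $r=1$ of
$$\Phi(r) := r\gamma_1(r) + \gamma_2(r) - (r-3)\frac{\pi}{4} + \frac{(r-1)^2}{4}.$$
I will verify that $\Phi(1)=\Phi'(1)=\Phi''(1)=0$ and that $\Phi'''(r) \in [0, 3/4]$ throughout $[1, (9+\sqrt{73})/4]$. Taylor's theorem with integral remainder then yields both bounds simultaneously:
$$0 \leq \int_1^r \Phi'''(s)\,\frac{(r-s)^2}{2}\,ds = \Phi(r) \leq \int_1^r \frac{3}{4}\cdot\frac{(r-s)^2}{2}\,ds = \frac{(r-1)^3}{8}.$$

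First I would establish the vanishing at $r=1$. The values $\gamma_1(1) = \arccos(1/\sqrt 2) = \pi/4$ and $\gamma_2(1) = -\arccos(-1/\sqrt 2) = -3\pi/4$ give $\Phi(1)=0$. Differentiating the $\arccos$ definitions gives
$$\gamma_1'(r) = \frac{-1}{r\sqrt{-r^2+6r-1}}, \qquad \gamma_2'(r) = \frac{1}{\sqrt{-r^2+6r-1}},$$
whence $r\gamma_1'(r)+\gamma_2'(r) = 0$ and the clean identity $(r\gamma_1+\gamma_2)'(r) = \gamma_1(r)$. This reduces the next two derivatives to $\Phi'(r) = \gamma_1(r) - \pi/4 + (r-1)/2$, with $\Phi'(1)=0$, and $\Phi''(r) = \gamma_1'(r) + 1/2$, with $\Phi''(1)=0$. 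One more differentiation produces
$$\Phi'''(r) = \frac{-2r^2+9r-1}{r^2(-r^2+6r-1)^{3/2}}.$$
Its numerator has roots $(9 \pm \sqrt{73})/4$, so it is non-negative on our interval, giving $\Phi'''(r) \geq 0$ and the lower bound $\Phi(r) \geq 0$.

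For the upper bound, note that $\Phi'''(1) = 6/8 = 3/4$ exactly, so it will suffice to prove $\Phi''''(r) \leq 0$ on $[1, (9+\sqrt{73})/4]$. Setting $u = -2r^2+9r-1$ and $w = -r^2+6r-1$, a short calculation gives
$$\Phi''''(r) = -\frac{\chi(r)}{r^3\, w^{5/2}}, \qquad \chi(r) := u\,(-5r^2+21r-2) - rw\,(9-4r),$$
which expands to the quartic $\chi(r) = 6r^4 - 54r^3 + 140r^2 - 30r + 2$. The main obstacle is thus the inequality $\chi(r) \geq 0$. Since $\chi'$ has zeros inside our interval, a clean sign-analysis via the derivative is not immediate; instead I would exhibit the sum-of-squares certificate
$$\chi(r) = 6\left(r^2 - \frac{9r}{2} + \frac{5}{9}\right)^2 + \frac{71}{6}\,r^2 + \frac{4}{27},$$
which is manifestly positive for every real $r$. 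This gives $\Phi''''(r) \leq 0$ on the interval, hence $\Phi'''(r) \leq 3/4$, and the Taylor integration above yields the upper bound $\Phi(r) \leq (r-1)^3/8$, completing the proof plan.
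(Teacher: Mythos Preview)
Your proof is correct and follows essentially the same route as the paper: both arguments reduce to showing $\Phi'''(r)\in[0,3/4]$ by computing $\gamma^{(4)}$ and checking that the quartic numerator $3r^4-27r^3+70r^2-15r+1$ (your $\chi(r)/2$) is positive. The only real difference is that the paper simply asserts this quartic is positive, whereas you supply an explicit sum-of-squares certificate; everything else---the derivative identities $\gamma'=\gamma_1$, the vanishing at $r=1$, and the Taylor-remainder conclusion---is identical.
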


\begin{proof} Define $\gamma(r)=r\gamma_1+\gamma_2$. We have $\gamma_1(1)=\frac{\pi}{4}$, $\gamma_2(1)=-\frac{3\pi}{4}$, $\gamma_1'(r)=-\frac{1}{r\sqrt{-r^2+6r-1}}$,
$\gamma_2'(r)=\frac{1}{\sqrt{-r^2+6r-1}}$, and therefore  $\gamma(1)=-\frac{\pi}{2}$, $\gamma'=\gamma_1$, $\gamma''(r)=-\frac{1}{r\sqrt{-r^2+6r-1}}$, $\gamma''(1)=-\frac{1}{2}$,
$$\gamma^{(3)}(r)= \frac{-2r^2+9r-1}{r^2(-r^2+6r-1)^{3/2}}\quad\text{and}\quad \gamma^{(4)}(r)=-2\frac{3r^4-27r^3+70r^2-15r+1}{r^3(-r^2+6r-1)^{5/2}}\,.$$
Since $3r^4-27r^3+70r^2-15r+1>0$, the function $\gamma^{(3)}$ is decreasing on $\lbrack1,3+2\sqrt2)$. From $\gamma^{(3)}(1)=3/4$ and $\gamma^{(3)}(\frac{9+\sqrt{73}}{4} )=0$, we get $0\le \gamma^{(3)}(r)\le3/4$ for $1\le r\le \frac{9+\sqrt{73}}{4}$, and
the required inequality follows.
\end{proof}

\begin{lemma}\label{sin}  For $1\le r\le 3$, we have
\begin{itemize}
\item For $\lambda_1+\lambda_2\equiv \modd{0} {4}$:
$$\displaylines{\vert \cos(\lambda(r\gamma_1+\gamma_2))\vert
\hfill\cr\hfill
\ge\begin{cases}
\cos\left(\frac{(r-1)^2}{4}\lambda \right)  &\text{if  $\lambda\frac{(r-1)^2}{4}\le \frac{\pi}{2}$;}\\
\min\left( \cos\left(\pi-\frac{(r-1)^2}{4}\lambda\right), \cos\left(\pi-\frac{3-r}{2}\frac{(r-1)^2}{4}\lambda\right)\right)
&\text{if $\frac{\pi}{3-r}\le \lambda\frac{(r-1)^2}{4}\le\frac{3\pi}{2}$.}
\end{cases}}$$
\item For $\lambda_1+\lambda_2\equiv \modd{1} {4}$:
$$\displaylines{\vert \cos(\lambda(r\gamma_1+\gamma_2))\vert
\hfill\cr\hfill
\ge\begin{cases}
\min\left(\frac{1}{\sqrt2}, \cos\left(\frac{\pi}{4}-\lambda\frac{(r-1)^2}{4} \right)\right) &\text{if  $\lambda\frac{(r-1)^2}{4}\le \frac{3\pi}{4}$;}\\
\min\left( \cos\left(\frac{5\pi}{4}-\frac{(r-1)^2}{4}\lambda\right), \cos\left(\frac{5\pi}{4}-\frac{3-r}{2}\frac{(r-1)^2}{4}\lambda\right)\right)
&\text{if $\frac{3\pi}{2(3-r)}\le \lambda\frac{(r-1)^2}{4}\le\frac{7\pi}{4}$.}
\end{cases}}$$
\item For $\lambda_1+\lambda_2\equiv \modd{2} {4}$:
$$\displaylines{\vert \cos(\lambda(r\gamma_1+\gamma_2))\vert
\hfill\cr\hfill
\ge\begin{cases}
\min\left( \cos\left(\frac{\pi}{2}-\lambda\frac{(r-1)^2}{4} \right),  \cos\left(\frac{\pi}{2}-\frac{3-r}{2}\lambda\frac{(r-1)^2}{4} \right)\right)
&\text{if  $\lambda\frac{(r-1)^2}{4}\le \pi$;}\\
\min\left( \cos\left(\frac{3\pi}{2}-\lambda\frac{(r-1)^2}{4} \right),  \cos\left(\frac{3\pi}{2}-\frac{3-r}{2}\lambda\frac{(r-1)^2}{4} \right)\right)
&\text{if $\frac{2\pi}{3-r}\le \lambda\frac{(r-1)^2}{4}\le2\pi$.}
\end{cases}}$$
\item For $\lambda_1+\lambda_2\equiv \modd{3} {4}$:
$$\displaylines{\vert \cos(\lambda(r\gamma_1+\gamma_2))\vert
\hfill\cr\hfill
\ge\begin{cases}
 \cos\left(\frac{\pi}{4}+\lambda\frac{(r-1)^2}{4} \right) &\text{if  $\lambda\frac{(r-1)^2}{4}\le \frac{\pi}{4}$;}\\
\min\left( \cos\left(\frac{3\pi}{4}-\lambda\frac{(r-1)^2}{4} \right),  \cos\left(\frac{3\pi}{4}-\frac{3-r}{2}\lambda\frac{(r-1)^2}{4} \right)\right)
 &\text{if $\frac{\pi}{2(3-r)}\le \lambda\frac{(r-1)^2}{4}\le\frac{5\pi}{4}$.}
 \end{cases}}$$
\end{itemize}
\end{lemma}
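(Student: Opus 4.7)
The plan is to start from Lemma \ref{gamma}, which gives $r\gamma_1+\gamma_2=(r-3)\pi/4-(r-1)^2/4+\xi$ with $\xi\in[0,(r-1)^3/8]$. Multiplying by $\lambda$ and using $\lambda r=\lambda_1$, $\lambda=\lambda_2$ yields
$$\lambda(r\gamma_1+\gamma_2)=(\lambda_1-3\lambda_2)\frac{\pi}{4}-t+\eta,$$
where $t:=\lambda(r-1)^2/4$ and $\eta\in[0,(r-1)t/2]$. Since $\lambda_1-3\lambda_2\equiv \lambda_1+\lambda_2\pmod 4$ and $|\cos|$ has period $\pi$, I may replace the leading term by $m\pi/4$ with $m:=(\lambda_1+\lambda_2)\bmod 4\in\{0,1,2,3\}$. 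The lemma therefore reduces to providing a lower bound on $|\cos(\phi)|$ that is valid for all $\phi$ in the short segment
$$J_m\;:=\;\left[m\frac{\pi}{4}-t,\; m\frac{\pi}{4}-\frac{3-r}{2}t\right].$$

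For each residue $m$, the plan is to exploit the fact that $|\cos|$ is piecewise monotonic between consecutive zeros $\pi/2+k\pi$. On any subinterval that avoids these zeros, $|\cos|$ attains its minimum at one of the two endpoints, so a lower bound on $|\cos(\phi)|$ over $J_m$ is simply $\min(|\cos(a)|,|\cos(b)|)$ for $J_m=[a,b]$. The two disjoint $t$-ranges in each bullet of the lemma are chosen exactly so that $J_m$ lies inside a single monotonic piece of $|\cos|$: the first range puts $J_m$ before the nearest zero of $\cos$ beyond the origin, while the second range puts $J_m$ just past that zero but before the next one. In the second range one has $|\cos(x)|=\cos(\pi-x)$ (with an appropriate shift by $k\pi$), which is the form in which the bounds are stated. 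The numerical thresholds $\pi/2$, $3\pi/4$, $\pi$, $\pi/(3-r)$, $3\pi/(2(3-r))$, $2\pi/(3-r)$, \ldots are precisely the values of $t$ at which an endpoint of $J_m$ coincides with a zero of $\cos$.

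The constant $1/\sqrt 2$ appearing in the $m=1$ bound is a small refinement: on $[-\pi/4,\pi/4]$ one has $\cos\phi\ge 1/\sqrt 2$ uniformly, so when one endpoint of $J_1$ lies in this interval, its contribution to the minimum may be replaced by the constant $1/\sqrt 2$. The main obstacle is purely combinatorial. For each of the eight sub-cases I will verify, by a direct comparison of the two endpoints $m\pi/4-t$ and $m\pi/4-(3-r)t/2$ to the nearest zero of $\cos$, which endpoint realises the minimum of $|\cos|$ on $J_m$, and then rewrite the answer in the shifted-cosine form of the statement using the identities $|\cos(x)|=\cos(\pi-x)$ on $[\pi/2,3\pi/2]$ and the evenness and $2\pi$-periodicity of $\cos$. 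None of the individual verifications is deep, but the bookkeeping across the four residue classes and the two $t$-regimes constitutes the real work.
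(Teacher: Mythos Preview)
Your proposal is correct and follows essentially the same route as the paper. The paper also starts from Lemma~\ref{gamma}, multiplies by $\lambda_2$ to obtain $\eta:=\lambda_1\gamma_1+\lambda_2\gamma_2-(\lambda_1-3\lambda_2)\pi/4\in[-t,-(3-r)t/2]$ (your $-t+\eta$), reduces $(\lambda_1-3\lambda_2)\pi/4$ modulo~$\pi$ according to the residue of $\lambda_1+\lambda_2$ mod~$4$, and then reads off $|\cos(m\pi/4+\eta)|$ on each of the eight ranges, using that $\cos$ is unimodal between consecutive zeros so that the minimum over $[a,b]$ is attained at an endpoint; the $1/\sqrt2$ in the $m=1$ case arises exactly as you describe.
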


\begin{proof} We deduce from Lemma \ref{gamma} the inequalities
$$0\le \lambda_1\gamma_1+\lambda_2\gamma_2-(\lambda_1-3\lambda_2)\frac{\pi}{4}+\lambda_2\frac{(r-1)^2}{4} \le \lambda_2\frac{(r-1)^3}{8}\,.$$
Let us define $\eta=\lambda_1\gamma_1+\lambda_2\gamma_2-(\lambda_1-3\lambda_2)\frac{\pi}{4} $
so that we  have
$$-\frac{(r-1)^2}{4}\lambda_2 \le \eta\le \frac{r-3}{2}\frac{(r-1)^2}{4}\lambda_2\le 0\,,$$
for $1\le r\le 3$. We then find
$$\vert \cos(\lambda_1\gamma_1+\lambda_2\gamma_2)\vert
=\begin{cases}
\cos\eta  &\text{if $\lambda_1+\lambda_2\equiv \modd{0} {4}$ and $-\frac{\pi}{2}\le \eta\le0$;}\\
\cos\left(\pi+\eta\right)  &\text{if $\lambda_1+\lambda_2\equiv \modd{0} {4}$ and $-\frac{3\pi}{2}\le \eta\le-\frac{\pi}{2}$;}\\
 \cos\left(\frac{\pi}{4}+\eta\right) &\text{if $\lambda_1+\lambda_2\equiv \modd{1} {4}$ and $-\frac{3\pi}{4}\le \eta\le0$;}\\
 \cos\left(\frac{5\pi}{4}+\eta\right) &\text{if $\lambda_1+\lambda_2\equiv \modd{1} {4}$ and $-\frac{7\pi}{4}\le \eta\le-\frac{3\pi}{4}$;}\\
 \cos\left(\frac{\pi}{2}+\eta\right) &\text{if $\lambda_1+\lambda_2\equiv \modd{2} {4}$ and $-\pi\le \eta\le0$;}\\
 \cos\left(\frac{3\pi}{2}+\eta\right) &\text{if $\lambda_1+\lambda_2\equiv \modd{2} {4}$ and  $-2\pi\le \eta\le-\pi$;}\\
 \cos\left(-\frac{\pi}{4}+\eta\right)  &\text{if $\lambda_1+\lambda_2\equiv \modd{3} {4}$ and $-\frac{\pi}{4}\le \eta\le0$;}\\
  \cos\left(\frac{3\pi}{4}+\eta\right)  &\text{if $\lambda_1+\lambda_2\equiv \modd{3} {4}$ and $-\frac{5\pi}{4}\le \eta\le-\frac{\pi}{4}$.}
\end{cases}$$
The lemma follows.
\end{proof}

\subsection{Proof of Theorem \ref{thr1}}

For the sake of contradiction, assume $I(\lambda_2)=0$ with $\lambda_1-\lambda_2\ge702$, by Proposition \ref{propr1}. Theorem \ref{thas1} then gives an upper bound for
$\vert \cos(\lambda_1\gamma_1+\lambda_2\gamma_2)\vert$ that may be smaller than the lower bound given in Lemma \ref{sin}. Because of the form of Theorem \ref{thas1} and Lemma  \ref{sin},
we need to distinguish several cases, according to the residue class of $\lambda_1+\lambda_2$ modulo $4$ and to the size of $\frac{(\lambda_1-\lambda_2)^2}{4\lambda_2}$.
When needed, we shall use the upper bound $r\le 1.03581$ and the estimate $\arccos x\ge \frac{\pi}{2}-\frac{x}{\sqrt{1-x^2}}$ for $0<x<1$.

\subsubsection{The case $\lambda_1+\lambda_2\equiv \modd{0} {4}$}
\begin{itemize}
\item For  $702\le \lambda_1-\lambda_2\le \log\lambda_2$, we obtain the bounds $\lambda_2\ge e^{702}$ and $\frac{(\lambda_1-\lambda_2)^2}{4\lambda_2}\le \frac{702^2}{4e^{702}}<\frac{\pi}{2}$.
We deduce from Theorem \ref{thas1} and Lemma \ref{sin} the contradiction:
$$0.9999\le \cos\left( \frac{702^2}{4e^{702}}\right)\le    \cos\left(\frac{(\lambda_1-\lambda_2)^2}{4\lambda_2}\right)\le  \vert \cos(\lambda(r\gamma_1+\gamma_2))\vert\le 0.0165\,.$$

\item For  $\max(702,\log\lambda_2) \le \lambda_1-\lambda_2\le \sqrt{2\pi\lambda_2}-1.0443$, 
we obtain the lower bound $\lambda_2\ge \left\lceil \frac{703.0443^2}{2\pi}\right\rceil=78666$. We deduce from Theorem \ref{thas1} and Lemma \ref{sin} the inequality
$\cos\left(\frac{(\lambda_1-\lambda_2)^2}{4\lambda_2}\right) \le \frac{1.30775}{\sqrt\lambda_2} $, and we get the contradiction
$$\frac{(\lambda_1-\lambda_2)^2}{4\lambda_2}\ge \arccos \frac{1.30775}{\sqrt\lambda_2}\ge \frac{\pi}{2}- \frac{1.30775}{\sqrt{\lambda_2-1.30775^2}}
> \left(\sqrt\frac{\pi}{2}- \frac{0.52215}{\sqrt\lambda_2} \right)^2\,.$$ 

\item For $\max(702,\sqrt{2\pi\lambda_2}+3.1407)\le  \lambda_1-\lambda_2\le \sqrt{3\pi\lambda_2}$, we obtain the lower bound $\lambda_2\ge \left\lceil \frac{702^2}{3\pi}\right\rceil=52289$.
We deduce from Theorem \ref{thas1} and Lemma \ref{sin} the inequality $\cos\left(\pi-\frac{3-r}{2}\frac{(r-1)^2}{4}\lambda_2\right)\le \frac{1.50929}{\sqrt\lambda_2} $, and we get the contradiction
$$\frac{\pi}{2}- \frac{1.50929}{\sqrt{\lambda_2-1.50929^2}} \le  \pi-\frac{3-r}{2}\frac{(r-1)^2}{4}\lambda_2
\le  \pi -\frac{\pi}{2}\left(1-\frac{\sqrt{3\pi}}{2\sqrt{\lambda_2}} \right)\left(1+\frac{3.1407}{\sqrt{2\pi\lambda_2}}  \right)^2\,.$$

\item For $\max(702,\sqrt{3\pi\lambda_2})\le  \lambda_1-\lambda_2\le \sqrt{\frac{16}{5-r}\pi\lambda_2}<\sqrt{12.68\lambda_2}$, we obtain the lower bound  $\lambda_2\ge \left\lceil \frac{702^2}{12.68}\right\rceil=38865$.
 We deduce from Theorem \ref{thas1} and Lemma \ref{sin} the inequalities $\cos\left(\pi-\frac{3-r}{2}\frac{(r-1)^2}{4}\lambda_2\right)\le \frac{1.85482}{\sqrt\lambda_2} <0.00941$, and we get the contradiction
 $$1.5613<\arccos(0.00941)<\pi-\frac{3-r}{2}\frac{(r-1)^2}{4}\lambda_2\le \pi-\frac{3-r}{8}3\pi\le 0.8276\,.$$
 
 \item For $\max\left(702, \sqrt{\frac{16}{5-r}\pi\lambda_2}\right)\le \lambda_1-\lambda_2\le\sqrt{6\pi\lambda_2}-0.9275$, we obtain the lower bound  $\lambda_2\ge \left\lceil \frac{702.9275^2}{6\pi}\right\rceil=26214$.
 We deduce from Theorem \ref{thas1} and Lemma \ref{sin} the inequality $\cos\left(\pi-\frac{(r-1)^2}{4}\lambda_2\right)\le \frac{2.01189}{\sqrt\lambda_2}$, and we get the contradiction
 $$\frac{\pi}{2}- \frac{2.01189}{\sqrt{\lambda_2-2.01189^2}} \le \lambda_2\frac{(r-1)^2}{4}-\pi\le
\frac{3\pi}{2}\left(1-\frac{0.9275}{\sqrt{6\pi\lambda_2}} \right)^2-\pi\,.$$
 \end{itemize}

  \subsubsection{The case $\lambda_1+\lambda_2\equiv \modd{1} {4}$}
  
  \begin{itemize}
  
\item  For  $703\le \lambda_1-\lambda_2\le \sqrt{2\pi\lambda_2}$, we obtain the bound $\lambda_2\ge \left\lceil \frac{703^2}{2\pi}\right\rceil=78656$.
We deduce from Theorem \ref{thas1} and Lemma \ref{sin} the contradiction:
$$\frac{1}{\sqrt2}\le \vert \cos(\lambda(r\gamma_1+\gamma_2))\vert\le\min\left(0.0165,\frac{1.30775}{\sqrt{\lambda_2}}\right)\,.$$

\item For  $\max(703,\sqrt{2\pi\lambda_2})\le \lambda_1-\lambda_2\le \sqrt{3\pi\lambda_2}-0.984$, we   obtain the bound $\lambda_2\ge \left\lceil \frac{703.984^2}{3\pi}\right\rceil=52585$.
The inequality $\cos\left(\frac{\pi}{4}-\frac{(r-1)^2}{4}\lambda_2 \right)\le \frac{1.50929}{\sqrt\lambda_2} $ follows from Theorem \ref{thas1} and Lemma \ref{sin}, and we get the contradiction
$$\frac{(\lambda_1-\lambda_2)^2}{4\lambda_2}\ge \frac{\pi}{4}+\arccos\left( \frac{1.50929}{\sqrt\lambda_2}\right)\ge \frac{3\pi}{4}- \frac{1.50929}{\sqrt{\lambda_2-1.50929^2}}
>\left(\sqrt\frac{3\pi}{4}- \frac{0.492}{\sqrt\lambda_2} \right)^2\,.$$

\item For $\max(703,\sqrt{3\pi\lambda_2}+3.8433)\le  \lambda_1-\lambda_2\le \sqrt{4\pi\lambda_2}$, we obtain the lower bound $\lambda_2\ge \left\lceil \frac{702^2}{4\pi}\right\rceil=39217$.
The inequality $\cos\left(\frac{5\pi}{4}-\frac{3-r}{2}\frac{(r-1)^2}{4}\lambda_2\right)\le \frac{1.68876}{\sqrt\lambda_2} $ follows from Theorem \ref{thas1} and Lemma \ref{sin} , and we get the contradiction
$$\frac{\pi}{2}- \frac{1.68876}{\sqrt{\lambda_2-1.68876^2}} \le  \frac{5\pi}{4}-\frac{3-r}{2}\frac{(r-1)^2}{4}\lambda_2
\le \frac{5\pi}{4} -\frac{3\pi}{4}\left(1-\frac{\sqrt{\pi}}{\sqrt{\lambda_2}} \right)\left(1+\frac{3.8433}{\sqrt{3\pi\lambda_2}}  \right)^2\,.$$

\item For $\max(703,\sqrt{4\pi\lambda_2})\le  \lambda_1-\lambda_2\le \sqrt{\frac{20}{5-r}\pi\lambda_2}<\sqrt{15.85\lambda_2}$, we obtain the lower bound  $\lambda_2\ge \left\lceil \frac{703^2}{15.85}\right\rceil=31181$.
 We deduce from Theorem \ref{thas1} and Lemma \ref{sin} the inequalities $\cos\left(\frac{5\pi}{4}-\frac{3-r}{2}\frac{(r-1)^2}{4}\lambda_2\right)\le \frac{2.01189}{\sqrt\lambda_2} <0.0114$, and we get the contradiction
 $$1.5593<\arccos(0.0114)<\frac{5\pi}{4}-\frac{3-r}{2}\frac{(r-1)^2}{4}\lambda_2\le \frac{5\pi}{4}-\frac{3-r}{8}4\pi< 0.8417\,.$$

\item  For $\max\left(703, \sqrt{\frac{20}{5-r}\pi\lambda_2}\right)\le \lambda_1-\lambda_2\le\sqrt{7\pi\lambda_2}-0.9231$, we obtain the lower bound  $\lambda_2\ge \left\lceil \frac{703.9231^2}{7\pi}\right\rceil=22533$.
We deduce from Theorem \ref{thas1} and Lemma \ref{sin} the inequality $\cos\left(\frac{5\pi}{4}-\frac{(r-1)^2}{4}\lambda_2\right)\le \frac{2.1626}{\sqrt\lambda_2}$, and we get the contradiction
$$\frac{\pi}{2}- \frac{2.1626}{\sqrt{\lambda_2-2.1626^2}} \le \frac{(r-1)^2}{4}\lambda_2-\frac{5\pi}{4}\le \frac{7\pi}{4}\left(1-\frac{0.9231}{\sqrt{7\pi\lambda_2}} \right)^2-\frac{5\pi}{4}\,.$$

\end{itemize}
  
  \subsubsection{The case $\lambda_1+\lambda_2\equiv \modd{2} {4}$}

  \begin{itemize}
  
  \item For $\max\left(702,2.0582\lambda_2^{1/4}\right)\le \lambda_1-\lambda_2\le \sqrt{\pi\lambda_2}$, we obtain the lower bound  $\lambda_2\ge \left\lceil \frac{702^2}{\pi}\right\rceil=156865$.
  The inequality $\cos\left(\frac{\pi}{2}-\frac{3-r}{2}\frac{(r-1)^2}{4}\lambda_2\right)\le \frac{1.05582}{\sqrt\lambda_2}$ follows from Theorem \ref{thas1} and Lemma \ref{sin} , and we get the contradiction
 $$ \frac{\pi}{2}- \frac{1.05882}{\sqrt{\lambda_2-1.05882^2}} \le   \frac{\pi}{2}-\frac{3-r}{2}\frac{(r-1)^2}{4}\lambda_2 \le  \frac{\pi}{2}- \left(1-\frac{2.0582}{2\lambda_2^{3/4}}\right)\frac{2.0582^2}{4\sqrt\lambda_2}\,.$$
  
  \item For $\max\left(702, \sqrt{\pi\lambda_2}\right)\le \lambda_1-\lambda_2\le \sqrt{\frac{8}{5-r}\pi\lambda_2}<\sqrt{6.34\lambda_2}$, we obtain the lower bound  $\lambda_2\ge \left\lceil \frac{702^2}{6.34}\right\rceil=77730$.
 We deduce from Theorem \ref{thas1} and Lemma \ref{sin} the inequalities $\cos\left(\frac{\pi}{2}-\frac{3-r}{2}\frac{(r-1)^2}{4}\lambda_2\right)\le \frac{1.50929}{\sqrt\lambda_2}<0.00542$, and we get the contradiction
 $$1.5653<\arccos(0.00542)<\frac{\pi}{2}-\frac{3-r}{2}\frac{(r-1)^2}{4}\lambda_2\le \frac{\pi}{2}-\frac{3-r}{8}\pi< 0.7995\,.$$
  
  \item For $\max\left(702,\sqrt{\frac{8}{5-r}\pi\lambda_2}\right) \le \lambda_1-\lambda_2\le  2\sqrt{\pi\lambda_2}-0.9535$, we obtain the lower bound  $\lambda_2\ge \left\lceil \frac{702.9535^2}{4\pi}\right\rceil=39323$.
 We deduce from Theorem \ref{thas1} and Lemma \ref{sin} the inequality $\cos\left(\frac{\pi}{2}-\frac{(r-1)^2}{4}\lambda_2\right)\le \frac{1.68876}{\sqrt\lambda_2}$, and we get the contradiction
 $$\frac{\pi}{2}- \frac{1.68876}{\sqrt{\lambda_2-1.68876^2}} \le \frac{(r-1)^2}{4}\lambda_2-\frac{\pi}{2}\le \pi\left(1-\frac{0.9535}{2\sqrt{\pi\lambda_2}} \right)^2-\frac{\pi}{2}\,.$$
  
  \item  For $\max\left(702,2\sqrt{\pi\lambda_2}+4.5938\right) \le \lambda_1-\lambda_2\le \sqrt{5\pi\lambda_2}$, we obtain the lower bound  $\lambda_2\ge \left\lceil \frac{702^2}{5\pi}\right\rceil=31373$.
We deduce from Theorem \ref{thas1} and Lemma \ref{sin} the inequality $\cos\left(\frac{3\pi}{2}-\frac{3-r}{2}\frac{(r-1)^2}{4}\lambda_2\right)\le \frac{1.85482}{\sqrt\lambda_2} $, and we get the contradiction
$$\frac{\pi}{2}- \frac{1.85482}{\sqrt{\lambda_2-1.85482^2}} \le  \frac{3\pi}{2}-\frac{3-r}{2}\frac{(r-1)^2}{4}\lambda_2
\le  \frac{3\pi}{2} -\pi\left(1-\frac{\sqrt{5\pi}}{2\sqrt{\lambda_2}} \right)\left(1+\frac{4.5938}{2\sqrt{\pi\lambda_2}}  \right)^2\,.$$

 \item For $\sqrt{5\pi\lambda_2}  \le \lambda_1-\lambda_2\le \sqrt{\frac{24}{5-r}\pi\lambda_2}<\sqrt{19.02\lambda_2}$, we obtain the lower bound  $\lambda_2\ge \left\lceil \frac{702^2}{19.02}\right\rceil=25910$.
 The inequalities $\cos\left(\frac{3\pi}{2}-\frac{3-r}{2}\frac{(r-1)^2}{4}\lambda_2\right)\le \frac{2.1626}{\sqrt\lambda_2}<0.01344$ follow from Theorem \ref{thas1} and Lemma \ref{sin}, and we get the contradiction
$$1.5573<\arccos(0.01344)<\frac{3\pi}{2}-\frac{3-r}{2}\frac{(r-1)^2}{4}\lambda_2\le \frac{3\pi}{2}-\frac{3-r}{8}5\pi< 0.8558\,.$$

\item For $\max\left(702,  \sqrt{\frac{24}{5-r}\pi\lambda_2}\right) \le \lambda_1-\lambda_2\le 2\sqrt{2\pi\lambda_2}-0.9218$, we obtain the lower bound  $\lambda_2\ge \left\lceil \frac{702.9218^2}{8\pi}\right\rceil=19660$.
We deduce from Theorem \ref{thas1} and Lemma \ref{sin} the inequality $\cos\left(\frac{3\pi}{2}-\frac{(r-1)^2}{4}\lambda_2\right)\le \frac{2.30865}{\sqrt\lambda_2}$, and we get the contradiction
$$\frac{\pi}{2}- \frac{2.30865}{\sqrt{\lambda_2-2.30865^2}} \le \frac{(r-1)^2}{4}\lambda_2-\frac{3\pi}{2}\le 2\pi\left(1-\frac{0.9218}{2\sqrt{2\pi\lambda_2}} \right)^2-\frac{3\pi}{2}\,.$$

  \end{itemize}
  
  \subsubsection{The case $\lambda_1+\lambda_2\equiv \modd{3} {4}$}
  
  \begin{itemize}
  
  \item For  $703\le \lambda_1-\lambda_2\le \log\lambda_2$, we still find $ \frac{(\lambda_1-\lambda_2)^2}{4\lambda_2}\le \frac{703^2}{4e^{703}}<\frac{\pi}{2}$ and we get the contradiction 
  $ \cos\left(\frac{\pi}{4}+\frac{703^2}{4e^{703}}\right)  \le 0.0165$.
  
  \item For  $\max(703,\log\lambda_2)\le \lambda_1-\lambda_2\le \sqrt{\pi\lambda_2}-1.1958$, we obtain the bound $\lambda_2\ge \left\lceil \frac{704.1958^2}{\pi}\right\rceil=157848$.
  The inequality $\cos\left(\frac{\pi}{4}+\frac{(r-1)^2}{4}\lambda_2 \right)\le \frac{1.05882}{\sqrt\lambda_2} $ follows from Theorem \ref{thas1} and Lemma \ref{sin}, and we get the contradiction
  $$\frac{(\lambda_1-\lambda_2)^2}{4\lambda_2}\ge \arccos\left( \frac{1.05882}{\sqrt\lambda_2}\right)-\frac{\pi}{4}\ge \frac{\pi}{4}- \frac{1.05882}{\sqrt{\lambda_2-1.05882^2}}
>\left(\sqrt\frac{\pi}{4}-\frac{0.5979}{\sqrt\lambda_2}\right)^2\,.$$

\item For $\max(703,\sqrt{\pi\lambda_2}+2.5913) \le \lambda_1-\lambda_2\le \sqrt{2\pi\lambda_2}$, we obtain the bound $\lambda_2\ge \left\lceil \frac{703^2}{2\pi}\right\rceil=78656$.
The inequality $\cos\left(\frac{3\pi}{4}-\frac{3-r}{2}\frac{(r-1)^2}{4}\lambda_2\right)\le \frac{1.30775}{\sqrt\lambda_2} $ follows from Theorem \ref{thas1} and Lemma \ref{sin} , and we get the contradiction
$$\frac{\pi}{2}- \frac{1.30775}{\sqrt{\lambda_2-1.30775^2}} \le  \frac{3\pi}{4}-\frac{3-r}{2}\frac{(r-1)^2}{4}\lambda_2
\le \frac{3\pi}{4} -\frac{\pi}{4}\left(1-\frac{\sqrt{\pi}}{\sqrt{2\lambda_2}} \right)\left(1+\frac{2.5913}{\sqrt{\pi\lambda_2}}  \right)^2\,.$$

\item For $\max(703,\sqrt{2\pi\lambda_2})\le  \lambda_1-\lambda_2\le \sqrt{\frac{12}{5-r}\pi\lambda_2}<\sqrt{9.51\lambda_2}$, we obtain the lower bound  $\lambda_2\ge \left\lceil \frac{703^2}{9.51}\right\rceil=51968$.
 We deduce from Theorem \ref{thas1} and Lemma \ref{sin} the inequalities $\cos\left(\frac{3\pi}{4}-\frac{3-r}{2}\frac{(r-1)^2}{4}\lambda_2\right)\le \frac{1.68876}{\sqrt\lambda_2} <0.00741$, and we get the contradiction
 $$1.5633<\arccos(0.00741)<\frac{3\pi}{4}-\frac{3-r}{2}\frac{(r-1)^2}{4}\lambda_2\le \frac{3\pi}{4}-\frac{3-r}{8}2\pi< 0.8136\,.$$
 
 \item For $\max\left( 703,\sqrt{\frac{12}{5-r}\pi\lambda_2}\right) \le  \lambda_1-\lambda_2\le \sqrt{5\pi\lambda_2}-0.9367$, we obtain the bound $\lambda_2\ge \left\lceil \frac{703.9367^2}{5\pi}\right\rceil=31547$.
 We deduce from Theorem \ref{thas1} and Lemma \ref{sin} the inequality $\cos\left(\frac{3\pi}{4}-\frac{(r-1)^2}{4}\lambda_2\right)\le \frac{1.85482}{\sqrt\lambda_2}$, and we get the contradiction
$$\frac{\pi}{2}- \frac{1.85482}{\sqrt{\lambda_2-1.85482^2}} \le \frac{(r-1)^2}{4}\lambda_2-\frac{3\pi}{4}\le \frac{5\pi}{4}\left(1-\frac{0.9367}{\sqrt{5\pi\lambda_2}} \right)^2-\frac{3\pi}{4}\,.$$
 
   \end{itemize}

\section{Concluding remarks}

In the introduction, we discussed the irreducibility of $C_{X,\lambda_2}$, and noticed how different are the cases $\lambda_2$ even and $\lambda_2$ odd.
We checked both cases for $\lambda_2\le 240$, but we can go further in the even case. Using {\tt Maple} during 35101 seconds, we showed that  $C_{X,\lambda_2}$
is irreducible over $\Q$ when $\lambda_2\le 600$ is even. This motivates the following conjecture.

\begin{conjecture} For $\lambda_2\ge2$ even, the polynomial $C_{X,\lambda_2}$ is irreducible over $\Q$. For $\lambda_2\ge3$ odd, the polynomial $C_{X,\lambda_2}$ is the product of
$X-\lambda_2$ by an irreducible polynomial over $\Q$. 
\end{conjecture}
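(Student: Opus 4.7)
My plan would be to attack this irreducibility conjecture by combining reduction modulo primes, Newton-polygon analysis, and the hypergeometric structure of the polynomial. First I would separate the two parities: when $\lambda_2$ is odd, the identity $C_{\lambda,\lambda}(-1,1)=0$ for odd $\lambda$ recalled in the introduction gives $(X-\lambda_2)\mid C_{X,\lambda_2}(-1,1)$ immediately, so after dividing out this factor both parities reduce to proving that a single explicit polynomial, of degree $\lambda_2$ or $\lambda_2-1$, is irreducible over $\Q$.

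Next I would expand
$$C_{X,\lambda_2}(-1,1)=\sum_{j=0}^{\lambda_2}(-1)^j\binom{\lambda_2}{j}\binom{X}{j}$$
in the monomial basis $X^k$, so that, after multiplying through by $\lambda_2!$, the polynomial has integer coefficients and leading coefficient $\pm 1$; the coefficient of $X^k$ becomes a signed sum over $j$ of $\binom{\lambda_2}{j}\frac{\lambda_2!}{j!}$ times a Stirling number of the first kind. For a well-chosen prime $p$---my first candidate would be a prime $p\in(\lambda_2/2,\lambda_2]$ provided by Bertrand's postulate, because then $v_p\bigl(\binom{\lambda_2}{j}\bigr)$ is a simple step function of $j$ which one can control---I would compute the Newton polygon at $p$ and try to show it is a single segment whose slope has denominator equal to the degree. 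This would force irreducibility.

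Failing a clean single-prime Newton polygon, a complementary route is to exploit the hypergeometric representation $C_{X,\lambda_2}(-1,1)={}_2F_1(-\lambda_2,-X;1;-1)$, which identifies the polynomial as a specialization of a Meixner-type family, and to look for analogues of Schur's classical irreducibility theorems for Laguerre and Hermite polynomials. A further lever is Galois-theoretic: by locating several primes whose reductions modulo $p$ split into prescribed cycle types and invoking Chebotarev density, one tries to force the Galois group of the splitting field to be all of $S_{\lambda_2}$, which in particular implies irreducibility. The odd case, where the quotient by $(X-\lambda_2)$ must be analyzed separately, would be handled by applying the same tools to that quotient after computing its coefficients via synthetic division.

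\textbf{Main obstacle.} The heart of the difficulty is that the $p$-adic valuations of the coefficients are governed jointly by $\binom{\lambda_2}{j}$, controlled cleanly by Kummer's theorem, and by Stirling numbers of the first kind, whose $p$-adic distribution is far less transparent. Outside the exponential and Laguerre-type families for which Schur's method applies, no uniform Newton-polygon recipe is known for polynomials of hypergeometric type with specialized parameters; I therefore expect that a complete proof of the conjecture will require a new structural input on the $p$-adic behaviour of Stirling numbers, or on the Newton polygons of truncated Gaussian hypergeometric series, beyond what is currently available. This also explains the empirical cost gap the author observed: factoring out $(X-\lambda_2)$ in the odd case destroys the clean hypergeometric normalization and inflates the relevant $p$-adic data, which is exactly where any proof strategy will be most fragile.
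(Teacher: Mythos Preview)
The statement you are addressing is a \emph{conjecture}, and the paper offers no proof of it; the author merely reports computational verification for $\lambda_2\le 240$ (both parities) and $\lambda_2\le 600$ (even case) and then states the conjecture. There is therefore no ``paper's own proof'' against which to measure your attempt.

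Your write-up is itself not a proof but a research plan, and you say so plainly: you identify candidate tools (Newton polygons at a Bertrand prime, Schur-type arguments for hypergeometric families, Chebotarev/Galois considerations) and then concede in your ``Main obstacle'' paragraph that none of them is known to close the argument, because the $p$-adic structure of the Stirling-number contributions is not under control. That concession is accurate and is precisely why the statement remains a conjecture. The one concrete step that is fully justified is the observation that $(X-\lambda_2)\mid C_{X,\lambda_2}(-1,1)$ for odd $\lambda_2$, which follows from $C_{\lambda_2,\lambda_2}(-1,1)=0$; everything beyond that is heuristic.

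In short: there is no gap to name beyond the one you already name yourself, and there is no paper proof to compare to. What you have written is a reasonable outline of where one might look, not a proof, and the conjecture remains open.
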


We used the same technics, together with hypergeometric transformations, to study the case $\lambda_1-\lambda_2$ small and to prove Proposition \ref{propr1}.
We introduced the four families of polynomials and checked their irreducibility over $\Q$ for $l\le 350$. It is quite likely that this property holds for any larger value of $l$.

\begin{conjecture} For $l\ge3$, the polynomials $\widetilde C_{l,0,0}(X)$, $\widetilde C_{l,0,1}(X)$, $\widetilde C_{l,1,0}(X)$ and $\widetilde C_{l,1,1}(X)$ are irreducible over $\Q$.
\end{conjecture}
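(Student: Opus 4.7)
The plan is to attack the conjecture through a combination of hypergeometric representation, $p$-adic Newton polygon analysis, and complex-analytic root separation. First, I would rewrite each of the four families $\widetilde C_{l,0,0}(X)$, $\widetilde C_{l,0,1}(X)$, $\widetilde C_{l,1,0}(X)$, $\widetilde C_{l,1,1}(X)$ as a terminating hypergeometric series of type ${}_3F_2$. The ratios $\frac{(k+l)!}{(k+j)!}\frac{k!}{(k-j)!}$ appearing in the defining sums are standard Pochhammer symbols, so the conversion is mechanical and puts the four families into a uniform framework. This also makes available the contiguous relations and three-term recurrences from the theory of hypergeometric polynomials, which seem essential for any inductive argument on $l$.

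Second, I would study the Newton polygon at the prime $p=2$. The excerpt's leading-term analysis already shows that the leading coefficient is $\pm 2^l$ (or $\pm 2^{l-1}$ times a simple factor, in the $(1,0)$ case), so $v_2$ of the top coefficient is large. If one can prove that the polygon has a single slope, or slopes whose denominators force any $\mathbb{Q}$-factor to have degree equal to $l$, then irreducibility at that prime follows. Concretely, I would compute $v_2$ of the coefficient of $X^{l-m}$ for each $0\le m\le l$ using Kummer-type formulas for binomial coefficients and the expansions of $(1\pm i)^{2l}$ and $(1\pm i)^{2l+1}$ that already appear in the excerpt's leading-term computation. A clean slope pattern, possibly depending on the residue of $l$ modulo $4$, would handle many (perhaps all) residue classes.

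Third, as a backup and to cover cases missed by the $2$-adic analysis, I would try a complex-analytic separation of roots. Theorems \ref{thas+} and \ref{thas-} describe where $C_{\lambda_1,\lambda_2}(-1,1)$ is small as $\lambda_2\to\infty$; translated through the reduction used to define $\widetilde C_l$, this should pin down the limiting distribution of the zeros of $\widetilde C_l(X)$ on a specific smooth curve in $\mathbb{C}$. If that curve has no nontrivial automorphism commuting with complex conjugation, one can rule out factorizations of any given degree for $l$ sufficiently large. Combined with the Maple verification up to $l=350$, this would in principle close the conjecture, provided the effective threshold can be pushed below $350$.

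The main obstacle is the same one that has left open the irreducibility of the Legendre polynomials $P_n(X)$ for infinitely many $n$: irreducibility for an infinite family of polynomials without Galois-theoretic structure is historically very hard, and ad hoc $p$-adic arguments usually handle only specific residue classes of $l$. I therefore expect the Newton polygon step to succeed in roughly half or three-quarters of residue classes modulo a small modulus, and the analytic step to be effective only for very large $l$. Bridging these gaps will likely require a genuinely new identity — perhaps a contiguous relation linking $\widetilde C_{l}$, $\widetilde C_{l-1}$, $\widetilde C_{l-2}$ whose discriminant one can control explicitly — turning irreducibility into an inductive statement.
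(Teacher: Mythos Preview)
The statement you are addressing is a \emph{conjecture}, not a theorem: the paper does not prove it and offers only computational evidence (irreducibility verified with \texttt{Maple} for $l\le 350$). There is therefore no ``paper's own proof'' to compare against.

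Your proposal is, by your own admission, not a proof but a research programme. You explicitly say you ``expect the Newton polygon step to succeed in roughly half or three-quarters of residue classes'' and that the analytic step would be ``effective only for very large $l$'', with the bridging step requiring ``a genuinely new identity'' you do not possess. These are not minor technicalities to be filled in: the central obstacle you identify --- that irreducibility for an infinite family of explicit polynomials without Galois structure is notoriously hard, as with Legendre polynomials --- is exactly the reason the paper states this as a conjecture rather than a result. Nothing in your plan circumvents that obstacle; the hypergeometric rewriting and contiguous relations are natural first moves but do not by themselves produce irreducibility, and the complex-analytic root-location argument cannot rule out factorizations (knowing where the roots lie says nothing about which subsets of them have rational elementary symmetric functions). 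In short, what you have written is a reasonable outline of how one might begin to attack the conjecture, but it is not a proof and should not be presented as one.
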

  
 Let us now discuss the results obtained in Theorem \ref{thr1}. The first thing we noticed is that the case $\lambda_1+\lambda_2\equiv \modd{2} {4}$ differs from the other cases. It would be nice to be nice to deal with the interval $702\le \lambda_1-\lambda_2\le 2.0582\lambda_2^{1/4}$ for any $\lambda_2$, to fill the initial gap. Secondly we chose to reach the second explicit intervals with no solutions. How far could we go with this method? It would be nice to get improvements that enable to break the $\sqrt\lambda_2$ barrier and to go up to $\lambda_2^{1/2+\epsilon}$ for some $\epsilon>0$. 
 
 \section{Acknowledgments}
 
 The author wishes to thank the anonymous referee for substantially helping to improve the presentation of this paper in many ways.

\bigskip
\hrule
\bigskip

\noindent 2010 {\it Mathematics Subject Classification}:
Primary 11B65, Secondary 05A10 11B83.

\noindent \emph{Keywords: }
binomial sums, asymptotics

\bigskip
\hrule
\bigskip

\end{document}